\documentclass[12pt]{amsart}

\usepackage{amssymb,amsfonts,amsmath}
\usepackage[all,arc]{xy}
\usepackage{enumerate}
\usepackage{mathrsfs}
\usepackage{fullpage}
\usepackage{xspace}
\usepackage[margin=1.0in]{geometry}
\usepackage{tcolorbox}
\usepackage{tikz-cd}
\usepackage{color}
\usepackage{aliascnt}
\usepackage[foot]{amsaddr}
\usepackage{hyperref}
\usepackage{mathtools}
\usepackage{pdflscape}
\usepackage{booktabs}
\usepackage{makecell}

\newtheorem{thm}{Theorem}[section]

\newaliascnt{theo}{thm}

\aliascntresetthe{theo}

\newaliascnt{cor}{thm}
\newtheorem{cor}[cor]{Corollary}
\aliascntresetthe{cor}

\newaliascnt{prop}{thm}
\newtheorem{prop}[prop]{Proposition}
\aliascntresetthe{prop}

\newaliascnt{lem}{thm}
\newtheorem{lem}[lem]{Lemma}
\aliascntresetthe{lem}

\newaliascnt{conj}{thm}

\aliascntresetthe{conj}

\newaliascnt{que}{thm}

\aliascntresetthe{que}

\newaliascnt{ass}{thm}

\aliascntresetthe{ass}

\newaliascnt{defn}{thm}
\newtheorem{defn}[defn]{Definition}
\aliascntresetthe{defn}

\theoremstyle{remark}
\newaliascnt{rem}{thm}
\newtheorem{rem}[rem]{Remark}
\aliascntresetthe{rem}

\theoremstyle{definition}

\newtheorem{set}[thm]{Setting}

\newtheorem{mainthm}{Theorem}

\newtheorem{maincorollary}{Corollary}

\newtheorem{mainexample}{Example}

\newcommand{\C}{\mathbb{C}\xspace}

\newcommand{\Z}{\mathbb{Z}\xspace}
\newcommand{\Q}{\mathbb{Q}\xspace}
\newcommand{\G}{\mathbb{G}\xspace}
\newcommand{\A}{\mathbb{A}\xspace}
\newcommand{\F}{\mathbb{F}\xspace}

\newcommand{\ch}{\textup{CH}}

\newcommand{\et}{\text{ét}}
\newcommand{\into}{\hookrightarrow}
\newcommand{\Br}{\text{Br}}
\DeclareMathOperator{\Spec}{Spec}

\DeclareMathOperator{\dv}{div}
\DeclareMathOperator{\alb}{alb}

\DeclareMathOperator{\Cor}{Cor}

\DeclareMathOperator{\Hom}{Hom}
\DeclareMathOperator{\Pic}{Pic}
\DeclareMathOperator{\Gal}{Gal}

\DeclareMathOperator{\Alb}{Alb}

\makeatletter
\let\c@equation\c@theo
\makeatother
\numberwithin{equation}{section}

\title{Zero-cycles and Brauer groups on certain Enriques surfaces}
\author{Thomas Jaklitsch}
\begin{document}
	\maketitle
	\begin{abstract}
        In this paper, we study the Chow group of zero-cycles $\ch_0(S)$ for certain Enriques quotients $S$ of the Kummer surface associated to the product of two geometrically non-isogenous elliptic curves $E_1$ and $E_2$. We show that over fields of characteristic different from $2$ or $3$, the degree-zero subgroup $A_0(S) \subset \ch_0(S)$ is 16-torsion. Over p-adic fields, we compute the transcendental Brauer group $\Br(S)/\Br_1(S)$. Using the Brauer-Manin pairing, we construct examples of Enriques surfaces for which $A_0(S)$ attains nontrivial 2-torsion elements arising from push-forward from the abelian surface $E_1\times E_2$. 
    \end{abstract}
	\section{Introduction}
    Let $Y$ be a smooth, projective, geometrically integral variety over a field $k$. The Chow group of zero-cycles admits a degree map
    \[
        \deg: \ch_0(Y) \to \Z
    \]
    which is defined on the class of a closed point $[y]$ by $\deg([y]) := [k(y):k]$. We denote the kernel of the degree map by $A_0(Y)$. If $Y(k)\neq \emptyset$, then there exists a homomorphism known as the Albanese map
    \[
        \alb_Y: A_0(Y) \to \Alb_Y(k)
    \]
    where $\Alb_Y$ denotes the Albanese variety of $Y$. Let us denote the kernel by $T(Y)$. In the case of curves, $\alb_Y$ is the Abel-Jacobi map, so it is an isomorphism. However, it is well known that for higher dimensions the Albanese kernel need not be trivial and can be quite large in general. For example, Mumford showed \cite{Mumford} that if $Y$ is a surface over $\C$ with geometric genus $p_g(Y) > 0$, then the kernel of the Albanese map is infinite dimensional, and it is not even representable. However, it is expected that this behavior should only hold in the case $p_g(Y) >0$. More precisely, Bloch conjectured \cite[Conjecture 0.4]{Bloch-BlochsConj} that if $Y$ is a smooth projective surface of geometric genus $0$, then the Albanese kernel $T(Y) = 0$. It is a theorem of Bloch, Kas, and Lieberman \cite{BlochKasLieberman} that the conjecture is true for surfaces not of general type. That is, they prove that if $Y$ is a smooth projective surface over $\C$ such that $p_g(Y) = 0$ and the Kodaira dimension $\kappa(Y) < 2$, then $T(Y) = 0$.\footnote{Bloch's conjecture has also been verified for many surfaces of general type (see for example \cite{Voisin} and \cite{PedriniWeibel}), and OpenAI has announced a proof of the full conjecture.} This theorem implies that for a smooth projective surface $X$ over a field of characteristic $0$, $T(Y)$ is torsion. A natural question then, is what can this group look like over fields of arithmetic interest and can this group be nontrivial. We note that this question has been studied for some classes of surfaces with $p_g = 0$ and not of general type. See \cite{Gazaki} for the case of bielliptic surfaces and \cite{C-RamachandranHogadi} for Severi-Brauer flag varieties (where the case of higher dimensions is also considered).
    
    In this paper, we study the Chow group of zero-cycles for a specific class of Enriques surfaces which we will refer to as Enriques surfaces of Lieberman type. An Enriques surface $S$ is a smooth projective surface with both geometric genus and irregularity equal to $0$ and nontrivial canonical divisor $K_S$ which satisfies $2K_S \sim 0$. Over fields of characteristic not equal to $2$, Enriques surfaces arise as quotients of K3 surfaces by fixed-point-free involutions. Since the irregularity of an Enriques surface is $0$, $\Alb_S$ is trivial, and therefore the degree $0$ subgroup coincides with the Albanese kernel. It then follows by the above that over $\C$, the degree map $\deg: \ch_0(S) \to \Z$ is an isomorphism. Our goal is to study the torsion subgroup $A_0(S)$ for non algebraically closed fields. First, we give an upper bound on the torsion of this group over an arbitrary field of characteristic not equal to $2$ or $3$. Second, we construct examples of Enriques surfaces over p-adic fields for which $A_0(S)$ possesses interesting nontrivial classes arising from pushforward from an abelian surface. 

    \subsection{Main Results}

    Let $A:= E_1\times E_2$ denote the product of two elliptic curves over a field $k$ which are geometrically non-isogenous and have rational 2-torsion. We consider the associated Kummer K3 surface $X := \text{Kum}(A)$. Given 2-torsion points $P \in E_1[2]$ and $Q \in E_2[2]$ which are not equal to the identity, we define the involution 
    \[
        \sigma_{P,Q}: A \to A
    \]
    by $\sigma_{P,Q}(x,y) = (-x+P, y+Q)$. This induces an involution $\tilde{\sigma}_{P,Q}$ on $X$ and an Enriques surface of Lieberman type is a quotient of $X$ by an involution of this form (see Section \ref{sec: EnSurfPre}).  We denote the rational map $f: A \dashrightarrow S$. Our first result is determining an upper-bound on the torsion of these groups over a general field.
    
    \begin{mainthm}[Theorem \ref{thm: ChUpperBound}]
        Let $k$ be a field of characteristic different than 2 or 3 and let $S$ be an Enriques surface of Lieberman type over $k$. Then the kernel of the degree map $A_0(S)$ is $16$-torsion.
    \end{mainthm}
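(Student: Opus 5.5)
The approach is to move zero-cycles along the correspondences $A \dashrightarrow X \to S$ and exploit how $-1$ acts on zero-cycles of an abelian surface. Write $\pi: X \to S$ for the étale double cover and $\varphi: \widetilde{A} \to X$ for the degree-two quotient map, where $\widetilde{A}$ is the blow-up of $A$ at the sixteen $2$-torsion points; all of these are rational because the $2$-torsion of $E_1$ and $E_2$ is rational. The plan has three steps, carried out in order.

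\emph{Step 1: pass to the K3 cover.} Since $\pi$ has degree $2$, we have $\pi_*\pi^* = 2$ on $\ch_0(S)$. Hence it suffices to show that $\pi_*$ kills $8A_0(X)$, or more simply that $8A_0(X)$ lies in the subgroup generated by a class that $\pi_*$ sends to something of degree zero and controlled. We could also aim at $A_0(X)$ itself being $8$-torsion up to a fixed class. Concretely, I would instead prove the following: every $z \in A_0(S)$ satisfies $2z = \pi_*(w)$ with $w = \pi^* z \in A_0(X)$, and $8w = 0$ in $\ch_0(X)$, so $16z = \pi_*(8w) = 0$.

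\emph{Step 2: reduce $A_0(X)$ to the abelian surface.} Here $\varphi_*\varphi^* = 2$ on $\ch_0(X)$, and $\ch_0(\widetilde{A}) = \ch_0(A)$. So for $w \in A_0(X)$ we have $2w = \varphi_*(u)$ with $u \in A_0(A)$ invariant under $[-1]$. The Bloch/Beauville filtration on $A_0(A)$ gives $F^1 = A_0(A)$, with $F^1/F^2 \cong A(k)$ via summation and $F^2 = T(A)$, which is generated by the Pontryagin products $([a]-[0])*([b]-[0])$. The key relation is $[-1]_* = -1$ on $F^1/F^2$ and $[-1]_* = +1$ on $F^2$, but over a non-closed field these hold only up to torsion, so I would use explicit identities instead. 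For a $[-1]$-invariant class $u$, its image in $A(k)$ is $s$ with $s = -s$, so $s$ is $2$-torsion; since $A[2]$ is rational, the piece $[s]-[0]$ is pushed to a difference of the rational points of $X$ coming from exceptional curves, and those differences are zero in $\ch_0(X)$ because the exceptional curves are $\mathbb{P}^1$'s meeting $\varphi$-images of the curves $E_1\times\{t\}$ and $\{t\}\times E_2$. For the $F^2$ part, $\varphi_*$ of $([a]-[0])*([b]-[0])$ equals $\varphi_*$ of $([-a]-[0])*([-b]-[0])$, and combining these yields $2\varphi_*(\text{symmetric part})$.

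\emph{Step 3: the obstacle.} The main obstacle is showing that the $F^2$-contribution dies after pushforward to $X$ and then $S$ with a uniform exponent. I would first try the product structure. For $A = E_1\times E_2$, the group $T(A)$ is generated by the classes $([a_1]-[0])\times([b_2]-[0])$ for $a_1 \in E_1$ and $b_2 \in E_2$ (after norms from finite extensions), and the involution $\sigma_{P,Q}$ acts on such a class by $x\mapsto -x+P$ and $y \mapsto y+Q$. Translation by the rational $2$-torsion point $Q$ acts trivially on $F^1/F^2$-level quotients up to $2$, while $-1$ acts trivially on $F^2$. Comparing $\varphi_*$ of the class with $\varphi_*$ of its image under $\sigma_{P,Q}$, which agree after $\pi_*$, gives $\pi_*\varphi_*(c) = \pi_*\varphi_*((-1,1)_*c)$, and $(-1,1)_*c = -c$ on product classes via bilinearity. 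This forces $2\pi_*\varphi_*(c)$ to become a multiple of a $2$-torsion relation, which costs another factor of $2$. Collecting the factors of $2$ (from $\pi$, from $\varphi$, from the symmetric/antisymmetric decomposition, and from translation by $Q$) gives $16$. The characteristic hypothesis enters through the Kummer construction (char $\neq 2$) and presumably through the bilinearity identities for Pontryagin products, which need the divisibility of $2$ and $3$ in low-degree relations (char $\neq 3$). I expect the careful bookkeeping of norms from residue fields in Step 3 to be the real work.
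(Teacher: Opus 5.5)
\textbf{Gap.} Step 3 carries all the content, and it is not carried out. Phrases such as ``acts trivially \dots\ up to $2$'' and ``a multiple of a $2$-torsion relation, which costs another factor of $2$'' do not prove anything. Assigning one factor of $2$ each to $\pi$, $\varphi$, symmetrization and translation is numerology, not a derivation of $16$.

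There is also a mismatch with Step 1. Your target there is $8\pi^*z=0$ in $\ch_0(X)$. But the only mechanism in Step 3, comparing $c$ with $\sigma_{P,Q*}c$, gives relations only after applying $\pi_*$, i.e.\ in $\ch_0(S)$. The $T(A)$-contribution genuinely does not die in $X$: since $\iota_*=1$ on $T(A)$, one has $\varphi^*\varphi_*=1+\iota_*=2$ on $T(A)$, so $\varphi_*T(A)$ is not torsion in general (e.g.\ over $\C$). For the same reason, your suggestion that $A_0(X)$ is $8$-torsion up to a fixed class is false for a K3 surface. Your remark that char $\neq 3$ enters through Pontryagin identities is also unsupported.

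\textbf{Missing idea.} You need an exact identity on $T(A)$, not one ``up to torsion''. For $a\in E_1(L)$ and $b\in E_2(L)$, consider the symbol $\{a,b\}=[(a,b)]-[(a,0)]-[(0,b)]+[(0,0)]$. It equals $(i_b-i_0)_*([a]-[0])$, where $i_b:E_{1,L}\to A_L$ is $x\mapsto(x,b)$. Hence it is bilinear by Abel's theorem on $E_{1,L}$, and symmetrically on $E_{2,L}$. The norms $N_{L/k}\{a,b\}$ generate $T(A)$.

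Expanding with bilinearity gives exactly $\sigma_{P,Q*}\{a,b\}=\{-a,b\}=-\{a,b\}$, because translation by the rational point $(P,Q)$ acts trivially on symbols. So $\sigma_{P,Q*}=-1$ on $T(A)$. Put $f=\pi\circ\varphi:\tilde A\to S$ and let $\tilde\sigma$ be the lift of $\sigma_{P,Q}$ to $\tilde A$. Since $f\circ\tilde\sigma=f$, you get $2f_*T(A)=0$.

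With this your outline closes. For $z\in A_0(S)$ we have $4z=f_*f^*z$. The class $f^*z$ is $\iota$-invariant, so its Albanese image lies in $A[2](k)$, and that part dies by your chain of rational curves. The remainder lies in $T$, so $4z$ is $2$-torsion and $8z=0$, which is even better than $16$.

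The paper avoids this computation altogether. It uses the bielliptic surface $W=A/\sigma_{P,Q}$ and the degree-$2$ morphism $\tilde W\to S$ from the blow-up of $W$ at the image of $A[2]$. It then cites Gazaki's theorem that $T(W)$ is $8$-torsion, giving $2\cdot 8=16$.
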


    \subsection*{Results over p-adic fields}
    We now specialize to the case when $k$ is a finite extension of $\Q_p$ for $p > 2$. We further assume that there exists a number field $F \subset k$ such that $E_1$ and $E_2$ are defined over $F$. More precisely, we assume that there exist elliptic curves $\tilde{E}_1, \tilde{E}_2$ defined over $F$ which are geometrically non-isogenous and have rational 2-torsion such that $E_1 \cong \tilde{E}_1\times_F k$ and $E_2 \cong \tilde{E}_2\times_F k$. Our main goal is to compute the group $A_0(S)$ for the Enriques surface $S$ under these assumptions. In the case of good reduction, it follows easily by the geometric class field theory of Kato and Saito \cite{KatoSaito} that $A_0(S) \cong \Z/2\Z$ (see Section \ref{sec: GoodRed}). Therefore, we study the case when both elliptic curves have split multiplicative reduction. In order to do this, the main tool we make use of is the Brauer-Manin pairing.

    Following work of Lichtenbaum \cite{Lichtenbaum} on curves, Colliot-Thélène \cite[pg 53]{Colliot-Thelene} defined for a smooth projective variety $Y$ over a p-adic field $k$, a functorial pairing 
    \[
        \ch_0(Y) \times \Br(Y) \to \Q/\Z.
    \]
    Moreover, for $n > 0$ this induces a pairing 
    \[
        A_0(Y)/n \times \Br(Y)[n]/\Br(k)[n] \to \Q/\Z.
    \]
    Our strategy to show that $A_0(S) \neq 0$ is to show that the homomorphism
    \[
        A_0(S)/2 \to \Hom(\Br(S)[2]/\Br(k)[2], \Q/\Z)
    \]
    induced by the Brauer-Manin pairing is nontrivial. In fact, we use this to construct examples where the pushforward $f_*T(A)$ is nontrivial in $A_0(S)$.

    A key step in this direction is to compute the transcendental Brauer group 
    of $S$, $\Br(S)/\Br_1(S)$. For an Enriques surface of Lieberman type, we compute the transcendental Brauer group \footnote{We believe that this computation may be known to experts.} as well as the image of the composition
    \[
        \Br(S)/\Br_1(S) \to \Br(X)[2]/\Br_1(X)[2] \to \Br(A)[2]/\Br_1(A)[2]
    \]
    (See Theorem \ref{thm: CalcBrS}). The exact structure of $\Br(S)/\Br_1(S)$ only depends upon whether certain constants arising from the coefficients of the elliptic curves are squares in the p-adic field $k$. This result relies heavily on the work of Skorobogatov and Zarhin \cite{SkorobogatovZarhin} who study the Brauer group of $X$ and its relationship to the Brauer group of $A$. 
    We then use our computation of the transcendental Brauer group of the Enriques surface, p-adic uniformization of elliptic curves, and related work of Yamazaki \cite{Yamazaki} on the Brauer-Manin pairing of the abelian surface to prove the following theorem.
    
    \begin{mainthm}[Theorem \ref{thm: MainResult}] \label{thm: MainResultIntro}
        There exists an Enriques quotient of Lieberman type $S$ of the Kummer surface $X$ such that
        \[
            f_*T(A)/2 \to \Hom(\Br(S)[2]/\Br(k)[2], \Q/\Z)
        \]
        is nontrivial if and only if the map 
        \[
            \Br(X)[2]/\Br_1(X)[2] \to \Br(A)[2]/\Br_1(A)[2]
        \]
        is an isomorphism.
    \end{mainthm}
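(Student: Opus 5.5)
The plan is to reduce the Brauer--Manin pairing on $S$ to the pairing on $A$ by functoriality, and then to use Yamazaki's computation of the pairing on $A$. Write $\pi: X \to S$ for the étale double cover. Write $A \dashrightarrow X$ for the quotient by $-1$, resolved by the blow-up $\tilde{A} \to A$ at $A[2]$, so that $f$ is the composition $\tilde A \to X \to S$. Functoriality of the pairing gives
\[
\langle f_* z, \beta\rangle_S = \langle z, f^*\beta\rangle_{\tilde A}
\]
for $z \in A_0(A)$ and $\beta \in \Br(S)$. Since $\Br$ and $A_0$ are birational invariants, we may replace $\tilde A$ by $A$. For $z\in T(A)$, Yamazaki's results show that algebraic classes (those in $\Br_1(A)$) pair trivially with $T(A)$ in the split multiplicative case. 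Thus the pairing $f_*T(A)/2 \times \Br(S)[2] \to \Q/\Z$ factors through the image of
\[
\Br(S)[2] \to \Br(S)/\Br_1(S) \to \Br(X)[2]/\Br_1(X)[2] \to \Br(A)[2]/\Br_1(A)[2],
\]
which Theorem \ref{thm: CalcBrS} describes explicitly.

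The key steps, in order, are as follows.

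First, I would establish the factorization just described. This includes checking that $f^*$ kills $\Br_1(S)$ modulo classes orthogonal to $T(A)$, and that $\Br(k)$ pairs trivially with degree-zero cycles.

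Second, I would invoke Theorem \ref{thm: CalcBrS} to identify the image of $\Br(S)/\Br_1(S)$ in $\Br(A)[2]/\Br_1(A)[2]$. This image depends on whether the constants attached to $E_1,E_2$, and to the choice of $(P,Q)$, are squares in $k$.

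Third, I would use Yamazaki's theorem. Via Tate uniformization $E_i = \G_m/q_i^{\Z}$, it shows that the pairing $T(A)/2 \times \Br(A)[2]/\Br_1(A)[2] \to \Q/\Z$ is nondegenerate on the transcendental part. Concretely, the transcendental classes come from cup products of Kummer classes in $H^1(E_1,\mu_2)\otimes H^1(E_2,\mu_2)$ built from $q_1,q_2$ and the uniformizing parameters. Their values on symbols $\{x,y\}$ in $T(A)$ are computed by Hilbert symbols, i.e.\ tame symbols.

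Fourth, I would combine these. The map $f_*T(A)/2 \to \Hom(\Br(S)[2]/\Br(k)[2],\Q/\Z)$ is nontrivial if and only if some $\beta\in\Br(S)$ has $f^*\beta$ nonzero in $\Br(A)[2]/\Br_1(A)[2]$ and not orthogonal to $T(A)$.

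For the direction "$\Leftarrow$": if $\Br(X)[2]/\Br_1(X)[2] \to \Br(A)[2]/\Br_1(A)[2]$ is an isomorphism, then the squareness conditions in Theorem \ref{thm: CalcBrS} let us choose $P,Q$ so that $\Br(S)/\Br_1(S)$ surjects onto a nonzero subgroup of $\Br(A)[2]/\Br_1(A)[2]$. Yamazaki's nondegeneracy then produces an explicit $z\in T(A)$, e.g.\ a symbol of points with suitable valuations, with $\langle z, f^*\beta\rangle \neq 0$.

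For the direction "$\Rightarrow$": if the map is not an isomorphism, then by Skorobogatov--Zarhin and Theorem \ref{thm: CalcBrS} the image of every $\Br(S)/\Br_1(S)$ in $\Br(A)[2]/\Br_1(A)[2]$ is zero. This should hold for every choice of $(P,Q)$, so the pairing vanishes identically on $f_*T(A)$.

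I expect the main obstacle to be the explicit comparison in the fourth step. One must match the classes in $\Br(S)$ produced by Theorem \ref{thm: CalcBrS}, which are described via the Hochschild--Serre spectral sequence for $\pi$ and invariants under $\tilde\sigma_{P,Q}$, with Yamazaki's explicit symbols on $A$. One must then verify nonvanishing through a Hilbert symbol computation in terms of $q_1,q_2$. To do this, I would first express the pullback of the $\sigma_{P,Q}$-invariant class as an explicit quaternion algebra $(u_1,u_2)$, with $u_i$ the functions $x_i - e_i$ on $E_i$. I would then evaluate it on $[(a,b)]-[(a,0)]-[(0,b)]+[0]$ using the Tate parametrization.
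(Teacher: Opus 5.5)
\textbf{There is a genuine gap: your third step rests on a nondegeneracy claim that is false, and your ``$\Rightarrow$'' direction rests on a false assertion about transcendental Brauer groups.}

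Your reduction is the same as the paper's: functoriality of the pairing, then the vanishing of $\Br_1(A)$ on $T(A)$. The error is that Yamazaki's result is not a nondegeneracy statement. In the split multiplicative case, $\Br(A)[2]/\Br_1(A)[2]\cong\Hom(E_1[2],E_2[2])$ has order $16$. Its quotient by the right kernel of the pairing with $T(A)/2$ is only $\Hom(\{\pm1\},\Z/2\Z)\cong\Z/2\Z$. This quotient map restricts to the toric subgroup $\{\pm 1\}\subset E_1[2]$ and projects $E_2[2]\to\Z/2\Z$.

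Combine this with the explicit identification $\mathcal{A}_{\mu,\eta}\mapsto\varphi_{\mu,\eta}$ of Lemma~\ref{lem: SZCanonicalIso}; note that $\varphi_{\mu,\eta}$ kills $(\mu,0)$ and has image $\{O_{E_2},(\eta,0)\}$. The result is the criterion of Lemma~\ref{lem: CondsForNontrivPairing}. $S_{P,Q}$ pairs nontrivially if and only if $\mathcal{A}_{\mu,\eta}$ lies in the image of $\Br(X)[2]/\Br_1(X)[2]$ \emph{and} $P\neq P_0$, $Q\neq Q_0$. Here $P_0,Q_0$ are the $2$-torsion points corresponding to $-1$ under Tate uniformization.

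Your plan never produces this second condition. With ``nondegeneracy'' you would conclude that every $S_{P,Q}$ with nonzero transcendental Brauer group pairs nontrivially, which fails for $P=P_0$. For ``$\Leftarrow$'' this is repairable. When the map is an isomorphism, all nine $\mathcal{A}_{\mu,\eta}$ lie in the image, and you must then choose $P\neq P_0$ and $Q\neq Q_0$.

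The ``$\Rightarrow$'' direction is not repairable as written. Suppose $\Br(X)[2]/\Br_1(X)[2]\to\Br(A)[2]/\Br_1(A)[2]$ is not an isomorphism. It does not follow that $\Br(S_{P,Q})/\Br_1(S_{P,Q})$ vanishes for every $(P,Q)$. The image is $\ker\Phi$, which can be a nonzero proper subgroup. Here is an example.

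\begin{itemize}
\item Normalize so that $v_{\mathfrak p}(a),v_{\mathfrak p}(a')>0$ and $b,b'$ are units.
\item Split multiplicative reduction makes $-b,-b'$ squares. Then $a-b\equiv -b$ is a square as well.
\item If $-1$ is a square, then $\Phi(\mathcal{A}_{b,0})=\Phi(\mathcal{A}_{0,0})\Phi(\mathcal{A}_{a,0})\equiv(a',a',a',a')$.
\item Take $a'$ a square and $a$ a nonsquare. Then $\Br(S_{(b,0),(0,0)})/\Br_1(S_{(b,0),(0,0)})\cong\Z/2\Z$, while $\Phi\neq 0$.
\end{itemize}

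The theorem survives only because $(b,0)=P_0$.

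So the proof needs an ingredient your plan lacks. The admissible pairs, those with $\mu\neq b$ and $\eta\neq b'$, are exactly the four generators whose $\Phi$-values are the rows of the $4\times4$ matrix from Lemma~\ref{lem:BrClassesDescendingToKum}. The reduction arithmetic then settles each case:
\begin{itemize}
\item If $-1$ is a square, one all-square row forces $a$ and $a'$, and hence every entry, to be squares.
\item If $-1$ is not a square, no row can consist of squares. Split according to whether $aa'$ is a square. Use that $b,b'$ are nonsquare units, and that $a(a-b)$ being a square forces $a$ to be a square.
\item In that second case the map is never an isomorphism, since one of $\pm aa'$ is a nonsquare.
\end{itemize}
Evaluating $(x-\mu,u-\eta)$ on Tate-parametrized cycles, as you propose, would rediscover the degeneracy of the pairing. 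It cannot replace this case analysis.
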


    Moreover, the Enriques surfaces that have this property can be explicitly identified. This theorem gives rise to the following corollary.
    
    \begin{maincorollary}[Corollary \ref{cor: FiniteExt}]
        There exists a finite extension $L/k$ and an Enriques quotient $S$ of $X$ such that $A_0(S_L)$ is nontrivial. 
    \end{maincorollary}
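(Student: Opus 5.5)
The plan is to deduce the corollary from Theorem \ref{thm: MainResultIntro}. After base change, that theorem will produce a nonzero element of $A_0(S)/2$, and hence a nonzero element of $A_0(S)$. So it suffices to find a finite extension $L/k$ over which the hypothesis of Theorem \ref{thm: MainResultIntro} holds, namely that
\[
\Br(X_L)[2]/\Br_1(X_L)[2] \to \Br(A_L)[2]/\Br_1(A_L)[2]
\]
is an isomorphism.

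First I would check that everything is stable under finite base change. The curves $E_i\times_k L$ remain geometrically non-isogenous, keep rational $2$-torsion, and are still defined over the number field $F\subset k\subset L$. Passing to an unramified quadratic extension if necessary, the reduction stays split multiplicative. The involution $\sigma_{P,Q}$ is defined over $k$, so $S_L$ is again an Enriques quotient of Lieberman type of $X_L=\mathrm{Kum}(A_L)$. Hence Theorem \ref{thm: MainResultIntro}, and Theorem \ref{thm: CalcBrS} which computes $\Br(S)/\Br_1(S)$ and its image in $\Br(A)[2]/\Br_1(A)[2]$, both apply over $L$.

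Second, I would use the explicit description from Theorem \ref{thm: CalcBrS} and the work of Skorobogatov--Zarhin. The map $\Br(X)[2]/\Br_1(X)[2]\to \Br(A)[2]/\Br_1(A)[2]$ is always injective. Its cokernel is controlled by whether certain constants are squares in the base field; these constants are built from differences of the $2$-torsion $x$-coordinates $e_i-e_j$ of the two Weierstrass models. I would therefore take $L=k(\sqrt{c_1},\dots,\sqrt{c_m})$ obtained by adjoining square roots of all these finitely many constants. Over $L$ every relevant constant is a square, so the cokernel vanishes and the map is an isomorphism.

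Finally, Theorem \ref{thm: MainResultIntro} over $L$ gives an Enriques quotient $S$ with $f_*T(A_L)/2\to \Hom(\Br(S_L)[2]/\Br(L)[2],\Q/\Z)$ nontrivial. Since $f_*T(A_L)\subset A_0(S_L)$, the group $A_0(S_L)/2$, and hence $A_0(S_L)$, is nonzero.

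I expect the main obstacle to be the second step: making sure that the square-class conditions which force the map to be an isomorphism are exactly the ones that can be achieved by a finite extension. One must also check that adjoining these square roots does not destroy the split multiplicative reduction type or the other hypotheses used by Theorem \ref{thm: MainResult}. If some condition is instead a non-square requirement, I would handle it by choosing $L$ unramified, or by choosing different points $P,Q$ so that the required constants differ by non-squares.
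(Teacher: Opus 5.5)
Your proposal is correct and follows the paper's proof: adjoin square roots of the finitely many constants appearing in $\Phi$ from Lemma \ref{lem:BrClassesDescendingToKum}, so that $\Br(X_L)[2]/\Br_1(X_L)[2]\to\Br(A_L)[2]/\Br_1(A_L)[2]$ becomes an isomorphism, then apply Theorem \ref{thm: MainResult} over $L$ to get a nontrivial pairing on $f_*T(A_L)/2$, hence $A_0(S_L)\neq 0$. The hedging in your last paragraph is unnecessary, for two reasons: split multiplicative reduction (via the Tate uniformization) persists over any finite extension, and the only condition Theorem \ref{thm: MainResult} needs is this all-squares condition, with no non-square requirement (it also forces $-1$ to be a square in $L$, consistent with the excluded $-1$-nonsquare case).
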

    Moreover, the finite extension $L/k$ and the Enriques surface $S$ can be made explicit in the above corollary. By the work of Skorobogatov and Zarhin \cite{SkorobogatovZarhin}, the condition in Theorem \ref{thm: MainResultIntro} is easy to computationally verify in many cases. Thus, it can be used to find explicit examples of Enriques surfaces whose Chow group has nontrivial degree-zero subgroup over p-adic fields.
    
    \begin{mainexample}[Section \ref{sec: Example}]
        Consider the elliptic curves
        \[
            E_1: y^2 + xy + y = x^3 + x^2 - 565x - 5353 \text{ (LMFDB 390.f3)} 
        \]
        \[
            E_2: y^2 + xy= x^3 -474x +2619 \text{ (LMFDB 429.b3)}
        \]
        over $\Q_{13}$. These curves are geometrically non-isogenous, have rational 2-torsion, and have split multiplicative reduction mod 13. Also, $\Br(X)[2]/\Br_1(X)[2] \to \Br(A)[2]/\Br_1(A)[2]$ is an isomorphism. Therefore, there exists an Enriques quotient $S$ of Lieberman type with the property that $A_0(S) \neq 0$. We can also identify which involutions of $A$ give rise to the Enriques surfaces $S$ with this property.
    \end{mainexample} 

    Theorem \ref{thm: MainResultIntro} can also be used to show that certain Enriques surfaces of the kind appearing in the theorem have a transcendental Brauer-Manin obstruction to weak approximation for rational points. To our knowledge, this is the second known example of a transcendental Brauer-Manin obstruction to weak approximation for an Enriques surface. The first is found in \cite{CreutzViray}. We also note that it was known that weak approximation need not hold for Enriques surfaces by \cite{HarariSkorobogatov} using a different method. This application is included in section \ref{sec: Example}.
    
    \begin{rem}
    The main results of this paper give explicit information about the Chow group of specific Enriques surfaces over local fields, which we hope to use in the future to study local-to-global principles for zero-cycles on these surfaces. In particular, we would like to study Colliot-Thélène's Conjecture (E) \cite{Colliot-TheleneSansuc}, \cite{KatoSaitoConjE}, \cite{Colliot-Thelene} for Enriques surfaces. This conjecture says that for a smooth, projective, geometrically integral variety $Y$ over a number field, the complex
    \[
        \varprojlim_n \ch_0(Y)/n \to \varprojlim_n \ch_{0, \A}(Y)/n \to \Hom(\Br(Y), \Q/\Z)
    \]
    is exact. This form of the conjecture is due to \cite{vanHamel} and \cite{Wittenberg}. A key input to construct the nontrivial zero-cycles coming from the abelian surface in this paper is the injectivity of the map 
    \[
        \Br(S)/\Br_1(S) \to \Br(X)/\Br_1(X).
    \]
    We note that Enriques surfaces with this property have given counterexamples where the Brauer group cannot explain the failure of Weak Approximation for rational points \cite{HarariSkorobogatov}. This makes the study of the zero-cycles counterpart very intriguing. 
    \end{rem}
    
    \subsection{Notation and Conventions}

    Varieties will always be assumed to be geometrically integral. In particular, a surface will always mean a geometrically integral variety of dimension 2. For an irreducible scheme $Y$, let $Y^{(1)}$ denote the set of points of codimension $1$. That is the set $\{y \in Y| \overline{\{y\}} \text{ has codimension $1$ in $Y$}\}$. For an abelian group $B$ we denote the n-torsion subgroup by $B[n]$. For an abelian variety $A$, we denote the $n$-torsion subscheme by $A[n]$. If $Y$ is a variety over $k$ and $L/k$ is a field extension, we denote the base change to $L$ by $Y_L := Y\times_{\Spec k} \Spec L$. In the case that $L=\overline{k}$ is a fixed algebraic closure, we denote the base change to $\overline{k}$ by $\overline{Y} := Y_{\overline{k}}$ . 

    \subsection*{Acknowledgements}
    I would like to thank my advisor, Professor Evangelia Gazaki, for introducing me to this problem and for her constant support and guidance on this project. I would also like to thank Professor Olivier Wittenberg for his time and for explaining to me the application appearing in Section 5. 
    
	\section{Background on Kummer and Enriques Surfaces and Preliminary Results} \label{sec: Background}
	
	\subsection{Chow Groups}
	Here we recall some important facts we will need about the Chow group of zero-cycles of a smooth projective variety. For more information see \cite{Fulton}.
	
	Let $k$ be a field and $Y$ a smooth projective variety over $k$ such that $Y(k)\ne \emptyset$. Recall that we have a filtration 
    \[
        T(Y) \subset A_0(Y) \subset \ch_0(Y)
    \]
	where $A_0(Y)$ is the kernel of the degree map and $T(Y)$ is the kernel of the Albanese homomorphism $\alb_Y$. 
	
    If $f: Y \to Z$ is a proper morphism, then there is a functorial push-forward map 
	\[
		f_*: \ch_0(Y) \to \ch_0(Z).
	\]
	Moreover, $f_*$ restricts to a map between degree zero subgroups and Albanese kernels. That is, $f_*(A_0(Y)) \subset A_0(Z)$ and $f_*(T(Y)) \subset T(Z)$  (see \cite{SpiessSzamuely} pg 5). 
	If $f: Y \to Z$ is flat, then there is a functorial pullback
	\[
		f^*: \ch^i (Z) \to \ch^i(Y).
	\]
	In particular, if $f: Y\to Z$ is a finite flat map between smooth projective varieties, then there is a pullback 
	\[
		f^*: \ch_0(Z) \to \ch_0(Y).
	\]
	In this case $f^*$ restricts to a map between degree zero subgroups and Albanese kernels. That is, $f^*(A_0(Z)) \subset A_0(Y)$ and $f^*(T(Z)) \subset T(Y)$ (see \cite{SpiessSzamuely} pg 5, \cite{GazakiRathore} pg 25). Another important fact that we will need is that a birational morphism of surfaces induces an isomorphism on $A_0$:
	
	\begin{prop} \label{prop:ChBirInv}\cite[Proposition 6.3]{Colliot-TheleneCoray}
	    Let $Y \to Z$ be a birational morphism of smooth projective surfaces over $k$. Then 
		\[
			f_* : A_0(Y) \to A_0(Z)
		\]
		is an isomorphism. 
	\end{prop}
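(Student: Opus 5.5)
The plan is to reduce the statement to a single blow-up at a closed point, using that $f$ is a composition of blow-ups at closed points. The reduction rests on the factorization theorem for birational morphisms of smooth projective surfaces, which holds over any field: $f$ factors as
\[
Y = Y_n \to Y_{n-1} \to \cdots \to Y_0 = Z,
\]
where each step is the blow-up of a closed point (not necessarily rational). Since pushforward is functorial, and a composition of isomorphisms is an isomorphism, it suffices to treat the case $f=\pi: Y=\mathrm{Bl}_z Z \to Z$ with $z$ a closed point.

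For a single blow-up, surjectivity of $\pi_*$ on $A_0$ is easy. Given a zero-cycle on $Z$, the moving lemma for zero-cycles on smooth quasi-projective varieties moves it, up to rational equivalence, to a cycle supported away from $z$. Such a cycle lifts isomorphically to $Y\setminus E$, where $E=\pi^{-1}(z)$ is the exceptional divisor. Degrees are preserved, so $A_0(Y)\to A_0(Z)$ is surjective.

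For injectivity, I would use the localization sequences for $E\subset Y$ and $\{z\}\subset Z$, together with the identification $Y\setminus E\cong Z\setminus z$. This gives a commutative diagram with exact rows
\[
\ch_0(E)\to \ch_0(Y)\to \ch_0(Y\setminus E)\to 0, \qquad \ch_0(z)\to\ch_0(Z)\to \ch_0(Z\setminus z)\to 0.
\]
The right vertical map is an isomorphism, and $E\cong \mathbb{P}^1_{k(z)}$, so $\ch_0(E)\cong \Z$, generated by the class of a $k(z)$-rational point of $E$, of degree $[k(z):k]$. This map sends its generator to $[z]$, of the same degree $[k(z):k]$. A diagram chase then shows that the kernel of $\ch_0(Y)\to\ch_0(Z)$ consists of images of elements $\alpha\in\ch_0(E)$ whose image in $\ch_0(Z)$ vanishes.

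The main obstacle is to show that such elements are already zero in $\ch_0(Y)$. This does not follow from the diagram alone, since $[z]$ may be torsion or even zero in $\ch_0(Z)$. I would handle it using degrees. Because $\ch_0(E)\cong\Z$ embeds via degree, any $\alpha$ with $\pi_*\alpha=0$ satisfies $\deg\alpha=\deg \pi_*\alpha=0$, hence $\alpha=0$ in $\ch_0(E)$. More carefully, the chase shows the following: if $\pi_*\beta=0$ for $\beta\in A_0(Y)$, then $\beta$ restricts to zero on $Y\setminus E$. So $\beta$ is the image of some $\alpha\in\ch_0(E)$ with $\deg \alpha=\deg\beta=0$, which forces $\alpha=0$ and hence $\beta=0$. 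This uses that the degree of $\beta$, computed on $Y$, equals the degree of $\alpha$ computed on $E$.

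It remains to make sure the degree-zero condition, rather than merely $\pi_*\beta=0$, is what gives injectivity. This holds since $\beta\in A_0(Y)$ by hypothesis, which explains why the statement is about $A_0$ and not $\ch_0$. Alternatively, I would cite this argument directly as in \cite[Proposition 6.3]{Colliot-TheleneCoray}.
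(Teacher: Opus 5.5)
Your proof is correct. The paper gives no proof of its own and simply cites Colliot-Th\'el\`ene--Coray; your argument is the standard one behind that result: factor $f$ into blow-ups of closed points, then for a single blow-up chase the localization sequences using $\ch_0(E)\cong\ch_0(\mathbb{P}^1_{k(z)})\cong\Z$ via degree.

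Two of your side remarks are wrong, though neither affects the argument. First, $[z]$ can never be torsion or zero in $\ch_0(Z)$, since $\deg[z]=[k(z):k]>0$. Second, $\pi_*\beta=0$ already forces $\deg\beta=0$, so your chase actually shows that $\pi_*$ is an isomorphism on all of $\ch_0$, not only on $A_0$.
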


    \subsection{Kummer and Enriques Surfaces}
    In this section, we state some information about the relevant surfaces that appear in this paper. Throughout, $k$ is a field with characteristic different from 2 or 3. First, let us recall the classical birational invariants of surfaces which will be important in what follows.  

    Let $Y$ be a smooth projective surface over $k$ and let $K_Y$ denote the canonical divisor. 
    \begin{defn}
        The geometric genus of $Y$ is $p_g(Y) := h^2(Y, \mathcal{O}_Y)$. The irregularity of $Y$ is $q(Y) := h^1(Y, \mathcal{O}_Y)$. The Kodaira dimension $\kappa(Y)$ is defined to be 
        \[
            \kappa(Y) := \inf\left\{n \in \Z : \frac{h^0(Y,mK_Y) }{m^n}\text{ is bounded}\right\}
        \]
        where we take $\kappa(Y) = -\infty$ if $h^0(Y,mK_Y) = 0$ for all $m$.
    \end{defn}

    Now let us introduce the surfaces we will be interested in. 

    \begin{defn}
        Let $A$ be an abelian surface over $k$. The Kummer surface $\text{Kum}(A)$ associated to $A$ is defined as follows. Let $\iota: A \to A$ be the involution $\iota(x) = -x$. The quotient $A/\iota$ is singular at the 2-torsion points of $A$ (not necessarily defined over $k$) which are the fixed points of the action. Then $\text{Kum}(A)$ is defined as the minimal resolution of $A/\iota$ which by \cite[Section 2.4]{Kollar} is obtained by blowing up the singular closed points of $A/\iota$. 
    \end{defn}

    The Kummer surface $X:= \text{Kum}(A)$ is a K3 surface over $k$ \cite[Example 1.3 (iii)]{Huybrechts}. In particular, it has birational invariants $p_g(X) = 1$, $q(X) = 0$, and $\kappa(X) = 0$ as $K_X \sim 0$. Another fact that will be relevant later is that the function field $k(X) = k(A)^\iota$, the fixed points of the function field of $A$ by the action of $\iota$.

    \begin{defn}
        An Enriques surface $S$ over $k$ is a smooth projective geometrically integral surface such that $p_g(S) = q(S) = 0$ and $2K_S \sim 0$, but $K_S\not\sim 0$. 
    \end{defn}
    Note that since $2K_S \sim 0$, Enriques surfaces have Kodaira dimension $\kappa(S) = 0$. 

    \begin{prop}\label{prop: K3QuotGivesEn}
        Let $X$ be a K3 surface, and $\sigma: X \to X$ a fixed point free involution on $X$. Then $S := X/\sigma$ is an Enriques surface. 
    \end{prop}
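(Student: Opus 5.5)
Let $\pi: X \to S$ be the quotient map. Since $\sigma$ acts freely and $\mathrm{char}(k) \neq 2$, the group $\langle \sigma\rangle \cong \Z/2$ acts freely on the smooth projective surface $X$. So the quotient $S = X/\sigma$ exists as a smooth projective surface: it is projective because $X$ is, so every orbit lies in an affine open set. The map $\pi$ is finite étale of degree $2$. Geometric integrality of $S$ follows from that of $X$. The plan is to compute $\chi(\mathcal{O}_S)$, $q(S)$, $p_g(S)$ and $K_S$ from the corresponding data of $X$, using that $\pi$ is étale.

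\textbf{Step 1: the canonical class.} Since $\pi$ is étale, $\pi^*K_S = K_X \sim 0$. Also $\pi_*\mathcal{O}_X = \mathcal{O}_S \oplus L$, where $L$ is the $(-1)$-eigensheaf. $L$ is a line bundle with $L^{\otimes 2} \cong \mathcal{O}_S$ and $L \not\cong \mathcal{O}_S$; the latter holds because $X$ is connected, so $h^0(\mathcal{O}_X) = 1$. Since the étale double cover is nontrivial, $\ker(\pi^*: \Pic S \to \Pic X)$ equals $\{0, L\}$. As $\pi^*\omega_S \cong \omega_X \cong \mathcal{O}_X$, we get $\omega_S \in \{\mathcal{O}_S, L\}$. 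In either case $2K_S \sim 0$.

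\textbf{Step 2: Euler characteristic and irregularity.} Étale covers multiply the Euler characteristic, so $\chi(\mathcal{O}_X) = 2\chi(\mathcal{O}_S)$. For a K3 surface $\chi(\mathcal{O}_X) = 2$, hence $\chi(\mathcal{O}_S) = 1$. We have $H^1(S,\mathcal{O}_S) \hookrightarrow H^1(X,\mathcal{O}_X) = 0$, because it is a direct summand via $\pi_*\mathcal{O}_X = \mathcal{O}_S \oplus L$. Thus $q(S) = 0$, and $p_g(S) = \chi(\mathcal{O}_S) - 1 + q(S) = 0$.

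\textbf{Step 3: $K_S \not\sim 0$.} This is the main obstacle, and it is where the argument could go wrong. If $\omega_S \cong \mathcal{O}_S$, then Serre duality gives $p_g(S) = h^0(\omega_S) = 1$, contradicting $p_g(S) = 0$. So $\omega_S \cong L \not\cong \mathcal{O}_S$, meaning $K_S \not\sim 0$. This completes the verification of the definition of an Enriques surface.

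If one prefers to argue geometrically, all statements can be checked after base change to $\overline{k}$, since cohomology dimensions are invariant under flat base change. Alternatively, one could simply cite the classical treatments of Enriques surfaces (e.g.\ Cossec--Dolgachev, or Beauville's book) for this standard fact. The only care needed in positive characteristic $\neq 2$ is that the quotient by a free $\Z/2$-action is étale. This holds because the order of the group is invertible in $k$ and the action is free.
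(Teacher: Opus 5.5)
Your proof is correct and follows the standard route. The paper writes out no argument of its own and only points to Cossec--Dolgachev--Liedtke, Proposition 1.3.3; your verification via the étale double cover $\pi$, the splitting $\pi_*\mathcal{O}_X=\mathcal{O}_S\oplus L$, $\chi(\mathcal{O}_X)=2\chi(\mathcal{O}_S)$, $q(S)=0$ as a direct summand, and $K_S\not\sim 0$ forced by $p_g(S)=0$ is the standard argument behind that reference, made self-contained.

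The one loosely justified step is $\ker(\pi^*)=\{0,L\}$: nontriviality of the cover only shows $L\not\cong\mathcal{O}_S$, and that nothing else lies in the kernel comes from Hochschild--Serre with $H^1(\Z/2\Z,k^\times)=\{\pm 1\}$. You do not actually need this step, since $\omega_S^{\otimes 2}\cong\mathrm{Nm}_\pi(\pi^*\omega_S)\cong\mathrm{Nm}_\pi(\mathcal{O}_X)\cong\mathcal{O}_S$ already gives $2K_S\sim 0$.
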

    \begin{proof}
        See the argument of \cite[Proposition 1.3.3]{CossecDolgachevLiedtke}.
    \end{proof}

    The last class of surfaces that will be relevant to us are bielliptic surfaces.

    \begin{defn}
        A bielliptic surface is the quotient of the product of two elliptic curves $E_1\times E_2$ by a finite group $G$ which acts on $E_2$ by translations and on $E_1$ such that $E_1/G \cong \mathbb{P}^1$. 
    \end{defn}

    Bielliptic surfaces $W$ have birational invariants $\kappa(W) = 0$, $q(W) = 1$, and $p_g(W) = 0$ \cite[Theorem VI.13]{BeauvilleComplexSurfaces}. 
	
	\subsection{Enriques Surfaces of Lieberman Type}\label{sec: EnSurfPre}
	
	Let us recall the following notation from the introduction. Let $k$ be a field of charactertistic not equal to $2$ or $3$ and let $E_1$ and $E_2$ be elliptic curves with rational 2-torsion. We let $A := E_1\times E_2$ be the abelian surface given by the product of the two elliptic curves and $X := \text{Kum}(A)$ denote the Kummer surface associated to $A$ as defined above. Fix 2-torsion points $O_{E_1} \neq P \in E_1[2]$ and $O_{E_2}\neq Q \in E_2[2]$.  Then we define an involution
	\[
	\sigma_{P,Q}: E_1\times E_2 \to E_1\times E_2
	\]
    of the abelian surface by
	\[
	\sigma_{P,Q}(X,Y) := (-X + P, Y + Q).
	\]
    
    \begin{lem}\label{lem: KumInv}
        The involution $\sigma_{P,Q}: A \to A$ induces a free involution $\tilde{\sigma}_{P,Q}$ on the Kummer surface $X$. 
    \end{lem}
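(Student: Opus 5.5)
First I will check that $\sigma_{P,Q}$ commutes with $\iota$: $\sigma_{P,Q}(\iota(x,y)) = (x+P, -y+Q)$ and $\iota(\sigma_{P,Q}(x,y)) = (x-P, -y-Q)$. These agree because $P$ and $Q$ are 2-torsion, so $-P=P$ and $-Q=Q$. Hence $\sigma_{P,Q}$ descends to an involution $\bar\sigma$ of $A/\iota$. Moreover, $\sigma_{P,Q}$ maps the fixed locus $A[2]$ of $\iota$ to itself, since $-a+P\in E_1[2]$ when $a\in E_1[2]$, and similarly in the second factor. So $\bar\sigma$ permutes the singular points of $A/\iota$. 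The permutation is defined over $k$ because $\sigma_{P,Q}$ is. By the universal property of blowing up (or uniqueness of the minimal resolution), $\bar\sigma$ lifts to an automorphism $\tilde\sigma_{P,Q}$ of $X=\text{Kum}(A)$. Its square lifts $\bar\sigma^2=\mathrm{id}$ and so agrees with the identity on a dense open set, hence everywhere since $X$ is separated. So $\tilde\sigma_{P,Q}$ is an involution, and it is nontrivial because $\sigma_{P,Q}\neq \pm\mathrm{id}$.

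For freeness I will work over $\overline{k}$, where fixed points can be detected. I split into two cases: points of $X$ off the exceptional divisors, and points on them.

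\textbf{Points off the exceptional divisors.} Such a point is the image of a pair $\{a,-a\}$ with $a=(x,y)\notin A[2]$. It is fixed by $\tilde\sigma_{P,Q}$ exactly when $\sigma_{P,Q}(a)=a$ or $\sigma_{P,Q}(a)=-a$.
\begin{itemize}
\item The first condition forces $y+Q=y$, i.e.\ $Q=O$, which is impossible.
\item The second forces $-x+P=-x$, i.e.\ $P=O$, also impossible.
\end{itemize}
So there are no fixed points here.

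\textbf{Points on the exceptional divisors.} The singular point coming from $t=(t_1,t_2)\in A[2]$ is sent to the one coming from $(-t_1+P,\,t_2+Q)=(t_1+P,\,t_2+Q)$. This differs from $t$ because $Q\neq O$. Thus $\bar\sigma$ moves every singular point, so $\tilde\sigma_{P,Q}$ maps each exceptional curve $C_t$ onto a different, disjoint curve $C_{t+(P,Q)}$. In particular it has no fixed point on $C_t$.

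Together the two cases show $\tilde\sigma_{P,Q}$ is fixed-point free. The only mildly delicate step is the lifting to the resolution, together with the claim that the lift permutes exceptional curves according to the action on singular points. Both follow from functoriality of blowing up along the $\bar\sigma$-stable reduced closed subscheme of singular points, and I would cite \cite[Section 2.4]{Kollar} for this.
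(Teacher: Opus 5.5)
Your proof is correct and follows essentially the same route as the paper: you check that $\sigma_{P,Q}$ commutes with $\iota$, lift the induced map to the blow-up of the $\sigma_{P,Q}$-stable singular locus via the universal property, and rule out fixed points using $P\neq O$ and $Q\neq O$. The differences are cosmetic. The paper covers the exceptional curves implicitly, by observing that a fixed point of $\tilde\sigma_{P,Q}$ maps under $\pi$ to a fixed point on $A/\iota$, so your two cases collapse into a single check there. You, in turn, additionally verify $\tilde\sigma_{P,Q}^2=\mathrm{id}$, which the paper leaves implicit.
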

    \begin{proof}
        Recall that we denote the negation involution on $A$ by $\iota$. The composition of $\sigma_{P,Q}$ and $\iota$ gives 
        \[
            \iota \sigma_{P,Q}(x,y) = \iota(-x+P, y+Q) = (x+P, -y+Q)
        \]
        since $P=-P$ and $Q = -Q$ as they are 2-torsion points. Also,
        \[
            \sigma_{P,Q}\iota(x,y) = \sigma_{P,Q}(-x,-y) = (x+P, -y+Q).
        \]
        Therefore, the two actions commute, so $\sigma_{P,Q}$ induces an action on the quotient $A/\iota$. Since $X$ is defined to be the blowup of $A/\iota$ at the 16 singular points, we have the diagram
        \[
            \begin{tikzcd}
                X \arrow{d}{\pi} & X \arrow{d}{\pi} \\
                A/\iota \arrow{r}{\sigma_{P,Q}} & A/\iota.
            \end{tikzcd}
        \]
        where $\pi$ is the morphism from the blow-up. It is easy to see from the definition of $\sigma_{P,Q}$ that $A[2]$ is invariant under the action, so it takes singular points to singular points on $A/\iota$. Denote the set of singular points by $Z$. Then $(\sigma_{P,Q}\circ \pi)^{-1}(Z)$ is the 16 lines on $X$ obtained from blowing up, so by the universal property of blowing up, $\sigma_{P,Q}$ induces an involution $\tilde{\sigma}_{P,Q}$ on $X$. It remains to check that this action is fixed-point free. By the commutativity of the diagram 
        \[
            \begin{tikzcd}
                X \arrow{d}{\pi} \arrow{r}{\tilde{\sigma}_{P,Q}} & X \arrow{d}{\pi} \\
                A/\iota \arrow{r}{\sigma_{P,Q}} & A/\iota,
            \end{tikzcd}
        \]
        it suffices to check that $\sigma_{P,Q}$ acts freely on $A/\iota$. But this amounts to verifying that $\sigma_{P,Q}(x,y)\neq (x,y)$ or $(-x,-y)$. Since $P,Q$ are nontrivial by assumption, $-x+ P \neq -x $ and $y \neq y+Q$, so $\sigma_{P,Q}(x,y) = (-x+P, y+Q) \neq (x,y)$ or $(-x,-y)$. 
    \end{proof}

    \begin{defn}
        We define the surface $S_{P,Q} := X/\tilde{\sigma}_{P,Q}$. By Lemma \ref{lem: KumInv} and Proposition \ref{prop: K3QuotGivesEn} this is an Enriques surface. We refer to Enriques surfaces of this form by Enriques surfaces of Lieberman type.
    \end{defn}
    We will occasionally omit the $P,Q$ from the notation of these Enriques surfaces when it will not lead to any confusion.
    
    \begin{rem}
        In \cite{Ohashi} Section 4, Ohashi proves that there are 15 Enriques quotients of a generic Kummer surface associated to a product of elliptic curves over $\C$. There are 9 that come from involutions of Lieberman type and 6 more that come from involutions of Kondo-Mukai type. 
    \end{rem}
	
	\subsection{Upper Bounds on Chow Groups of Enriques Surfaces}
	
	In this section, we show that the degree-zero subgroup of the Chow group of an Enriques Surfaces of Lieberman type over any field of characteristic different from $2$ or $3$ is 16 torsion. 
	
	Keep the notation of Section \ref{sec: EnSurfPre}. Define the surface $W := A/\langle \sigma_{P,Q}\rangle$ to be the quotient of the abelian surface by the involution $\sigma_{P,Q}$. Note that by the same argument as Lemma \ref{lem: KumInv}, the action of $\sigma_{P,Q}$ on $A$ is fixed-point-free, so $W$ is smooth and the natural quotient map $A \to W$ is a finite étale map of degree 2. 
	
	\begin{prop}
		The surface $W$ is a bielliptic surface. 
	\end{prop}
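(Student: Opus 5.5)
The plan is to show directly that $W = A/\langle\sigma_{P,Q}\rangle$ has the form required by the definition of a bielliptic surface: a quotient $(E_1'\times E_2')/G$ with $G$ acting on $E_2'$ by translations and on $E_1'$ with $E_1'/G\cong\mathbb{P}^1$. The natural candidate is $E_1' = E_1$ and $E_2' = E_2$ with $G = \langle \sigma_{P,Q}\rangle \cong \Z/2\Z$. Here $\sigma_{P,Q}$ is the product of the automorphism $x\mapsto -x+P$ of $E_1$ and the translation $y\mapsto y+Q$ of $E_2$, so $G$ acts diagonally on $E_1\times E_2$. The second factor is a translation by the nontrivial $2$-torsion point $Q$, as the definition requires.

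The key step is to check that the action of $G$ on $E_1$ has quotient $\mathbb{P}^1$. The map $\tau\colon x\mapsto -x+P$ is an involution of $E_1$. Conjugating by the translation $t_{P'}$, where $2P'=P$ (such a point exists over $\overline{k}$), gives
\[
t_{P'}^{-1}\circ\tau\circ t_{P'}(x) = -(x+P')+P-P' = -x.
\]
So over $\overline{k}$, $\tau$ is conjugate to $\iota$, and $E_1/\tau \cong E_1/\iota \cong \mathbb{P}^1$ geometrically. Over $k$ itself, the quotient $E_1/\tau$ is a smooth genus-$0$ curve. It has a rational point, namely the image of $O_{E_1}$ (or of any rational $2$-torsion point), so it is $\mathbb{P}^1_k$. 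Alternatively, one can note that $\tau$ has fixed points: these are the $x$ with $2x=P$, four of them over $\overline{k}$. Riemann--Hurwitz then gives genus $0$, since $0 = 2\cdot 2(g-1) + 4$ forces $g=0$.

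Finally, I would record that the diagonal action is free, as already noted via Lemma \ref{lem: KumInv}, because the $E_2$-component is a nontrivial translation. Hence $W$ is a smooth projective surface, as a quotient of a projective variety by a finite group, and it is presented exactly as in the definition. If one also wants the standard invariants as a sanity check: $\kappa(W)=0$ because $A\to W$ is étale. The irregularity is $q(W)=\dim H^0(\Omega^1_A)^G = 1$, since $G$ acts by $-1$ on the differential of $E_1$ and by $+1$ on that of $E_2$. Likewise $p_g(W)=0$, since $G$ acts on $dx\wedge dy$ by $-1$.

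The only mild obstacle is the field of definition. One must confirm that $E_1/\tau\cong\mathbb{P}^1$ over $k$ and not merely over $\overline{k}$, and that the quotient exists as a variety over $k$. Both are handled by the rational point argument above and the projectivity of $A$, so I expect the proof to be short.
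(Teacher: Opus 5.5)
Your proof is correct and follows essentially the same route as the paper. Both identify $G=\langle\sigma_{P,Q}\rangle$ as acting on $E_2$ by translation by $Q$, and then show $E_1/G$ is a smooth, geometrically genus-zero curve with a rational point (the image of $O_{E_1}$), hence $\mathbb{P}^1_k$. The paper obtains genus zero exactly by your alternative Riemann--Hurwitz argument: a point $x$ with $2x=P$ is fixed, so $\deg R>0$. Your conjugation $t_{P'}^{-1}\circ\tau\circ t_{P'}=\iota$ is an equally valid and slightly more explicit way to see this.
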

	\begin{proof}
		Let $\sigma$ denote $\sigma_{P,Q}$. The surface $W$ is the quotient of $E_1\times E_2$ given by a finite group action that acts on $E_2$ by a nonzero 2-torsion point. So, it remains to show that the quotient of $E_1$ by the action of $\sigma$ is isomorphic to $\mathbb{P}^1$. First note that the quotient is smooth since the fixed points of a finite group acting on a normal domain is still normal. It also has a $k$-rational point since $E_1$ does. Therefore, it remains to show that it has genus $0$. Since the genus is invariant under field extension, it suffices to assume that the base field $k$ is algebraically closed. Consider the natural quotient $\pi: E_1 \to E_1/\langle \sigma\rangle$. Then by Riemann Hurwitz
		\begin{align*}
			2g(E_1) - 2 &= \deg(\pi)(2g(E_1/\langle \sigma\rangle) - 2) + \deg R \\
			0 &= \deg(\pi)(2g(E_1/\langle \sigma\rangle) - 2) + \deg R \\
			- 2 (2g(E_1/\langle \sigma\rangle) - 2) &= \deg R
		\end{align*}
        where $R$ is the ramification divisor on $X$. But there exists $x \in E_1(\overline{k})$ such that $2x = P$. Since $x$ is fixed by $\langle \sigma \rangle$, $\pi$ is ramified at $x$, so $\deg R > 0$. Thus, $2g(E_1/\langle \sigma\rangle) - 2 < 0$, which implies that the genus of the quotient is $0$. 
	\end{proof}

    Let us consider how all of these surfaces are related. Let $G := \langle \sigma_{P, Q}\rangle$, $\tilde{G} := \langle \tilde{\sigma}_{P, Q}\rangle$, and $H := \langle \iota \rangle$. Since $X$ is birational to the quotient of $A$ by the action of $H$, we get that $k(X) = k(A)^H$. Similarly, $k(S) = k(X)^{\tilde{G}}$, and $k(W) = k(A)^G$. Also, the action of $G$ on $k(A)$ restricts to an action on $k(A)^H$ since the actions of $H$ and $G$ commute, and this restriction is equal to the action of $\tilde{G}$. Therefore, $k(S) = k(X)^{\tilde{G}} = (k(A)^H)^G$. Therefore, $k(S) \into k(W)$. So we get a commutative diagram 
    \[
    \begin{tikzcd}
         A \arrow[dashed]{d} \arrow{r} & W \arrow[dashed]{d} \\
        X \arrow{r} & S.
    \end{tikzcd}
    \]

    \begin{lem}
        Let $Z \subset W$ denote the closed subscheme given by the image of $A[2]$ under the map $A \to W$. Let $\tilde{W}$ be the blowup of $W$ at $Z$. Then the rational map $W \dashrightarrow S$ induces a degree 2 morphism $\varphi: \tilde{W} \to S$. 
    \end{lem}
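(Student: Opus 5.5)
The plan is to realize $\tilde W$ as a quotient of a blowup of $A$, so that the map to $S$ factors through the known morphism $\tilde{A}\to X$. Let $\tilde{A}$ denote the blowup of $A$ at the finite subscheme $A[2]$, which consists of $16$ reduced points over $\overline{k}$. The standard construction of the Kummer surface identifies $\tilde{A}/\langle\tilde\iota\rangle \cong X$. Here $\tilde\iota$ is the lift of $\iota$ to $\tilde A$. It fixes the $16$ exceptional curves pointwise, since $\iota$ acts as $-1$ on each tangent space at a $2$-torsion point. So the quotient $\tilde A \to X$ is a finite flat double cover ramified along the exceptional divisor, and it lies over $A\to A/\iota$.

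First I will check that $\sigma:=\sigma_{P,Q}$ preserves $A[2]$, as in the proof of Lemma \ref{lem: KumInv}. Then the universal property of blowing up lifts $\sigma$ to a free involution $\tilde\sigma_A$ on $\tilde A$ which commutes with $\tilde\iota$. Commutation holds because both lifts are determined by their restriction to the dense open set $A\setminus A[2]$, where $\sigma$ and $\iota$ commute.

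Next I will use that the quotient map $q\colon A\to W$ is finite étale, and that $Z=q(A[2])=q^{-1}(q(A[2]))/G$ with $q^{-1}(Z)=A[2]$. Blowing up commutes with flat base change, so $\tilde A = \tilde W\times_W A$, and hence $\tilde W = \tilde A/\langle \tilde\sigma_A\rangle$. Since $\tilde\sigma_A$ is free, $\tilde A\to\tilde W$ is étale of degree $2$. Consider the composite $\tilde A\to X\to S$. It is invariant under $\tilde\sigma_A$, because $X\to S$ is the quotient by $\tilde\sigma_{P,Q}$, which is compatible with $\tilde\sigma_A$ by construction. Therefore it descends along the categorical quotient $\tilde A\to\tilde W$ to a morphism $\varphi\colon\tilde W\to S$. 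This morphism extends the rational map $W\dashrightarrow S$, since both agree on the open set where they are defined.

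To compute the degree, I will use the commutative square of morphisms $\tilde A\to\tilde W\to S$ and $\tilde A\to X\to S$. This gives $2\deg\varphi = 2\cdot 2$, so $\deg\varphi=2$. The same count follows from $[k(W):k(S)]=|\langle\sigma,\iota\rangle|/|G|=2$, using the function-field description $k(S)=(k(A)^H)^G$. Finiteness of $\varphi$ also follows, since $\tilde A\to S$ is finite and $\tilde A\to \tilde W$ is surjective.

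The main obstacle I expect is the clean identification $\tilde A/\tilde\iota\cong X$ over a non-closed field, that is, that the blowup of the singular points of $A/\iota$ agrees with the quotient of the blowup of $A[2]$. I would verify this étale-locally, or after base change to $\overline{k}$, at a $2$-torsion point. There the $A_1$ singularity $\mathbb{A}^2/\pm1$ has minimal resolution equal to $\mathrm{Bl}_0\mathbb{A}^2/\pm1$. Since formation of the blowup and of the quotient both commute with flat base change, this descends to $k$. If that step proves awkward, the fallback is to construct $\varphi$ only as a rational map $\tilde W\dashrightarrow S$. Its indeterminacy could lie only over $Z$, and there it can be resolved directly by the universal property, using the fact that the exceptional curves on $\tilde W$ map to the images of the $16$ lines of $X$.
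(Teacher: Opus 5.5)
Your proposal is correct and follows the same route as the paper: identify $\tilde W=\tilde A/\langle\sigma_{P,Q}\rangle$ and $S=\tilde A/\langle\iota,\sigma_{P,Q}\rangle$, and note that the morphism $\tilde A\to X\to S$ therefore descends to $\varphi\colon\tilde W\to S$. You also make explicit several details the paper leaves implicit: flat base change of the blowup along the \'etale map $A\to W$, the identification $\tilde A/\tilde\iota\cong X$, and the degree count $2\deg\varphi=4$.
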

    \begin{proof}
        Let $\tilde{A}$ denote the blowup of $A$ at the sixteen 2-torsion points. Note that blowing up here commutes with both quotients of $A$. So we have that $\tilde{W} = \tilde{A}/\langle\sigma_{P,Q}\rangle$ and also $S = \tilde{A}/\langle \iota, \sigma_{P,Q}\rangle$. Therefore, in the commutative diagram
        \[
            \begin{tikzcd}
                \tilde{A} \arrow[r] \arrow[d] & \tilde{W} \arrow[d] \\
                A \arrow{r} \arrow[dashed, d] & W\arrow[dashed, d] \\
                X \arrow[r] & S,
            \end{tikzcd}
        \]
        the rational map $\tilde{A} \to X$ defines a morphism, so the map $\varphi: \tilde{W} \to S$ does as well. 
    \end{proof} 
    
	\begin{thm}\label{thm: ChUpperBound}
		The group $A_0(S)$ is 16-torsion. 
	\end{thm}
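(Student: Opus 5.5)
We show that every $z \in A_0(S)$ satisfies $16z = 0$ by combining the degree-2 morphism $\varphi:\tilde W\to S$ with the structure of zero-cycles on the bielliptic surface $W$. Push-pull along the finite flat morphism $\varphi$ gives $\varphi_*\varphi^* = 2$ on $\ch_0(S)$. Here $\varphi$ is flat because it is a finite morphism between smooth surfaces, and its degree is $2$. So it suffices to show that $8\,\varphi_* w = 0$ for every $w \in A_0(\tilde W)$; then $16 z = \varphi_*(8\varphi^* z) = 0$.

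By Proposition \ref{prop:ChBirInv}, the blow-down $\tilde W \to W$ induces an isomorphism $A_0(\tilde W)\cong A_0(W)$. We can therefore work with $A_0(W)$, compatibly transported to $S$ via $\tilde W$. Next we use the finite étale degree-2 map $g: A\to W$, for which $g_*g^* = 2$. The Albanese of $W$ is $E_2/\langle Q\rangle$, since $E_1/\langle\sigma\rangle\cong\mathbb{P}^1$.

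The strategy is to split $A_0(W)$ into an Albanese part and a kernel part and kill each with a factor of $2$.

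\textbf{Albanese kernel.} For $w\in T(W)$ we have $2w = g_*g^*w$ with $g^*w\in T(A)$. The composite $T(A)\to T(W)\to$ (image in $S$) factors through the rational map $f:A\dashrightarrow S$, which after blowing up factors through $X$, with $\iota$ acting on the image. On $T(A)$ the involution $\iota$ acts trivially: for $A=E_1\times E_2$, $T(A)$ is generated by symbols $[(a,b)]-[(a,0)]-[(0,b)]+[(0,0)]$, and $\iota^*$ fixes these up to $2$-torsion-type relations. I expect $(1-\iota^*)$ to kill $T(A)$ after multiplying by $2$; the cleanest route is that $T(A)$ is generated by products $\{a,b\}$ with $\iota^*\{a,b\} = \{-a,-b\}=\{a,b\}$ by bilinearity. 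Pushing forward to $S$ then gives a factor: the image of $T(A)$ in $\ch_0(S)$ equals the image of $\pi_*T(A)\subset T(X)$ modulo $\tilde\sigma$. Separately, the translation part of $\sigma$ acts on each symbol by $\{-a+P,b+Q\}$; by the bilinearity and triviality of translation on Albanese kernels up to $2$-torsion, this reduces to $\pm\{a,b\}$ up to $2$-torsion. Combining, the image of $T(W)$ in $A_0(S)$ is killed by $4$.

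\textbf{Albanese quotient.} Classes in $A_0(W)$ modulo $T(W)$ are determined by points of $E_2/\langle Q\rangle$, represented by cycles $[(x_0,y)]-[(x_0,0)]$ pushed from $A$. On $S$, the cycle $[(x_0,y)]-[(x_0,0)]$ is identified with $[(x_0,-y)]-[(x_0,0)]$ via $\iota$ (up to the $\sigma$-twist), hence also with its negative. Thus twice it vanishes modulo the image of $T(W)$. Together with the previous paragraph, this gives $8\varphi_*w=0$.

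The main obstacle is the precise bookkeeping of the symbol computation: showing that translation by the $2$-torsion point $(P,Q)$ and negation interact on $T(A)$ so that the image in $S$ is killed by a bounded power of $2$. I would first try to use the Gazaki-type description of $T(E_1\times E_2)$ via Somekawa $K$-groups $K(k;E_1,E_2)$ and their functoriality to do this calculation.
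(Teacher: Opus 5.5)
Your reduction via $\varphi_*\varphi^*=2$ and $A_0(\tilde W)\cong A_0(W)$ is fine, but the step that actually produces the bound is never proved. Your claim that the image of $T(W)$ in $A_0(S)$ is killed by $4$ needs $f_*T(A)$ to be $2$-torsion. That requires $\sigma_*=-1$ \emph{exactly} on $T(A)$, so that $f_*z=f_*\sigma_*z=-f_*z$. Invariance of the symbols under $\iota$ gives nothing here; the sign has to come from $\sigma$. You only assert this ``up to $2$-torsion'', and you flag it yourself as the main obstacle. With $\sigma_*z=-z+\epsilon$ and $2\epsilon=0$, the same argument yields only $2f_*z=f_*\epsilon$, hence $4f_*z=0$, and your exponents no longer multiply to $16$.

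The Albanese-quotient paragraph has a similar hole. A class of $A_0(W)$ need not lie in $g_*A_0(A)+T(W)$; in general only $2w=g_*g^*w$ is a push-forward, which costs another factor of $2$. That paragraph is also unnecessary: $\Alb_S=0$ gives $A_0(S)=T(S)$, and $\varphi^*$ maps $T(S)$ into $T(\tilde W)$, so only the Albanese kernel of $\tilde W$ matters.

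The paper closes the argument by citing Gazaki's theorem that $T(W)$ is $8$-torsion for the bielliptic surface $W$, and transports this to $T(\tilde W)$ by Proposition~\ref{prop:ChBirInv}. Then $2z=\varphi_*\varphi^*z$ gives $16z=0$.

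If you prefer your hands-on route, the symbol computation can be made exact and elementary:
\begin{enumerate}
\item For $L/k$ finite, $a\in E_1(L)$ and $b\in E_2(L)$, set $\langle a,b\rangle=[(a,b)]-[(a,0)]-[(0,b)]+[(0,0)]\in\ch_0(A_L)$.
\item This is bilinear. For example, $\langle a+a',b\rangle-\langle a,b\rangle-\langle a',b\rangle$ is the push-forward along $x\mapsto(x,b)$ minus the push-forward along $x\mapsto(x,0)$ of the principal divisor $[a+a']-[a]-[a']+[0]$ on $E_{1,L}$.
\item Since $P$ and $Q$ are rational, expanding and using bilinearity gives $\sigma_*\langle a,b\rangle=\langle -a+P,b\rangle-\langle P,b\rangle=-\langle a,b\rangle$ on the nose.
\item Put $i_1(x)=(x,0)$ and $i_2(y)=(0,y)$; $\iota_*$ acts by $-1$ on $i_{1*}A_0(E_1)+i_{2*}A_0(E_2)$. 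Norms of the symbols $\langle a,b\rangle$ together with this subgroup generate $A_0(A)$.
\item Transport this through the equivariant isomorphism $A_0(\tilde A)\cong A_0(A)$ and use $f\circ\sigma=f\circ\iota=f$. You get that $f_*A_0(\tilde A)$ is $2$-torsion.
\item For the finite flat degree-$4$ map $f:\tilde A\to S$ you have $4z=f_*f^*z$, which then gives $8z=0$, a stronger bound than the theorem's.
\end{enumerate}
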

	\begin{proof}
		Consider the degree 2 morphism $\varphi: \tilde{W}\to S$. The composition
		\[
			T(S) \xrightarrow{\varphi^*} T(\tilde{W}) \xrightarrow{\varphi_*} T(S)
		\]
		is multiplication by 2. Also, recall that $A_0(S) = T(S)$ since the Albanese variety of an Enriques surface is trivial. Therefore, it suffices to show that $T(\tilde{W})$ is 8-torsion. The group $T(W)$ is 8-torsion by \cite[Theorem 1]{Gazaki}. Also the birational map $\tilde{W} \to W$ induces an isomorphism on $A_0$ by Proposition \ref{prop:ChBirInv}, and this isomorphism restricts to $T(\tilde{W})$. Thus, $T(\tilde{W})$ is $8$ torsion which completes the proof. 
	\end{proof}

	\section{Brauer Groups of K3 and Enriques Surfaces}
	
	Now that we have an upper bound on the torsion of the degree-zero subgroup of the Chow group of these Enriques surfaces, we would like to study when this group is actually nontrivial. Our main tool will be the Brauer-Manin pairing. In order to make use of this pairing, we first need to establish some results about the Brauer group of Enriques surfaces of Lieberman type.
    
    Let $k$ be a field of characteristic 0 and $Y$ a smooth projective variety over $k$. The Brauer group of $Y$ is defined to be the group of Azumaya algebras over $Y$ modulo Morita equivalence. We define the cohomological Brauer group by 
	\[
	\Br(Y) := H^2_{\et}(Y, \mathbb{G}_m).
	\]
	By a result of Gabber, the cohomological Brauer group of $Y$ agrees with the Brauer group of $Y$ in this case \cite{deJong}. Therefore, we will identify these two groups throughout. Note that by the cohomological definition of the Brauer group, it is clear that it satisfies contravariant functoriality for morphisms.
	
	\begin{rem}
		Note that the cohomological Brauer group is usually defined as the torsion subgroup of $H^2_{\et}(Y, \mathbb{G}_m)$. However, in this case, $H^2_{\et}(Y, \mathbb{G}_m)$ is already torsion \cite[Chapter 3, Example 2.22]{Milne}.
	\end{rem}
	
	The structure morphism $Y \to \Spec k$ induces a map $\Br(k) \to \Br(Y)$ and we define the subgroup of constants by
	\[
	\Br_0(Y):= \text{im}\left(\Br(k) \to \Br(Y)\right).
	\]
	Also, $\overline{Y} \to Y$ induces a morphism $\Br(Y) \to \Br(\overline{Y})$. We denote the subgroup of algebraic elements by 
	\[
	\Br_1(Y) := \ker\left(\Br(Y) \to \Br(\overline{Y})\right).
	\]
	Note that the composition $\overline{Y} \to Y \to \Spec k$ is the same as $\overline{Y} \to \Spec \overline{k} \to \Spec k$. And since $\Br(\overline{k}) = 0$, we have that the composition $\Br(k) \to \Br(Y) \to \Br(\overline{Y})$ is the zero map. Therefore, we get 
	\[
	\Br_0(Y) \subseteq \Br_1(Y) \subseteq \Br(Y).
	\]
	The quotient $\Br(Y)/\Br_1(Y)$ is known as the transcendental Brauer group and will be an important object of study in what follows. 
	
	We will need the following exact sequences which are due to Grothendieck in this form. 
	\begin{thm}\label{thm: BrPurity}(\cite[Section 3.7]{Colliot-TheleneSkorobogatov})
		Let $Y$ be a smooth quasi-projective variety and let $U \subset Y$ be a nonempty open subset of $Y$. Then there exists a functorial exact sequence 
        \[
		0 \to \Br(Y) \to \Br(U) \to  \bigoplus_D H^1(k(D),\Q/\Z)
		\]
        where $D$ ranges over all irreducible divisors in $Y\setminus U$ and $k(D)$ denotes the residue field at the generic point of $D$. 

        Now let $k(Y)$ be the function field of $Y$. Then similarly there exists an exact sequence, functorial in $Y$ 
		\[
		0 \to \Br(Y) \to \Br(k(Y)) \to  \bigoplus_{D \in Y^{(1)}} H^1(k(D),\Q/\Z).
		\]
        where $k(D)$ denotes the residue field at the point $D$.
	\end{thm}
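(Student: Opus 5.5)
The plan is to obtain both sequences from Grothendieck's purity theorem for the Brauer group, combined with the localization (Gysin) sequence in étale cohomology, working prime by prime on torsion. Since $Y$ is smooth over a field of characteristic $0$, the groups $\Br(Y)$ and $\Br(U)$ are torsion (by the remark above, with the same argument on $U$). It therefore suffices to control $\Br(\,\cdot\,)[n]$, or equivalently $H^2_{\et}(\,\cdot\,,\mu_n)$, for each $n$. These are related to $\Br[n]$ by the Kummer sequence $0 \to \Pic/n \to H^2(\,\cdot\,,\mu_n) \to \Br[n] \to 0$.

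First, I reduce to the case where $Z := Y\setminus U$ is a divisor with its singular locus removed. Write $Z = Z_1 \cup Z_2$, where $Z_2$ is the union of the components of codimension $\geq 2$ together with the singular locus of the divisorial part. Set $Y' = Y\setminus Z_2$. Purity for cohomology with $\mu_n$ coefficients in codimension $\geq 2$ (Gabber's absolute purity, or the classical version over fields) gives $H^i_{Z_2}(Y,\mu_n)=0$ for $i\le 3$. Hence $H^2(Y,\mu_n)\cong H^2(Y',\mu_n)$, and $H^3$ injects. Since $\Pic Y \to \Pic Y'$ is also an isomorphism ($Y$ is regular and the codimension is $\geq 2$), we get $\Br(Y)\cong\Br(Y')$. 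Now $D' := Y'\setminus U$ is a smooth divisor whose components are the $D$'s with their singular points removed.

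Next, for the smooth pair $(Y',D')$, absolute purity gives $H^i_{D'}(Y',\mu_n)\cong H^{i-2}(D',\Z/n)$. The localization sequence then reads
\[ H^2_{D'}(Y',\mu_n)\to H^2(Y',\mu_n)\to H^2(U,\mu_n)\to H^1(D',\Z/n)\to H^3(Y',\mu_n). \]
Here $H^2_{D'}=H^0(D',\Z/n)$ maps onto the classes of the components of $D'$ in $\Pic(Y')/n$. Passing to the quotient by $\Pic/n$, and using that $\Pic Y'\to\Pic U$ is surjective with kernel generated by these components, I obtain injectivity $\Br(Y')[n]\into\Br(U)[n]$. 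The composite $\Br(U)[n]\to H^1(D',\Z/n)\to\bigoplus_D H^1(k(D),\Z/n)$ is the residue map. I will check exactness in the middle by a diagram chase: an element of $\Br(U)[n]$ with zero residues lifts to $H^2(U,\mu_n)$. Its image in $H^1(D',\Z/n)$ is trivial at the generic points, hence trivial on an open subset of $D'$. After shrinking further in codimension $2$, which does not change anything by the first step, this image is trivial. So the lift comes from $H^2(Y',\mu_n)$. Taking the direct limit over $n$ and using $\varinjlim \Z/n=\Q/\Z$ gives the first sequence. Functoriality follows because the localization sequence and the purity isomorphisms are natural.

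For the second sequence, I pass to the limit over all nonempty opens $U$. Brauer groups commute with this filtered limit, so $\varinjlim_U \Br(U)=\Br(k(Y))$. The direct sums of residues assemble into $\bigoplus_{D\in Y^{(1)}}H^1(k(D),\Q/\Z)$, and exactness is preserved under filtered colimits.

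The main obstacle is the middle-exactness step: the residues only see the generic points $k(D)$, while the Gysin sequence lives on all of $D'$. Handling this needs the codimension-$2$ purity once more, together with injectivity of $H^1(D',\Z/n)\to H^1(k(D'),\Z/n)$ up to removing a closed subset of $D'$. This is where I would lean on the references (Colliot-Thélène–Skorobogatov, Section 3.7) rather than reprove absolute purity.
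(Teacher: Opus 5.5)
Your argument is correct and is essentially the standard proof in the reference the paper cites; the paper gives no proof of its own beyond Colliot-Thélène--Skorobogatov, Section 3.7. That proof runs through the same steps as yours: purity in codimension $\geq 2$ to reduce to a smooth divisor, then the Gysin sequence from absolute purity combined with the Kummer sequence, then passage to the limit over $n$ and over open subsets. The only difference is cosmetic: you kill the residue class by shrinking $D'$ in codimension $2$, whereas one can note directly that $H^1(D',\Z/n)\to\bigoplus_D H^1(k(D),\Z/n)$ is already injective because $D'$ is normal.
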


    In the case when $k = \C$, a result of Beauville implies that the Brauer group of the Enriques surface of Lieberman type $S$ injects into the Brauer group of the K3 cover $X$ if the elliptic curves are not isogenous. 
    
	\begin{prop}\label{prop: BrStoBrXgeoinj}
		Let $k = \C$ and suppose that the elliptic curves $E_1$ and $E_2$ are not isogenous. Let $X$ denote the Kummer surface associated to $A = E_1\times E_2$ and let $S$ be the Enriques surface of Lieberman type associated to the involution $\sigma_{P_0,Q_0}$ for fixed $P_0 \in E_1[2]$ and $Q_0 \in E_2[2]$. We denote the map $\pi : X \to S$. Then the morphism
		\[
		\pi^* : \Br(S) \to \Br(X)
		\]
		is injective.
	\end{prop}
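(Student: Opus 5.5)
The plan is to reduce the statement to a lattice-theoretic criterion of Beauville for Enriques surfaces over $\C$. Since $S$ is an Enriques surface over $\C$, we have $\Br(S)\cong H^3(S,\Z)_{\mathrm{tors}}\cong \Z/2\Z$, so injectivity of $\pi^*$ means exactly that the unique nonzero class $b\in\Br(S)$ does not die in $\Br(X)$. Beauville's criterion for the universal K3 cover $\pi\colon X\to S$ with covering involution $\tilde\sigma=\tilde\sigma_{P_0,Q_0}$ says that $\pi^*b=0$ if and only if there is a line bundle $L$ on $X$ with $\tilde\sigma^*L\cong L^{-1}$ and $L^2\equiv 2 \pmod 4$. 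One can also see this from the Hochschild--Serre spectral sequence for the étale double cover, which expresses $\ker \pi^*$ in terms of $H^1(\Z/2,\Pic X)$. So it suffices to show that every class in the anti-invariant part
\[
N^- := \{x\in \mathrm{NS}(X) : \tilde\sigma^* x = -x\}
\]
has self-intersection divisible by $4$.

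Next I will describe $\mathrm{NS}(X)$ explicitly using the non-isogeny hypothesis. Since $E_1,E_2$ are not isogenous, $\mathrm{NS}(A)=\Z e_1\oplus \Z e_2$, where $e_1=[\{0\}\times E_2]$ and $e_2=[E_1\times\{0\}]$. Hence $\mathrm{NS}(X)$ has rank $18$. Over $\Q$ it is spanned by the images $F_1,F_2$ of the two fibre classes and by the $16$ exceptional curves $C_t$, $t\in A[2]$. Integrally, $\mathrm{NS}(X)$ is the standard overlattice obtained by adjoining:
\begin{itemize}
\item the Kummer-lattice half-sums $\tfrac12\sum_{t\in H}C_t$, for $H$ an affine hyperplane of $A[2]\cong \F_2^4$;
\item classes of the form $\tfrac12\bigl(F_i-\sum_{t\in \ell}C_t\bigr)$, coming from the images of the curves $E_1\times\{b\}$ and $\{a\}\times E_2$ with $a,b$ $2$-torsion, where $\ell$ is the corresponding coset.
\end{itemize}
I will quote this description from the standard references on Kummer surfaces of products. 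The involution acts as follows. It fixes $F_1$ and $F_2$, because translating or negating a fibre keeps its class. It permutes the $C_t$ by the fixed-point-free translation $t\mapsto t+(P_0,Q_0)$.

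Then I will compute $N^-$ and its quadratic form. Over $\Q$, $N^-\otimes\Q$ is spanned by the eight vectors $C_t-C_{t+(P_0,Q_0)}$, one for each orbit. These are mutually orthogonal and each has square $-4$. So any $x=\sum_t c_t\,(C_t-C_{t+(P_0,Q_0)})$ has $x^2=-4\sum c_t^2$. For integral $c_t$ this is $\equiv 0\pmod 4$. The step I expect to be the main obstacle is controlling the half-integral anti-invariant classes coming from the overlattice, for which $c_t\in\tfrac12\Z$ and the naive bound only gives $x^2\in\Z$.

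For those classes I will argue as follows. Write $x=\tfrac12 y$ with $y$ in the lattice $\langle F_1,F_2,C_t\rangle$, so $y$ is anti-invariant with integer coefficients. The condition that $\tfrac12 y\in\mathrm{NS}(X)$ forces the set $\{t : c_t\text{ odd}\}$ to lie in the binary code spanned by the hyperplanes $H$; the $F_i$-parts cancel by anti-invariance. The code words meeting each orbit $\{t,t+(P_0,Q_0)\}$ in exactly one point are unions of an even number of orbits in size; in practice they are hyperplanes or the complement pieces, of size $8$ or $16$. Then
\[
x^2=-\sum_t c_t^2 \quad\text{(over the 8 orbits)},
\]
computed with $c_t\in\tfrac12\Z$. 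The number of orbits with $c_t\notin\Z$ is a multiple of $8$, which shows $x^2\equiv 0\pmod 4$. For example, $\tfrac12\bigl(\sum_{H}C_t-\sum_{H+(P_0,Q_0)}C_t\bigr)$ has square $-8$. Hence no $L$ as in Beauville's criterion exists, and $\pi^*$ is injective.
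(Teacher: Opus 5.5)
Your strategy is the same as the paper's: use Beauville's criterion (his Corollary 5.7) to reduce to showing that every $\tilde\sigma$-anti-invariant class in $\mathrm{NS}(X)$ has square divisible by $4$. On the integral structure you are more careful than the paper. Its proof treats $\Pic(X)$ as freely generated by the sixteen $D_{P,Q}$ and $\mathcal{E}_1,\mathcal{E}_2$, so it only examines integral combinations of the $C_t-C_{t+s}$, where $s=(P_0,Q_0)$. Those 18 classes span only a finite-index sublattice, and half-integral anti-invariant classes do occur, as you anticipate.

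However, the step you flag as the main obstacle is misstated.

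\begin{itemize}
\item Anti-invariance gives $d_{t+s}=-d_t$ for the coefficients of $y=2x$. So the odd support $T=\{t : d_t\ \text{odd}\}$ is a union of $s$-orbits: it meets each orbit in $0$ or $2$ points, not in ``exactly one''.
\item Since $x\in\bigoplus_t\Q C_t\cap H^2(X,\Z)$, it lies in the Kummer lattice. Hence $T$ is an $s$-stable word of the Kummer code, namely $\emptyset$, $A[2]$, or an affine hyperplane $\{\lambda=c\}$ with $\lambda(s)=0$.
\item The last case consists of \emph{four} orbits. So the number of orbits carrying a half-integral coefficient is $0$, $4$ or $8$, not always a multiple of $8$; you seem to have conflated points and orbits.
\item Concretely, let $u_1,\dots,u_4$ represent the four orbits in $\ker\lambda$, with $\lambda(s)=0$. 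Then $\tfrac12\sum_{i=1}^4(C_{u_i}-C_{u_i+s})$ is an anti-invariant class in $\mathrm{NS}(X)$ of square $-4$, with four half-integral orbits.
\item Your formula $x^2=-\sum_t c_t^2$ ``with $c_t\in\tfrac12\Z$'' also mixes the normalizations of $x$ and $y$.
\end{itemize}

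The repair is immediate. Write $x=\tfrac12\sum_t d_tC_t$ and let $t_1,\dots,t_8$ represent the eight orbits. Then
\[
x^2=-\tfrac12\sum_{t\in A[2]}d_t^2=-\sum_{i=1}^{8}d_{t_i}^2\equiv-\#\{i : d_{t_i}\ \text{odd}\}\pmod 4,
\]
and $0$, $4$, $8$ are all divisible by $4$. With this correction your argument is complete. It only needs the primitivity of the Kummer lattice, not the full gluing with $F_1,F_2$.
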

	\begin{proof}
        Denote the involution $\sigma_{P_0,Q_0}$ on $A$ by $\sigma$ and $\tilde{\sigma}_{P_0, Q_0}$ on $X$ by $\tilde{\sigma}$. By \cite[Corollary 5.7]{Beauville}, it suffices to show that the Kummer surface does not have a divisor class $L \in \Pic(X)$ such that $\tilde{\sigma}^* L = -L$ and $L\cdot L \equiv 2 (\text{mod } 4)$ where $\cdot$ denotes the intersection product on $X$. Let us set some notation. We denote the rational map from the abelian surface to the Kummer surface by $\phi: A \dashrightarrow X$. For $P \in E_1[2]$ and $Q \in E_2[2]$ denote $D_{P,Q} \in \Pic(X)$ the image of the rational curve in $X$ obtained by blowing up the point $(P,Q)$ on the quotient of the abelian surface $A/\iota$. Let us also denote $\mathcal{E}_1, \mathcal{E}_2 \in \Pic(X)$ the classes given by the image of $E_1$ and $E_2$ in $X$. That is, let $j_i: E_i \into E_1\times E_2$ be the inclusion into the $i$th factor for $i = 1,2$. Then the rational map $E_i \dashrightarrow X$ given by $\phi \circ j_i$ extends uniquely to a morphism, and $\mathcal{E}_i$ is the image of this morphism. Then since $E_1, E_2$ are geometrically non-isogenous by assumption, $\Pic(X)$ is the free abelian group generated by the 18 elements $\{D_{P,Q}\}_{(P,Q) \in E_1[2]\times E_2[2]}, \mathcal{E}_1, \mathcal{E}_2$ \cite[Proposition 3.1]{Shioda}. Let 
		\[
		L = m_1\mathcal{E}_1 + m_2 \mathcal{E}_2 + \sum_{(P,Q) \in E_1[2]\times E_2[2]} n_{P,Q} D_{P,Q}
		\] 
		be an arbitrary element of $\Pic(X)$ for some $m_1, m_2, n_{P,Q} \in \Z$ and suppose $\tilde{\sigma}^*(L) = -L$. 
        
        We first claim that $\tilde{\sigma}^*$ fixes $\mathcal{E}_1$ and $\mathcal{E}_2$. Consider the projection to the second factor $p_2: E_1\times E_2 \to E_2$. Then since $2Q_0 - 2O_{E_2} = 0 \in \Pic(E_2)$, we have that $p_2^*(2Q_0 - 2O_{E_2}) = 2(E_1\times Q_0 - E_1 \times O_{E_2}) = 0\in\Pic(A)$. Also since $\sigma$ is an involution, $\sigma^*(E_1\times O_{E_2}) = \sigma(E_1\times O_{E_2}) = E_1\times Q_0$. Therefore, $2(\sigma^*(E_1\times O_{E_2}) - E_1\times O_{E_2}) = 0 \in \Pic(A)$. We also have a map $\psi: \Pic(A) \to \Pic(X)$ which is defined as the composition
        \[
            \Pic(A) \to \Pic(\tilde{A}) \to \Pic(X)
        \]
        where the first map is the pullback and the second map is the pushforward. 
        
        Now by the definition of pushforward, $\psi(E_1\times O_{E_2}) = d\mathcal{E}_1$ where $d = [k(E_1\times O_{E_2}): k(\mathcal{E}_1)]$. Moreover, $\psi(\sigma^*(E_1\times O_{E_2}))$ is by definition $[k(\sigma(E_1\times O_{E_2})): k(\phi(\sigma(E_1\times O_{E_2})))] \phi(\sigma(E_1\times O_{E_2}))$. The commutativity of 
        \[
            \begin{tikzcd}
                E_1\times \{0\} \arrow[r] & A \arrow[dashed, d, "\phi"] \arrow[r, "\sigma"] & A \arrow[dashed, d, "\phi"] \\
                & X \arrow[r, "\tilde{\sigma}"] & X
            \end{tikzcd}
        \]
        implies that $\phi(\sigma(E_1\times O_{E_2})) = \tilde{\sigma}(\mathcal{E}_1) = \tilde{\sigma}^*(\mathcal{E}_1)$ and the fact that $\sigma$ is an automorphism implies that $[k(\sigma(E_1\times O_{E_2})): k(\phi(\sigma(E_1\times O_{E_2})))] = d$. Therefore, $\psi(\sigma^*(E_1\times O_{E_2})) = d\tilde{\sigma}^*(\mathcal{E}_1)$. Therefore,
        \[
            0 = \psi(2(\sigma^*(E_1\times O_{E_2}) - E_1\times O_{E_2})) = 2d\tilde{\sigma}^*(\mathcal{E}_1) - 2d\mathcal{E}_1.
        \]
        Since the Picard group of $X$ is torsion free, we get that $\tilde{\sigma}^*(\mathcal{E}_1) = \mathcal{E}_1$. And the same argument works for $\mathcal{E}_2$.   
        Since $\tilde{\sigma}^*$ fixes $\mathcal{E}_1$ and $\mathcal{E}_2$, we get that $a_1 = a_2 = 0$. Therefore, we get that
		\[
		\tilde{\sigma}^*(L) = \sum_{(P,Q)\in E_1[2]\times E_2[2]} n_{P,Q}D_{P+P_0, Q+Q_0} = \sum_{(P,Q)\in E_1[2]\times E_2[2]} -n_{P,Q}D_{P, Q} = - L,
		\]
		which implies $-n_{P,Q} = n_{P+P_0, Q+Q_0}$ for all $P, Q$. We also have \[
		L\cdot L = \sum_{(P,Q)\in E_1[2]\times E_2[2]} -2n_{P,Q}
		\] 
		since $D_{P,Q}\cdot D_{P',Q'} = -2$ if $(P,Q) = (P',Q')$ and $0$ otherwise. 
		Let $(P_i,Q_i)$ for $1 \le i \le 8$ be representatives for the action on $E_1[2]\times E_2[2]$ given by translation by $(P_0, Q_0)$. Then 
		\begin{align*}
			L\cdot L &= \sum_{i=1}^8 -2(n_{P_i,Q_i} + n_{P_i+P_0,Q_i+Q_0} )\\
			&= \sum_{i=1}^8 -2(n_{P_i,Q_i} - n_{P_i,Q_i} ) = 0.
		\end{align*}
		Therefore, $L\cdot L$ is not equivalent to $2$ mod $4$, so the map on Brauer groups is injective by Beauville's result. 
	\end{proof}	

	\begin{rem}\label{rem: SizeofTransBr}
		Suppose now that $k$ is a number field contained in $\Q_p$ for some prime $p$ and the elliptic curves $E_1$, $E_2$ are not isogenous after base change to $\overline{k}$. By \cite[(3.3) in Section 3]{Beauville}, $\Br(S_\C) \cong \Z/2\Z$. We have by \cite[Proposition 4.2.2]{Colliot-TheleneSkorobogatov} that $\Br(S_\C) \cong \Br(S_{\overline{\Q}}) \cong \Br(S_{\overline{\Q}_p})$. Therefore, the above lemma implies that over the basefield $k$, the map $\frac{\Br(S)}{\Br_1(S)} \to \frac{\Br(X)}{\Br_1(X)}$ is injective. Also, since $\frac{\Br(S)}{\Br_1(S)} \into \Br(S_{\overline{\Q}}) \cong \Z/2\Z$, we get that $\frac{\Br(S)}{\Br_1(S)}$ is either trivial or order 2.
	\end{rem}
	
	We now need some results about the relationship between the Brauer group of the abelian surface $A$ and the associated Kummer surface $X$. This is studied in \cite{SkorobogatovZarhin}, and we recall some results from that paper which we will use. 

    First let us set some notation. By the assumption that $E_1$ and $E_2$ have rational 2-torsion, they have Weierstrass equations of the form 
    \[
        E_1: y^2 = x(x-a)(x-b)
    \]
    \[
        E_2: v^2 = u(u-a')(u-b').
    \]
    We also assume from here on that they are not isogenous over the algebraic closure $\overline{k}$. Let $\mu \in \{0, a, b\}$ and $\eta \in \{0, a', b'\}$. Then define the following quaternion algebras over the function field $k(A)$:
	\[
	\mathcal{A}_{\mu, \eta} := \left(\frac{x(x-a)(x-b)}{x-\mu},\frac{u(u-a')(u-b')}{u-\eta}\right).
	\]
	
	These define elements of $\Br(A)$ which can be shown using the exact sequence from Theorem \ref{thm: BrPurity} or an explanation can be found in \cite[pg 15]{SkorobogatovZarhin}.
	
	\begin{rem}
		We use a different notation than is used in \cite{SkorobogatovZarhin}. The change in notation amounts to relabeling the Azumaya algebras defined above. 
	\end{rem}
	
	\begin{lem}\label{lem:PresBrA}\cite{SkorobogatovZarhin}
		The map $h: E_1[2]\otimes E_2[2] \to \Br(A)[2]/\Br_1(A)[2]$ defined on simple tensors by $h((\mu,0)\otimes (\eta, 0)) = \mathcal{A}_{\mu,\eta}$ is an isomorphism.
	\end{lem}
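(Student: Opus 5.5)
The natural route is to compute $\Br(\overline{A})[2]$ geometrically and then compare with the $k$-rational part. Since $\overline{k}$ is algebraically closed of characteristic $0$ and $\Br(\overline{A})$ is divisible modulo its finite part, the Kummer sequence gives
\[
0 \to \mathrm{NS}(\overline{A})/2 \to H^2_{\et}(\overline{A},\mu_2) \to \Br(\overline{A})[2] \to 0 .
\]
We also have $H^2_{\et}(\overline{A},\mu_2) = \wedge^2 H^1(\overline{A},\mu_2)$. By Künneth,
\[
\wedge^2\bigl(H^1(\overline{E}_1)\oplus H^1(\overline{E}_2)\bigr) = \wedge^2H^1(\overline{E}_1)\oplus \wedge^2 H^1(\overline{E}_2)\oplus H^1(\overline{E}_1)\otimes H^1(\overline{E}_2).
\]
Because $E_1,E_2$ are geometrically non-isogenous, $\mathrm{NS}(\overline{A}) = \Z\,\overline{E}_1\times 0 \oplus \Z\, 0\times\overline{E}_2$. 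Its image mod $2$ is exactly the two $\wedge^2$ summands. Hence $\Br(\overline{A})[2]\cong H^1(\overline{E}_1,\mu_2)\otimes H^1(\overline{E}_2,\mu_2)\cong E_1[2]\otimes E_2[2]$ via the Weil pairing. This group has order $16$, and with rational $2$-torsion the Galois action on it is trivial.

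Next I would identify $h$ with this isomorphism. The class of $\frac{x(x-a)(x-b)}{x-\mu}$ in $k(E_1)^\times/k(E_1)^{\times2}$ is unramified on $E_1$. It corresponds under Kummer theory to an element of $H^1(E_1,\mu_2)$ restricting geometrically to the $2$-torsion class associated to $(\mu,0)$: its divisor is $2\bigl((a,0)+(b,0)+\ldots\bigr)$ minus twice the divisor of $x-\mu$ up to normalization, so it equals $2\cdot(\text{divisor of a }2\text{-torsion point})$. The quaternion algebra $\mathcal{A}_{\mu,\eta}$ is then the cup product $p_1^*\chi_\mu\cup p_2^*\chi_\eta$ in $H^2(A,\mu_2)$ mapped to $\Br(A)[2]$. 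Its image in $\Br(\overline{A})[2]$ is the Künneth component $\chi_\mu\otimes\chi_\eta$. This shows that $h$ is a well-defined homomorphism, bilinear in $(\mu,\eta)$ since cup products are bilinear. It also shows that the composite
\[
E_1[2]\otimes E_2[2]\to \Br(A)[2]/\Br_1(A)[2]\to \Br(\overline{A})[2]
\]
is the isomorphism above. One should check that the three nonzero $\mu$ give a group structure: the product of the three functions is a square, matching $(0,0)+(a,0)+(b,0)=0$.

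Finally, $\Br(A)[2]/\Br_1(A)[2]$ injects into $\Br(\overline{A})[2]$. This holds because an element of $\Br(A)[2]$ lying in $\Br_1(A)$ is by definition killed in $\Br(\overline{A})$, so taking $\Br_1(A)[2]=\Br_1(A)\cap\Br(A)[2]$ the quotient injects. Given this injection and the fact that the composite is an isomorphism, $h$ is injective. Its image is also all of the target, since the target sits inside a group of order $16$ that $h$ already surjects onto. So $h$ is an isomorphism.

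The main obstacle is the middle step: confirming that the algebras $\mathcal{A}_{\mu,\eta}$ are unramified on all of $A$ (a residue computation along $x=\mu$, $u=\eta$ and at infinity via Theorem \ref{thm: BrPurity}) and pinning down that their geometric classes are the Künneth cup products of the correct $2$-torsion characters. Alternatively, one could cite \cite{SkorobogatovZarhin} for exactly this identification.
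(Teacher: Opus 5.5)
Your argument is correct, but it takes a different route from the paper.

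\textbf{The paper's route.} The paper imports the two substantive facts from Skorobogatov--Zarhin: that the classes $\mathcal{A}_{\mu,\eta}$ generate $\Br(A)[2]/\Br_1(A)[2]$, and that this group has order $16$ (their Proposition~3.3). It then checks by hand only that the assignment on pure tensors is additive. That check uses $(r,s)\otimes(t,s)=(rt,s)$, $(r^2,s)=1$ and $y^2=\prod_i(x-\mu_i)$, which is the same relation as your remark that the three functions multiply to a square. A surjective homomorphism between groups of equal order is then an isomorphism.

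\textbf{Your route.} You derive both facts yourself. From the Kummer sequence, K\"unneth and $\mathrm{NS}(\overline{A})\cong\Z^2$ you get $\Br(\overline{A})[2]\cong H^1(\overline{E}_1,\mu_2)\otimes H^1(\overline{E}_2,\mu_2)$. The injection $\Br(A)[2]/\Br_1(A)[2]\hookrightarrow\Br(\overline{A})[2]$ then bounds the target by $16$. Identifying $h$ with a cup product of Kummer characters shows that the composite $E_1[2]\otimes E_2[2]\to\Br(\overline{A})[2]$ is bijective, and the sandwich finishes the proof.

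\textbf{What each buys.} Your proof is self-contained and makes $h$ well defined through bilinearity of cup product. As a by-product it shows that $\Br(A)[2]/\Br_1(A)[2]\to\Br(\overline{A})[2]$ is an isomorphism, which the paper later cites from Skorobogatov--Zarhin in the proof of Theorem~\ref{thm: CalcBrS}. It is also close in spirit to Claim~(*) in Lemma~\ref{lem: SZCanonicalIso}. The paper's route is shorter but relies on the literature for the substantive input.

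\textbf{Two small corrections.}

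First, the ``main obstacle'' you flag is already handled by your own argument. Modulo squares $\frac{x(x-a)(x-b)}{x-\mu}\equiv x-\mu$, and its divisor $2\bigl((\mu_2,0)+(\mu_3,0)-2O_{E_1}\bigr)$ is even. So its Kummer class $\chi_\mu$ extends to $H^1(E_1,\mu_2)$ and maps to $[(\mu,0)-O_{E_1}]\in\Pic(\overline{E}_1)[2]$. This is the precise version of your garbled divisor sentence. Hence $p_1^*\chi_\mu\cup p_2^*\chi_\eta$ is a class on all of $A$ whose restriction to $k(A)$ is $\mathcal{A}_{\mu,\eta}$. This uses the identification of a quaternion algebra $(f,g)$ with the cup product of Kummer classes, which the paper also uses in the proof of Lemma~\ref{lem: SZCanonicalIso}. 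No separate residue computation is needed.

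Second, the short exact Kummer sequence only needs $\Pic^0(\overline{A})$ to be divisible, so that $\Pic(\overline{A})/2=\mathrm{NS}(\overline{A})/2$. Divisibility of $\Br(\overline{A})$ plays no role.
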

	\begin{proof}
		Skorobogatov and Zarhin show that the $\mathcal{A}_{\mu,\eta}$ generate the group on the right \cite[pgs 14,15]{SkorobogatovZarhin}. Moreover, these groups have the same order in this case \cite[Proposition 3.3]{SkorobogatovZarhin}. So it suffices to show that $h$ defines a group homomorphism. That is, we must check that 
		\[
		\mathcal{A}_{\mu_1,\eta}\otimes_{k(A)} \mathcal{A}_{\mu_2,\eta} =  \mathcal{A}_{\mu_3,\eta}
		\]
		and
		\[
		\mathcal{A}_{\mu,\eta_1}\otimes_{k(A)} \mathcal{A}_{\mu,\eta_2} =  \mathcal{A}_{\mu,\eta_3}
		\]
		for $\mu_i$ distinct elements of $\{0,a,b\}$ and $\eta_i$ distinct elements of $\{0,a',b'\}$. Recall that for $r,s,t \in k(A)^\times$, we have the following relations in the 2-torsion of the Brauer group $\Br(k(A))[2]$: $(r,s)\otimes_{k(A)}(t,s) = (rt, s)$ and $(r^2, s) = 1$. 
        To prove the first equality of Azumaya algebras above, we have that for $\eta \in \{0, a', b'\}$ and $\mu_1, \mu_2, \mu_3$ distinct elements of $\{0,a,b\}$,
		\begin{align*}
			&\left(\frac{x(x-a)(x-b)}{x-\mu_1},\frac{u(u-a')(u-b')}{u-\eta}\right)\otimes_{k(A)} \left(\frac{x(x-a)(x-b)}{x-\mu_2},\frac{u(u-a')(u-b')}{u-\eta}\right) \\
			&= \left((x-\mu_2)(x-\mu_3),\frac{u(u-a')(u-b')}{u-\eta}\right)\otimes_{k(A)} \left((x-\mu_1)(x-\mu_3),\frac{u(u-a')(u-b')}{u-\eta}\right) \\
			&=  \left((x-\mu_1)(x-\mu_2)(x-\mu_3)^2,\frac{u(u-a')(u-b')}{u-\eta}\right)\\
			&=  \left((x-\mu_1)(x-\mu_2),\frac{u(u-a')(u-b')}{u-\eta}\right) = \mathcal{A}_{\mu_3, \eta}.
		\end{align*}
		And the second equality follows by a similar computation. 
	\end{proof}	
	
	Recall that we denote by $\iota: A \to A$ the negation involution on the abelian surface. Then the functions 
	\[
	\frac{x(x-a)(x-b)}{x-\mu}, \frac{u(u-a')(u-b')}{u-\eta}
	\]
	are fixed by the action of $\iota$ on $k(A)$. Therefore, the Brauer classes $\mathcal{A}_{\mu, \eta}$ define elements of $\Br(k(X))$. We also have the following result which establishes when algebras lift to Brauer classes on the Kummer surface $X$.
	
	\begin{lem}\label{lem:BrClassesDescendingToKum}(\cite{SkorobogatovZarhin} Lemma 3.6)
		The sequence 
		\[
		0 \to \Br(X)[2]/\Br_1(X)[2] \to \Br(A)[2]/\Br_1(A)[2] \xrightarrow{\Phi} k^\times/(k^\times)^2
		\]
		is exact where the map $\Phi$ is defined by
		\begin{align*}
			&\Phi(\mathcal{A}_{0,0}) = (1, ab, a'b', -aa') \\
			&\Phi(\mathcal{A}_{a,0}) = (a'b', aa', 1, a(a-b)) \\
			&\Phi(\mathcal{A}_{0,a'}) = (ab, 1, aa', a'(a'-b')) \\
			&\Phi(\mathcal{A}_{a,a'}) = (-aa', a'(a'-b'), a(a-b), 1) \\
		\end{align*}
		and then extended to a group homomorphism.
	\end{lem}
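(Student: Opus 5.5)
We need to prove Lemma \ref{lem:BrClassesDescendingToKum}, which is cited from \cite{SkorobogatovZarhin}. The plan has three parts: injectivity of the first map, then the description of which $\mathcal{A}_{\mu,\eta}$ come from $X$ as the kernel of $\Phi$, and finally the computation of $\Phi$ on the four generators.

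\textbf{Injectivity.} Let $\tilde{A}$ be the blowup of $A$ at $A[2]$ and $\tilde{A}\to X$ the resulting double cover. Pullback along this cover, followed by the birational invariance of $\Br$ for $\tilde{A}\to A$, gives a map $\Br(X)\to\Br(A)$. Over $\overline{k}$ this map is injective on $2$-torsion of the transcendental part, since $\Br(\overline{X})\cong\Br(\overline{A})$ for Kummer surfaces when $2$ is invertible. A class of $\Br(X)[2]$ that becomes algebraic on $A$ is therefore algebraic on $X$, so $\Br(X)[2]/\Br_1(X)[2]\to\Br(A)[2]/\Br_1(A)[2]$ is injective.

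\textbf{Exactness at the middle.} By Lemma \ref{lem:PresBrA}, every class in $\Br(A)[2]/\Br_1(A)[2]$ is represented by a product of the $\mathcal{A}_{\mu,\eta}$. These are $\iota$-invariant quaternion algebras over $k(A)^\iota=k(X)$, so each product defines an element $\beta\in\Br(k(X))$. By the purity sequence of Theorem \ref{thm: BrPurity}, $\beta$ lies in $\Br(X)$ exactly when all its residues along divisors of $X$ vanish.

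Away from the $16$ exceptional curves $D_{P,Q}$, the residues vanish because $\beta$ pulled back to $A$ is unramified and $A\setminus A[2]\to X\setminus\bigcup D_{P,Q}$ is étale. So the only obstructions are the residues along the $D_{P,Q}$. Each $D_{P,Q}\cong\mathbb{P}^1_k$, and the residue of a quaternion algebra along it is a constant class in $H^1(k,\Z/2)=k^\times/(k^\times)^2$.

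The residue is not the only freedom, however: we may modify $\beta$ by an element of $\Br_1$, and in particular by algebras $(c,g)$ with $c\in k^\times$ and $g$ a function. This gives the corrected statement: the class of $\beta$ lifts iff the vector of residues at the $D_{P,Q}$ lies in the image of such modifications. That image is cut down to a quotient by the $\tilde{\Phi}$-type combination, with the result landing in a product of copies of $k^\times/(k^\times)^2$, here four coordinates. I would follow Skorobogatov--Zarhin and identify the obstruction with the four residues at the points $(P,Q)$ with $P\in\{0,a\}$ and $Q\in\{0,a'\}$. The remaining residues are determined by these via the relation coming from $\Br_1$ and the Galois action.

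\textbf{Computing $\Phi$.} To evaluate the residue of $(f,g)$ along $D_{P,Q}$, I would use local coordinates at a $2$-torsion point. Near $(P,Q)$ the function $x-\mu$ vanishes to order $2$ when $\mu=x(P)$, so $f/(x-\mu)$ is a unit whose value at $P$ is $\prod_{\mu'\neq\mu}(\mu-\mu')$, and similarly for $g$. On the blowup, with $y\approx t$ and $v\approx s$, the functions $f,g$ are $y^2$ times a unit, and the tame symbol along the exceptional curve is the product of these unit values, up to the sign coming from $(y^2/s^2\cdot\ldots)$.

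For example, $\mathcal{A}_{0,0}$ at $(0,0)$ gives $(ab,a'b')$-type factors whose tame symbol is trivial. At $(a,0)$ the value is $a(a-b)$ paired with $a'b'$, and the symbol picks out $a'b'$, matching the listed entries. Doing these $4\times4$ evaluations yields the stated table, and multiplicativity extends $\Phi$ to a homomorphism.

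\textbf{Main obstacle.} The delicate step is the residue computation on the exceptional curves, where both entries of the symbol vanish to even order, together with precisely identifying which residue vectors are killed by $\Br_1(X)$. I expect to need the exact local form of the Kummer blowup to get the signs such as $-aa'$ right.
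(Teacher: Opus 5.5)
Your injectivity step is fine, given $\Br(\overline X)\cong\Br(\overline A)$. A careful tame-symbol computation also does reproduce the table: the four coordinates of $\Phi$ are the residues of the $\iota$-invariant representative $\alpha_X\in\Br(k(X))$ along $D_{P,Q}$ for $P\in\{(0,0),(a,0)\}$ and $Q\in\{(0,0),(a',0)\}$.

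The real problem is exactness in the middle, which you assert rather than prove, and the mechanism you offer for it is wrong. The four residues are not invariants of the class modulo $\Br_1$. For $c\in k^\times$, the algebra $(c,x(x-a))$ lies in $\Br(k(X))$, is unramified off $D=\bigcup D_{P,Q}$, pulls back into $\Br_1(A)$, and multiplies all four coordinates by $c$. The other twelve residues are also not ``determined via $\Br_1$ and the Galois action.''

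What actually makes four values suffice is that the residue of $\mathcal{A}_{\mu,\eta}$ along $D_{P,Q}$ is \emph{bilinear} in $(P,Q)\in E_1[2]\times E_2[2]$. Let $e_\mu(P)\in\{0,1\}$ be the Weil pairing with $(\mu,0)$, let $\kappa_\mu$ be the $(x-\mu)$-component of the $2$-descent map restricted to $E_1[2]$, and let $e_\eta,\kappa'_\eta$ be the analogues on $E_2$. Then the residue equals $(-1)^{e_\mu(P)e_\eta(Q)}\kappa_\mu(P)^{e_\eta(Q)}\kappa'_\eta(Q)^{e_\mu(P)}$ in $k^\times/k^{\times 2}$. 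So $\Phi(\alpha)=0$ forces all sixteen residues of $\alpha_X$ to vanish, and $\alpha_X\in\Br(X)[2]$. This gives $\ker\Phi\subseteq\operatorname{im}$ with no correction needed.

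For that step you also need the entries (functions of $x$, resp.\ $u$) to have even valuation along every non-exceptional curve of $X$. Your reason, ``unramified on $A$ and $A\setminus A[2]\to X\setminus D$ is étale,'' is not valid: a residue equal to the class of the quadratic extension of residue fields dies upstairs.

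The inclusion $\operatorname{im}\subseteq\ker\Phi$ is the substantive part, and your proposal contains no argument for it. Suppose some $\alpha'\in\Br(X)[2]$ maps to the class of $\alpha$. The difference $\gamma=\alpha'-\alpha_X$ is only known to satisfy $\pi^*\gamma\in\Br_1(A)$. So $\gamma$ ranges not only over descents of $\Br_1(A)[2]$, but also over the kernel of $\Br(k(X))\to\Br(k(A))$. That kernel consists of classes $(x(x-a)(x-b),h)$ with $h\in k(X)^\times$, which vanish on $A$ yet can have nontrivial residues along every $D_{P,Q}$. You must show that no such $\gamma$ can cancel the residues of $\alpha_X$.

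One way to do this:
\begin{itemize}
\item By purity, $\Br(\overline X\setminus\overline D)=\Br(\overline X)$, since $H^1(\mathbb{P}^1_{\bar k},\mathbb{Q}/\mathbb{Z})=0$. Combined with $\Br(\overline X)\hookrightarrow\Br(\overline A)$, this shows $\gamma\in\Br_1(X\setminus D)$.
\item Next, $\Br_1(X\setminus D)/\Br_0\cong H^1(k,\Pic(\overline X\setminus\overline D))=\Hom(\Gal(\bar k/k),K/\langle D_{P,Q}\rangle)$, where $K$ is the Kummer lattice.
\item Hence the residue vector of $\gamma$ is an affine-linear $k^\times/k^{\times2}$-valued function on $A[2]$, spanned by the indicators of the even eights and of all sixteen points.
\item A function on $E_1[2]\times E_2[2]$ that is both bilinear and affine is zero: evaluate at $(O,O)$, $(P,O)$ and $(O,Q)$.
\item Therefore the residues of $\alpha_X$ vanish, and $\Phi(\alpha)=0$.
\end{itemize}
Without an argument of this kind, or a precise appeal to Skorobogatov--Zarhin, the middle exactness remains unproved.
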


    Recall that for an Enriques surface defined over $k$, the transcendental Brauer group $\Br(S)/\Br_1(S)$ is either trivial or $\Z/2\Z$ (see Remark \ref{rem: SizeofTransBr}). We now want to determine precisely when this group is nontrivial. Note that the field $k(A)$ is generated by the rational functions $x,y,u,v$ over $k$. Since the action of $\iota$ sends $y$ to $-y$, $v$ to $-v$ and fixes $x$ and $u$, we have that $x, u \in k(X) = k(A)^\iota$. Let $P = (\mu, 0)$ and $Q = (\eta,0)$ where $\mu \in \{0,a, b\}$ and $\eta \in \{0, a', b'\}$. Let $\mu_2, \mu_3$ denote the other elements that are not equal to $\mu$ and similarly $\eta_2, \eta_3$ are the elements not equal to $\eta$. Then 
    \[
        \tilde{\sigma}_{P,Q}^*(x) = \sigma_{P,Q}^*(x)= \mu + \frac{(\mu - \mu_2)(\mu-\mu_3)}{x-\mu}
    \]
    where the second equality follows by the formula for the translation by the point $(\mu, 0)$ on the elliptic curve $E_1$. Similarly, 
    \[
        \tilde{\sigma}_{P,Q}^*(u) =\eta + \frac{(\eta - \eta_2)(\eta-\eta_3)}{u-\eta}.
    \]
    Note this implies 
    \[
        \tilde{\sigma}_{P,Q}^*\left(\frac{(\mu - \mu_2)(\mu-\mu_3)}{x-\mu}\right) = x-\mu
    \]
    and similarly for $u$ which will be relevant in the proof of the following theorem.
    
    The proof of the following theorem is similar to some of the arguments in section 3 of \cite{SkorobogatovZarhin} in the sense that we define Brauer classes in the function field of the surface $S_{P,Q}$ and then determine when these lift to Brauer classes on $S_{P,Q}$ by studying their ramification.

	\begin{thm}\label{thm: CalcBrS}
		Let $S := S_{P,Q}$ be the Enriques surface of Lieberman type obtained from $X$ by quotienting by $\tilde{\sigma}_{P,Q}$. Let $P=(\mu, 0)$ and $Q = (\eta, 0)$ for $\mu \in \{0,a,b\}$ and $\eta \in \{0, a',b'\}$. Then $\Br(S)/\Br_1(S)$ is nontrivial if and only if $\mathcal{A}_{\mu, \eta}$ is in the image of the map
		\[
		\Br(X)[2]/\Br_1(X)[2] \to \Br(A)[2]/\Br_1(A)[2].
		\]
		Moreover, in this case, the image of the nontrivial element under the composition 
		\[
		\Br(S)/\Br_1(S) \to \Br(X)[2]/\Br_1(X)[2] \to \Br(A)[2]/\Br_1(A)[2]
		\]
		is $\mathcal{A}_{\mu, \eta}$. 
	\end{thm}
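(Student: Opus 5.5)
We construct an explicit quaternion algebra on $k(S)$ whose pullback to $k(A)$ is $\mathcal{A}_{\mu,\eta}$, and use purity (Theorem \ref{thm: BrPurity}) to decide when it is unramified on $S$.

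\textbf{The candidate class.} Write $c_\mu := (\mu-\mu_2)(\mu-\mu_3)$ and $c_\eta := (\eta-\eta_2)(\eta-\eta_3)$. The function $f := \frac{x(x-a)(x-b)}{x-\mu} = (x-\mu_2)(x-\mu_3)$ lies in $k(X)$. By the formula $\tilde{\sigma}^*(x-\mu) = c_\mu/(x-\mu)$, its pullback is
\[
\tilde{\sigma}^*(f) = f\cdot c_\mu^2/(x-\mu)^2 .
\]
Hence $f$ is $\tilde\sigma$-invariant up to squares, but it need not itself lie in $k(S)$. We therefore look for $\tilde\sigma$-invariant representatives. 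Natural candidates are norms such as $x-\mu + c_\mu/(x-\mu)$, or, more directly, the function $y^2/(x-\mu)^2$ adjusted by $u$-terms. The cleanest route is to take
\[
F := f\cdot g\,(x-\mu)^{-1}(u-\eta)^{-1},
\]
or an analogous product mixing the $x$- and $u$-coordinates, so that both entries become genuinely $\tilde\sigma$-invariant modulo a common square factor. Concretely, I would work with the algebra $\mathcal{B} := (\alpha, \beta)$ over $k(S)$, where $\alpha,\beta \in k(X)^{\tilde\sigma}$ are chosen with $\alpha \equiv f$ and $\beta \equiv g := (u-\eta_2)(u-\eta_3)$ modulo $k(X)^{\times 2}$. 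One checks that $\mathcal{B}$ restricts to $\mathcal{A}_{\mu,\eta}$ in $\Br(k(A))$.

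\textbf{Ramification and the descent criterion.} Suppose $\mathcal{A}_{\mu,\eta}$ comes from $\Br(X)$, say $\mathcal{A}_{\mu,\eta} + \beta_0$ with $\beta_0$ algebraic. The key point is to show that the residues of $\mathcal{B}$ along the divisors of $S$ are controlled by those of its pullback to $X$. The map $X \to S$ is étale, so residues pull back injectively on residue fields up to the degree-$2$ extensions. The only possibly ramified divisors are images of the curves $x=\mu_i$, $u=\eta_j$, $x=\infty$, $u=\infty$ and the exceptional curves. The computation of residues should mirror the one in \cite[Section 3]{SkorobogatovZarhin}, and the constants appearing should be exactly the components of $\Phi(\mathcal{A}_{\mu,\eta})$ from Lemma \ref{lem:BrClassesDescendingToKum}, up to products with the constants $c_\mu, c_\eta$. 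If $\Phi(\mathcal{A}_{\mu,\eta})$ is trivial, then after twisting $\mathcal{B}$ by a constant algebra $(c,\cdot)$ if necessary, all residues vanish. Therefore $\mathcal{B}\in\Br(S)$. Its image in $\Br(X)/\Br_1(X)$ maps to $\mathcal{A}_{\mu,\eta}\ne 0$, which proves nontriviality together with the ``moreover'' statement.

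\textbf{Converse.} Suppose $\Br(S)/\Br_1(S)\neq 0$, and let $\gamma$ be its generator. By Remark \ref{rem: SizeofTransBr}, $\pi^*\gamma$ is a nonzero element of $\Br(X)/\Br_1(X)$. Since $\gamma$ is $2$-torsion modulo $\Br_1$ and $\Br(\overline{X})$ has no problematic torsion, we may take it in $\Br(X)[2]$ modulo algebraic classes. It is invariant under $\tilde\sigma^*$, so its image in $\Br(A)[2]/\Br_1(A)[2] \cong E_1[2]\otimes E_2[2]$ (Lemma \ref{lem:PresBrA}) is a nonzero $\sigma^*$-invariant element. Moreover, geometrically, $\Br(\overline{S}) \cong \Z/2$ maps to a single specific class. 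Over $\overline{k}$ the above construction exhibits this class as $\mathcal{A}_{\mu,\eta}$, because the construction works unconditionally when all constants are squares. By injectivity (Proposition \ref{prop: BrStoBrXgeoinj}), $\pi^*\gamma$ must then equal $\mathcal{A}_{\mu,\eta}$ geometrically, and hence in $\Br(A)[2]/\Br_1(A)[2]$. Since $\mathcal{A}_{\mu,\eta}$ is then the image of $\pi^*\gamma\in\Br(X)$, the condition holds.

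\textbf{Main obstacle.} The step I expect to be hardest is choosing $\tilde\sigma$-invariant representatives $\alpha,\beta$ and computing their residues on $S$ precisely. In particular, one must show that the obstruction to unramifiedness on $S$ is exactly $\Phi(\mathcal{A}_{\mu,\eta})$, and not something stronger involving $c_\mu, c_\eta$. I would first try $\alpha = (x-\mu_2)(x-\mu_3)+\tilde\sigma^*(\cdot)$-type norm expressions. If those prove messy, I would instead argue abstractly. Any class in $\Br(X)$ lifting $\mathcal{A}_{\mu,\eta}$ is $\tilde\sigma$-invariant modulo $\Br_1(X)$. The Hochschild--Serre sequence for the étale double cover $X\to S$ has a cokernel obstruction lying in $H^2(\Z/2, \cdot)$-type groups, and this obstruction vanishes because $\Br(\overline S)\to\Br(\overline X)$ is injective.
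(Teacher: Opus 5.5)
\paragraph{Verdict.} Your converse is essentially the paper's argument: the generator of $\Br(\overline{S})\cong\Z/2$ maps to $\mathcal{A}_{\mu,\eta}$ in $\Br(\overline{A})$, and $\Br(A)[2]/\Br_1(A)[2]\to\Br(\overline{A})[2]$ is injective. Over $\overline{k}$ the paper's explicit class suffices for this. The forward direction, however, has a genuine gap: no element of $\Br(S)$ is ever actually produced.

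\paragraph{The invariant representative.} Your invariance formula is wrong. Since $\tilde\sigma^*(x-\mu_i)=(\mu-\mu_i)(x-\mu_j)/(x-\mu)$ for $\{i,j\}=\{2,3\}$, one gets $\tilde\sigma^*f=c_\mu f/(x-\mu)^2$, not $c_\mu^2f/(x-\mu)^2$. So a $\tilde\sigma$-invariant $\alpha\equiv f$ modulo $k(X)^{\times 2}$ exists only when $c_\mu\in k^{\times 2}$, and similarly for $c_\eta$. You must first extract this from $\Phi(\mathcal{A}_{\mu,\eta})=0$ (the paper's Step 1) and then write down an explicit invariant representative.

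\paragraph{The residue step.} This is exactly where descent from $X$ to $S$ can fail. If a curve $Z\subset S$ has irreducible preimage $D\subset X$, then $H^1(k(Z),\Q/\Z)\to H^1(k(D),\Q/\Z)$ kills the quadratic character of $k(D)/k(Z)$. Hence unramifiedness of the pullback on $X$ does not give unramifiedness on $S$. The relevant ambiguity is by algebras $(\chi,h)$, where $\chi\in H^1(S,\Z/2)$ is the class of $X\to S$. So ``twisting by a constant algebra'' does not address it.

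Your Hochschild--Serre fallback does not address it either. Injectivity of $\Br(\overline S)\to\Br(\overline X)$ is a statement about a kernel. The obstruction to descending a $\tilde\sigma$-invariant class of $\Br(X)$ is governed by the differentials into $H^2(\Z/2,\Pic X)$ and $H^3(\Z/2,k^\times)\cong\{\pm1\}$, and you give no reason for these to vanish.

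\paragraph{How the paper closes the gap.} The function $f_\mu=(x-\mu)+c_\mu/(x-\mu)-2\sqrt{c_\mu}=\bigl((x-\mu)-\sqrt{c_\mu}\bigr)^2/(x-\mu)$ is manifestly invariant. It has even valuation along every prime divisor of $A$, because $\dv(x-\mu)=2\,(\mu,0)\times E_2-2\,O_{E_1}\times E_2$. Since $A\setminus A[2]\to W$ is \'etale, the residues vanish outright on $W$. The only remaining curves are the eight $H_j$, which split in $X$ with $k(H_j)\cong k(L_i)$, so their residues are detected on $X$.

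\paragraph{Your normalization fits this argument.} Take $\alpha=f_\mu\,y^2/(x-\mu)^2$ and $\beta=g_\eta\,v^2/(u-\eta)^2$. Here $y^2/(x-\mu)^2=f/(x-\mu)$ is $\tilde\sigma$- and $\iota$-invariant and has even divisor on $A$.

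It is in fact the better normalization at the exceptional curves. Because $y^2\notin k(X)^{\times 2}$, the paper's $(f_\mu,g_\eta)$ pulls back to $(x-\mu,u-\eta)$ in $\Br(k(X))$, not to $\mathcal{A}_{\mu,\eta}|_{k(X)}$. For $\mu=\eta=0$, the algebra $(x,u)$ has residue $a$ along the exceptional curve over $((a,0),(0,0))$. This need not be a square when $\Phi(\mathcal{A}_{0,0})=0$, so the paper's Step 3 ``by assumption'' does not hold for its own class.

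With your $\alpha,\beta$, the pullback is $\mathcal{A}_{\mu,\eta}|_{k(X)}$ itself. Its residues along the sixteen exceptional curves all vanish exactly when $\Phi(\mathcal{A}_{\mu,\eta})=0$; for $\mu=\eta=0$ they are $ab$, $a'b'$, $-aa'$, $-ab'$, $-a'b$, $-bb'$ and trivial ones. Note that you need this explicit residue computation, not merely the statement of Lemma \ref{lem:BrClassesDescendingToKum} modulo $\Br_1$.
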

	
	\begin{proof}
		First, let us assume that $A_{\mu, \eta}$ is in the image of the map 
        \[
            \Br(X)[2]/\Br_1(X)[2] \to \Br(A)[2]/\Br_1(A)[2].
        \]
        Let $\mu_2, \mu_3$ denote the two elements of the set $\{0,a,b\}$ which are not equal to $\mu$ and $\eta_2, \eta_3$ denote the two elements of the set $\{0,a',b'\}$ which are not equal to $\eta$. Define the following two elements of $\overline{k(A)}$, the algebraic closure of the function field of the abelian surface:
		\[
		f_\mu := (x-\mu) + \frac{(\mu - \mu_2)(\mu-\mu_3)}{(x-\mu)} - 2\sqrt{(\mu - \mu_2)(\mu-\mu_3)} 
		\]
		\[
		g_\eta := (u-\eta) + \frac{(\eta - \eta_2)(\eta - \eta_3)}{(u-\eta)} - 2\sqrt{(\eta - \eta_2)(\eta - \eta_3)}
		\]
		where the square roots denote some fixed square root in the algebraic closure. Let us consider the quaternion algebra $(f_\mu, g_\eta) \in \Br(\overline{k(A)})$. Our goal is to show that $(f_\mu, g_\eta)$ is in the image of the natural map 
        \[
            \Br(S) \to \Br(k(S)) \to \Br(k(A)) \to \Br(\overline{k(A)}).
        \]
        We do this in several steps. 
        \\ \underline{Step 1: $(f_\mu, g_\eta) \in \Br(k(S))$:} \\
        We need to show that $f_\mu, g_\eta \in k(S)$. By Lemma \ref{lem:BrClassesDescendingToKum} and by the assumption on $\mathcal{A}_{\mu,\eta}$, we have that $\Phi(\mathcal{A}_{\mu, \eta}) =0$. It is easy to check by the definition of $\Phi$ and examining each case that this implies $(\mu - \mu_2)(\mu - \mu_3)$ and $(\eta-\eta_2)(\eta-\eta_3)$ are squares in $k$. For example, consider the case of $\mu = 0$, $\eta = 0$. Then by the definition of $\Phi$, $\Phi(\mathcal{A}_{0,0})$ implies $ab = (0 -a)(0-b)$ is a square and $a'b' = (0 - a')(0-b')$ is a square. Therefore, $f_\mu, g_\eta \in k(A)$. The action induced by $\iota : A \to A$ on $k(A)$ fixes $f_\mu, g_\eta$ since they are functions of $x$ and $u$ respectively. Therefore, $f_\mu, g_\eta$ are elements of $k(X)$. Moreover, the induced action of $\tilde{\sigma}_{P,Q}$ on $k(X)$ fixes $f_\mu, g_\eta$:
        \begin{align*}
            \tilde{\sigma}_{P,Q}^*(f_\mu) &= \sigma_{P,Q}^*\left((x-\mu) + \frac{(\mu - \mu_2)(\mu-\mu_3)}{(x-\mu)} - 2\sqrt{(\mu - \mu_2)(\mu-\mu_3)}\right) \\
            &= \sigma_{P,Q}^*(x-\mu) + \sigma_{P,Q}^*\left(\frac{(\mu - \mu_2)(\mu-\mu_3)}{(x-\mu)}\right) - 2\sqrt{(\mu - \mu_2)(\mu-\mu_3)}\\
            &= (x-\mu) + \frac{(\mu - \mu_2)(\mu-\mu_3)}{(x-\mu)} - 2\sqrt{(\mu - \mu_2)(\mu-\mu_3)} = f_\mu
        \end{align*}
        and the same argument applies for $g_\eta$. Therefore, $f_\mu, g_\eta \in k(S)$ which completes step 1.
        \\\underline{Step 2:}\\
        Let $U := A\setminus A[2]$ be the open set of $A$ which is the complement of the sixteen 2-torsion points of $A$. Then we have a map $U \to S$, and let the open $W \subset S$ be the image. For this step we want to show that $(f_\mu, g_\eta)$ is in the image of $\Br(W) \to \Br(k(S))$. The map $\psi: U \to W$ is unramified, and Theorem \ref{thm: BrPurity} implies that we get the following commutative diagram with exact rows
		\[
		\begin{tikzcd}
			0 \arrow{r} & \Br(U) \arrow{r} & \Br(k(A)) \arrow{r} & \bigoplus_{D \in U^{(1)}} H^1(k(D), \Q/\Z) \\
			0 \arrow{r} & \Br(W) \arrow{r} \arrow{u}{\psi^*} & \Br(k(S)) \arrow{r}\arrow{u} & \bigoplus_{Z \in W^{(1)}} H^1(k(Z), \Q/\Z) \arrow{u} .
		\end{tikzcd}
		\]
		We will show that the image of $(f_\mu, g_\nu)$ under the bottom right horizontal map is $0$. Since this element is in $\Br(k(S))[2]$, we can instead consider the diagram
		\[
		\begin{tikzcd}
			0 \arrow{r} & \Br(U)[2]\arrow{r} & \Br(k(A))[2] \arrow{r} & \bigoplus_{D \in U^{(1)}} H^1(k(D), \Q/\Z)[2] \\
			0 \arrow{r} & \Br(W)[2] \arrow{r} \arrow{u} & \Br(k(S))[2] \arrow{r}\arrow{u} & \bigoplus_{Z \in W^{(1)}} H^1(k(Z), \Q/\Z)[2] \arrow{u} .
		\end{tikzcd}
		\]
		Now $H^1(k(Z), \Q/\Z)[2] \cong H^1(\Gal(k(Z)), \Z/2\Z)$ and by the Kummer exact sequence, the latter is isomorphic to $k(Z)^\times/(k(Z)^\times)^2$. Moreover, for the class of a quaternion algebra $(f,g) \in \Br(k(S))$, the image of $(f,g)$ in $k(Z)^\times/(k(Z)^\times)^2$ is given by 
		\[
		(-1)^{v_Z(f)v_Z(g)}\frac{f^{v_Z(g)}}{g^{v_Z(f)}} \in k(Z)^\times/(k(Z)^\times)^2
		\]
		where $v_Z$ denotes the valuation with respect to $Z$. Thus it suffices to show that $v_Z(f_\mu), v_Z(g_\eta)$ are even for each $Z \in W^{(1)}$. However, since $U \to W$ is unramified, for any $D \in U^{(1)}$ mapping to $Z \in W^{(1)}$, for any rational function $f$ on $S$, $v_Z(f) = v_D(f)$ where on the right hand side $f$ is considered as a rational function on $A$. Therefore, we need to check that the valuations of $f_\mu$ and $g_\eta$ for every codimension $1$ point on $A$ are even. However we note that in $k(A)$, 
		\begin{align*}
			&\frac{1}{x-\mu}\left((x-\mu) - \sqrt{(\mu - \mu_2)(\mu-\mu_3)}\right)^2 \\
            &= (x-\mu) + \frac{(\mu - \mu_2)(\mu-\mu_3)}{(x-\mu)} - 2\sqrt{(\mu - \mu_2)(\mu-\mu_3)} = f_\mu
		\end{align*}
		and 
		\begin{align*}
			&\frac{1}{u-\eta}\left((u-\eta) - \sqrt{(\eta - \eta_2)(\eta-\eta_3)}\right)^2 \\
            &= (u-\eta) + \frac{(\eta - \eta_2)(\eta-\eta_3)}{(u-\eta)} - 2\sqrt{(\eta - \eta_2)(\eta-\eta_3)} = g_\eta
		\end{align*}
        by expanding the above products. Therefore, it suffices to show that $x-\mu$, $u-\eta$ have even valuation at every codimension 1 point of $A$. To do so we compute $\dv(x-\mu)$ and $\dv(u-\eta)$. Let $\pi_1 : E_1\times E_2 \to E_1$ be the projection onto the first factor. By definition, the function $x-\mu$ in $k(A)$ is an element of $k(E_1) \subset k(A)$. Therefore, we have 
        \begin{align*}
            \dv_A(x-\mu) &= \pi_1^*(\dv_{E_1}(x-\mu)) \\
            &= \pi_1^*(2(\mu, 0) - 2 O_{E_1}) \\
            &= 2(\mu, 0)\times E_2 - 2 O_{E_1}\times E_2.
        \end{align*}
        Therefore, the valuation of $x-\mu$ is even at every codimension $1$ point of $A$. The same argument applies to $u-\eta$ as well. Therefore, $(f_\mu, g_\eta)$ defines a Brauer class of $\Br(W)$ which completes Step 2.
        \\\underline{Step 3:} \\
        We now show that $(f_\mu, g_\eta)$ is in the image of the map $\Br(S) \to \Br(W)$. Let $V \subset X$ be the open subset given by the image of $U \to X$. That is, the complement of the 16 lines given by blowing up the singularities on the quotient of the abelian surface. Denote these 16 lines by $L_{i}$ for $1 \le i \le 16$. And denote the set of their images in $S$ by $H_j$ for $1\le j\le 8$. Now by Theorem \ref{thm: BrPurity} we have the following commutative diagram with exact rows 
		\[
		\begin{tikzcd}
			0 \arrow{r} & \Br(X)[2]\arrow{r} & \Br(V)[2] \arrow{r} & \bigoplus_{1\le i \le 16}k(L_i)^\times/(k(L_i)^\times)^2 \\
			0 \arrow{r} & \Br(S)[2] \arrow{r} \arrow{u} & \Br(W)[2] \arrow{r}\arrow{u} & \bigoplus_{1\le j\le 8} k(H_j)^\times/(k(H_j)^\times)^2\arrow{u} .
		\end{tikzcd}
		\]
		We again want to show that the image of $(f_\mu, g_\eta)$ under the bottom right vertical map is $0$. We know by assumption that its image in the top right corner is $0$, so it suffices to show that the right vertical map is injective. This is true because for $L_i$ lying over $H_j$, the map $k(H_j) \to k(L_i)$ is an isomorphism. Therefore, we have that $(f_\mu, g_\eta)$ defines an element of $\Br(S)$. This completes Step 3. 
        
        By the above, $(f_\mu, g_\eta)$ is in the image of the map $\Br(S) \to \Br(A) \to \Br(k(A))$. It therefore remains to show that $(f_\mu, g_\eta) = \mathcal{A}_{\mu,\eta}$ in $\Br(k(A))$ as $\Br(A) \to \Br(k(A))$ injective.  From step 2, its image in $\Br(k(A))$ is equal to $(\frac{1}{x-\mu}, \frac{1}{u-\eta})$ but since $y^2 = (x-\mu)(x-\mu_2)(x-\mu_3)$ and $v^2 = (u-\eta)(u-\eta_2)(u-\eta_3)$  we get that the above Brauer class is equal to $((x-\mu_2)(x-\mu_3), (u-\eta_2)(u-\eta_3))$ which is $\mathcal{A}_{\mu, \eta}$ by definition. This completes the first implication of the theorem. 
		
		To prove the reverse implication, we must show that if $A_{\mu, \eta}$ is not in the image of $\Br(X)/\Br_1(X)$, then $\Br(S)/\Br_1(S) = 0$. Consider the following commutative diagram:
		\[
		\begin{tikzcd}
			\Br(S_{\overline{k}}) \arrow{r} & \Br(X_{\overline{k}})[2] \arrow{r}{\cong} & \Br(A_{\overline{k}})[2] \\
			\Br(S)/\Br_1(S) \arrow{r}  \arrow{u} & \Br(X)[2]/\Br_1(X)[2] \arrow{r} \arrow{u} & \Br(A)[2]/\Br_1(A)[2] \arrow{u}{\cong}.
		\end{tikzcd}
		\]
        The isomorphism for the right vertical arrow follows from \cite[Proposition 3.3]{SkorobogatovZarhin}, the fact that the 2-torsion of $E_1$ and $E_2$ are defined over the base field, so the Galois group acts trivially, and the assumption that the elliptic curves are not isogenous over the algebraic closure. The isomorphism for the top right horizontal map follows from Lemma \ref{lem:BrClassesDescendingToKum} and the fact that $\overline{k}^\times/(\overline{k}^\times)^2$ is trivial. 

        The argument from the first part of the proof works over $\overline{k}$, so the image of the nontrivial element of $\Br(S_{\overline{k}})$ in $\Br(A)[2]/\Br_1(A)[2]$ is $\mathcal{A}_{\mu, \eta}$. Therefore, by commutativity, if the left vertical map was an isomorphism, the nontrivial element of $\Br(S)/\Br_1(S)$ would have to map to $\mathcal{A}_{\mu, \eta}$ which contradicts the assumption that it is not in the image of $\Br(X)[2]/\Br_1(X)[2]$.  This completes the proof.
	\end{proof}
	
	\begin{rem}
		Note that the above theorem and the previous lemmas from \cite{SkorobogatovZarhin} imply that we can check whether the transcendental Brauer group of $S$ is nontrivial and determine its image by just checking a few square conditions of terms given by the coefficients of the elliptic curves. 
	\end{rem}

    Now that we can calculate the transcendental Brauer group of the Enriques surface, our goal in the next section will be to study the Chow group over p-adic fields using the Brauer-Manin pairing. This will allow us to find nonzero elements of the degree 0 part of the Chow group of the Enriques surface. Before doing so, we will need one more result which we use in the next section concerning the Brauer group of the abelian surface. Proposition 3.3 of \cite{SkorobogatovZarhin} gives a canonical isomorphism
    \[
        \Br(A)[2]/\Br_1(A)[2]\cong \frac{\Hom_{\Gal(k)}(E_1[2], E_2[2])}{(\Hom(\overline{E_1}, \overline{E_2})/2)^{\Gal(k)}}.
    \]
    However, by assumption, $E_1$ and $E_2$ are not isogenous over the algebraic closure and the 2-torsion of both elliptic curves is defined over $k$. Therefore, the above reduces to a canonical isomorphism
    \[
        \Br(A)[2]/\Br_1(A)[2]\cong \Hom(E_1[2], E_2[2]).
    \]
    We need to compute this isomorphism explicitly which is the following lemma.

    \begin{lem}\label{lem: SZCanonicalIso}
        The canonical isomorphism 
        \[
	       \Br(A)[2]/\Br_1(A)[2] \to \Hom(E_1[2], E_2[2])
	    \]
        is given by $\mathcal{A}_{\mu, \eta} \mapsto \varphi_{\mu, \eta}$ where 
	\begin{align*}
		\varphi_{\mu, \eta}(O_{E_1}) &= O_{E_2} \\
		\varphi_{\mu, \eta}((\mu, 0)) &= O_{E_2} \\
		\varphi_{\mu, \eta}((\mu_2,0)) &= \eta\\
		\varphi_{\mu, \eta}((\mu_3,0)) &= \eta.
	\end{align*}
    \end{lem}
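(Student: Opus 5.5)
We have to pin down the Skorobogatov–Zarhin isomorphism $\Br(A)[2]/\Br_1(A)[2]\cong \Hom(E_1[2],E_2[2])$ and identify the image of each $\mathcal{A}_{\mu,\eta}$. The plan is to unwind their construction. On $\overline{A}$, the Kummer sequence gives $H^2_{\et}(\overline{A},\mu_2)\to \Br(\overline{A})[2]$. The relevant part of $H^2$ is the Künneth component $H^1(\overline{E_1},\mu_2)\otimes H^1(\overline{E_2},\Z/2)$. Using $H^1(\overline{E_i},\mu_2)\cong E_i[2]$ (via $\Pic$) and the Weil pairing self-duality $E_1[2]\cong \Hom(E_1[2],\Z/2)$, this component becomes
\[
E_1[2]\otimes E_2[2]\cong \Hom(E_1[2],E_2[2]).
\]
Concretely, the class of a cup product $\alpha\cup\beta$ with $\alpha\in H^1(E_1,\mu_2)$, $\beta\in H^1(E_2,\mu_2)$ corresponds to the homomorphism $P\mapsto e_2(P,\alpha)\cdot\beta$, where $e_2$ is the Weil pairing. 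I will check this normalisation against \cite[Proposition 3.3]{SkorobogatovZarhin}, where it is a cup product of torsors.

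First I identify each quaternion algebra $\mathcal{A}_{\mu,\eta}$ as such a cup product. Write
\[
\frac{x(x-a)(x-b)}{x-\mu}=(x-\mu_2)(x-\mu_3).
\]
Its divisor is $2(\mu_2,0)+2(\mu_3,0)-4O_{E_1}$, so the class of $\bigl(k(E_1),\sqrt{(x-\mu_2)(x-\mu_3)}\bigr)$ in $H^1(E_1,\mu_2)$ is the $\mu_2$-torsor associated to the $2$-torsion line bundle $(\mu_2,0)+(\mu_3,0)-2O_{E_1}$. That bundle is the class of the point $(\mu_2,0)+(\mu_3,0)=(\mu,0)$. Hence the first factor corresponds to $P=(\mu,0)\in E_1[2]$, and similarly the second factor corresponds to $(\eta,0)\in E_2[2]$. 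Thus $\mathcal{A}_{\mu,\eta}$ is the image of $(\mu,0)\otimes(\eta,0)$, which is consistent with Lemma~\ref{lem:PresBrA}.

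Second I transport this to $\Hom(E_1[2],E_2[2])$ via the Weil pairing. The tensor $(\mu,0)\otimes(\eta,0)$ goes to $T\mapsto e_2(T,(\mu,0))\,(\eta,0)$. Since $e_2$ is alternating and nondegenerate on $E_1[2]$, we have $e_2(T,(\mu,0))=1$ exactly for $T\in\{O_{E_1},(\mu,0)\}$ and $=-1$ for $T=(\mu_2,0),(\mu_3,0)$. So the homomorphism sends $O_{E_1}$ and $(\mu,0)$ to $O_{E_2}$, and sends $(\mu_2,0),(\mu_3,0)$ to $(\eta,0)$. This is exactly $\varphi_{\mu,\eta}$. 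Additivity in $\mu$ and $\eta$ matches the relations checked in Lemma~\ref{lem:PresBrA}, so it suffices to treat these generators.

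The main obstacle is the first step. It requires matching precisely the conventions in Skorobogatov–Zarhin's isomorphism (Kummer torsors versus $\Pic$, and which slot carries the Weil pairing duality) so that the map is genuinely canonical rather than off by an automorphism. The cleanest route is to follow their proof of \cite[Proposition 3.3]{SkorobogatovZarhin}, where the identification is built from $\Hom(E_1[2],E_2[2])\cong H^1(E_1,\Z/2)\otimes E_2[2]$ and the class $(f,g)$ with $f$ cutting out twice a torsion divisor. Alternatively, one can read off their explicit table of algebras on pp.~14–15 under the relabeling noted earlier.

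A sanity check that ambiguity cannot arise: any $\Gal$-equivariant candidate must send $\mathcal{A}_{\mu,\eta}$ to a rank-one homomorphism with image $\langle(\eta,0)\rangle$ and kernel $\langle(\mu,0)\rangle$. This is forced by symmetry under relabeling the roots. That homomorphism is uniquely $\varphi_{\mu,\eta}$, so the explicit torsor computation only needs to be done up to this symmetry.
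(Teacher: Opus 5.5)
There is a genuine gap: the step you defer as ``the main obstacle'' is the whole content of the lemma. Your Steps 1 and 2 are correct, and they match the easy parts of the paper's proof. The paper also checks that the Weil-pairing identification $E_1[2]\otimes E_2[2]\cong\Hom(E_1[2],E_2[2])$ sends $(\mu,0)\otimes(\eta,0)$ to $\varphi_{\mu,\eta}$. It also uses that $\frac{x(x-a)(x-b)}{x-\mu}\equiv x-\mu$ modulo squares, with $\dv(x-\mu)=2(\mu,0)-2O_{E_1}$.

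The Skorobogatov--Zarhin isomorphism is not defined through $H^1(\overline{E}_i,\mu_2)\cong\Pic(\overline{E}_i)[2]$. It is $\xi(\varphi)=\varphi_*([\mathcal{T}_1]\boxtimes[\mathcal{T}_2])$. Over $\overline{k}$ this sends $\chi_1\otimes\chi_2$ to $\tau_1(\chi_1)\cup\tau_2(\chi_2)$, where $\tau_i(\chi)=\chi_*[\overline{\mathcal{T}}_i]$ is the push-out of the multiplication-by-$2$ torsor along a character. Your formula $\alpha\cup\beta\mapsto(T\mapsto e_2(T,\alpha)\beta)$ equals the SZ map only if the following holds: the restriction of $\tau_1(e_2(P,-))$ to the generic point is the Kummer class of a function $f_P$ with $\dv f_P=2(P)-2(O_{E_1})$. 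Equivalently, $\tau_1(e_2(P,-))$ must correspond to $P$ itself under Kummer/$\Pic$. Without this, you have computed the images of the $\mathcal{A}_{\mu,\eta}$ under an isomorphism you defined yourself. That is not the canonical one through which the paper later applies Yamazaki's computation in Lemma \ref{lem: CondsForNontrivPairing}. Pointing to the algebras on pp.~14--15 of \cite{SkorobogatovZarhin} does not help. Those pages show that the $\mathcal{A}_{\mu,\eta}$ generate, not which homomorphism each one corresponds to.

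The paper closes this gap with its Claim (*), computed directly from the definition of the Weil pairing. Choose $h$ with $h^2=f_P\circ[2]$, and a point $T_0$ with $[2]T_0$ equal to the generic point of $\overline{E}_1$. The torsor then has cocycle $\theta\mapsto c_\theta=\theta(T_0)-T_0$. This gives $e_2(c_\theta,P)=h(T_0+c_\theta)/h(T_0)=\theta(h(T_0))/h(T_0)$, and $h(T_0)^2=f_P$. Alternatively, you could invoke the classical complete $2$-descent formula over the field $\overline{k}(E_1)$, applied to the generic point.

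Your closing symmetry argument does not fill the gap. Since $E_i[2]$ is $k$-rational, Galois equivariance imposes no condition at all here. Relabeling the roots is not induced by any automorphism of $A$: for generic curves, $\mathrm{Aut}(E_i)=\{\pm 1\}$ acts trivially on $E_i[2]$. So naturality of the canonical isomorphism gives no invariance under the transpositions fixing $(\mu,0)$ and $(\eta,0)$. Such invariance would indeed force the image to be $\varphi_{\mu,\eta}$. Establishing it, however, would require passing to a generic pair of curves with Galois image $S_3\times S_3$ on $E_1[2]\times E_2[2]$ and then specializing, which you have not done.
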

    Note: Throughout this proof we will often write the group $\{\pm 1\} \subset k$ as $\Z/2\Z$ rather than $\mu_2$ to avoid notational confusion with the 2-torsion points on $E_1$.
    \begin{proof}
        We consider the canonical map 
        \[
            E_1[2]\otimes E_2[2] \to \Hom(E_1[2],E_2[2]) \to \Br(A)[2]/\Br_1(A)[2]
        \]
        where the first map is the canonical map induced by the Weil pairing and the second map is the canonical isomorphism given in \cite{SkorobogatovZarhin}. We first claim that the map $E_1[2]\otimes E_2[2] \to \Hom(E_1[2],E_2[2])$ is given by $((\mu, 0),(\eta,0)) \mapsto \varphi_{\mu, \eta}$. 
        
        Consider the Weil pairing
        \[
            w(- , - ): E_1[2]\times E_1[2] \to \{\pm 1\}
        \]
        which is alternating, bilinear, and nondegenerate. By these properties, it is easy to see that $w(P,Q) = -1$ if and only if $P\neq Q$ and neither is the trivial group element. Therefore, the induced isomorphism
        \[
            E_1[2] \to \Hom(E_1[2], \Z/2\Z)
        \]
        is defined by $P \mapsto \psi_P$ where $\psi_P$ is the homomorphism which sends $0$ and $P$ to $0$ and the two nontrivial elements not equal to $P$ map to $1$. So the composition
        \[
            E_1[2]\otimes E_2[2] \cong \Hom(E_1[2], \Z/2\Z)\otimes E_2[2] \cong \Hom(E_1[2], E_2[2])
        \]
        is given by $((\mu, 0), (\eta, 0)) \mapsto \psi_{(\mu, 0)}(-)(\eta, 0)$. Now we get $\psi_{(\mu, 0)}(O_{E_1}) = \psi_{(\mu, 0)}((\mu, 0))= 0$ and $\psi_{(\mu, 0)}((\mu_2, 0)) = \psi_{(\mu_3, 0)}((\mu, 0))= 1$. Thus the function $\psi_{(\mu, 0)}(-)(\eta, 0) = \varphi_{\mu, \eta}$ by definition. So to prove the Lemma we must show that 
        \[
            E_1[2]\otimes E_2[2] \to \Hom(E_1[2],E_2[2]) \to \Br(A)[2]/\Br_1(A)[2]
        \]
        sends $((\mu,0),(\eta, 0))$ to $\mathcal{A}_{\mu, \eta}$. We also can compose with the injections 
        \[
            \Br(A)[2]/\Br_1(A)[2] \to \Br(\overline{A})[2] \to \Br(\overline{k}(A))[2]
        \]
        and so it suffices to show that the map 
        \[
            E_1[2]\otimes E_2[2] \to \Hom(E_1[2],E_2[2]) \to \Br(A)[2]/\Br_1(A)[2] \to \Br(\overline{k}(A))[2]
        \]
        sends $((\mu, 0), (\eta, 0)) \mapsto \mathcal{A}_{\mu,\eta}$. 
        
        The canonical isomorphism in \cite[Section 3]{SkorobogatovZarhin} is described in the following way. Let $p_1: A \to E_1$, $p_2: A \to E_2$ be the projections to the first and second factors. Let $\mathcal{T}_1$ and $\mathcal{T}_2$ be the $E_1$ and $E_2$ torsors with structure groups $E_1[2]$ and $E_2[2]$ respectively defined by the multiplication by 2 maps $[2]: E_i \to E_i$. Then let $[\mathcal{T}_1] \boxtimes [\mathcal{T}_2] \in H_{\et}^2(A, E_1[2]\otimes E_2[2])$ denote $p_1^*(\mathcal{T}_1) \cup p_2^*(\mathcal{T}_2)$. Since the 2-torsion of both elliptic curves is defined over $k$, for $\varphi \in \Hom(E_1[2]\otimes E_2[2], \mu_2)$, there is an induced map $\varphi_*: H^2_{\et}(A, E_1[2]\otimes E_2[2]) \to H^2(A, \mu_2)$. Denote the map 
        \[
            \xi: \Hom(E_1[2]\otimes E_2[2], \Z/2\Z) \to H^2(A, \Z/2\Z)
        \]
        to be $\xi(\varphi) = \varphi_*([\mathcal{T}_1]\boxtimes[\mathcal{T}_2])$. Then by \cite[Proposition 3.3]{SkorobogatovZarhin}, the canonical isomorphism is induced by the composition
        \[
            \Hom(E_1[2], E_2[2]) \to \Hom(E_1[2]\otimes E_2[2], \Z/2\Z) \xrightarrow{\xi} H^2(A, \Z/2\Z) \to \Br(A)[2]
        \]
        where the first map is the canonical isomorphism induced by the Weil pairing and the last map is the one from the Kummer sequence associated to 
        \[
            0 \to \Z/2\Z \to \G_m \to \G_m \to 0.
        \]

        Now let us consider the composition of the above with the map $\Br(A)[2] \to \Br(\overline{A})[2]$. By \cite{SkorobogatovZarhin} Proposition 3.3 and the remarks on page 12, the composition
        \[
            \Hom(E_1[2] \otimes E_2[2], \Z/2\Z) \xrightarrow{\xi} H^2(A, \Z/2\Z) \to \Br(A)[2] \to \Br(\overline{A})[2]
        \]
        is equal to the composition
        \begin{align*}
            &\Hom(E_1[2] \otimes E_2[2], \Z/2\Z) \to \Hom(E_1[2], \Z/2\Z) \otimes \Hom(E_2[2], \Z/2\Z) \xrightarrow{\tau_1\otimes \tau_2} \\
            &H^1(\overline{E_1}, \Z/2\Z)\otimes H^1(\overline{E_2}, \Z/2\Z) \xrightarrow{p_1^*\otimes p_2^*} H^1(\overline{A}, \Z/2\Z)\otimes H^1(\overline{A},\Z/2\Z) \xrightarrow{\cup} H^2(\overline{A}, \Z/2\Z) \to \Br(\overline{A})[2]
        \end{align*}
        where the first map is induced by the Weil pairing, the last map is the one arising from the Kummer sequence, and the maps $\tau_1$ and $\tau_2$ are defined as follows. Let $[\overline{\mathcal{T}}_i]$ be the image of $[\mathcal{T}_i]$ under the natural map $H^1(E_i, E_i[2]) \to H^1(\overline{E}_i, E_i[2])$. Then $\tau_i(\varphi) := \varphi_*([\overline{\mathcal{T}}_i])$.
        \\
        \underline{Claim (*)}: Consider the map 
        \[
            E_1[2]\otimes E_2[2] \to\Hom(E_1[2]\otimes E_2[2], \Z/2\Z) \to \Br(\overline{A})[2] \to \Br(\overline{k}(A))[2]
        \]
        where the first map is the canonical morphism induced by the Weil pairing and the second map is the above composition. Then this is equal to 
        \[
            E_1[2]\otimes E_2[2] \to \Br(\overline{k}(A))[2]
        \]
        defined by sending $P\otimes Q$ to the quaternion algebra $(f_P, g_Q)$, where $f_P$ has the divisor $2(P)-2(O_{E_1})$ and $g_Q$ has the divisor $2(Q) - 2(O_{E_2})$ (note that any two such functions with these above properties differ by a constant and so they define the same quaternion algebra in $\Br(\overline{k}(A))$).

        Let us assume Claim (*). Then if we let $P=(\mu,0)$ and $Q = (\eta,0)$, we can take $f_P = x-\mu$ and $g_Q = u-\eta$. Since modulo squares $x-\mu \equiv \frac{x(x-a)(x-b)}{x-\mu}$ and $u-\eta \equiv \frac{u(u-a')(u-b')}{u-\eta}$ we get that the map $E_1[2]\otimes E_2[2] \to \Br(\overline{k}(A))[2]$ is given by $((\mu,0),(\eta,0)\mapsto \mathcal{A}_{\mu,\eta}$. 

        Finally, we prove Claim (*). Consider the following commutative diagram 
        \[
            \begin{tikzcd}
                H_\et^1(\overline{E_1}, \Z/2\Z)\otimes H_\et^1(\overline{E_2}, \Z/2\Z) \arrow[r, "p_1^*\cup p_2^*"] \arrow{d} & H_\et^2(\overline{A}, \Z/2\Z) \arrow{d} \arrow[r] &\Br(\overline{A})[2] \arrow{d} \\
                H_\et^1(\overline{k}(E_1), \Z/2\Z)\otimes H_\et^1(\overline{k}(E_2), \Z/2\Z) \arrow[r, "p_1^*\cup p_2^*"] \arrow[d] & H_\et^2(\overline{k}(A), \Z/2\Z) \arrow[r] &\Br(\overline{k}(A))[2] \arrow[d,"="] \\
                \overline{k}(E_1)^\times/(\overline{k}(E_1)^\times)^2 \otimes \overline{k}(E_2)^\times/(\overline{k}(E_2)^\times)^2 \arrow[rr] & & \Br(\overline{k}(A))[2].
            \end{tikzcd}
        \]
        By \cite[Proposition 4.7.1]{GilleSzamuely}, the bottom horizontal map is given by $f\otimes g \mapsto (f,g)$ where the right hand side denotes the class of the quaternion algebra. Therefore, it suffices to show that the composition
        \[
            \Hom(E_1[2], \Z/2\Z) \xrightarrow{\tau_i} H_\et^1(\overline{E}_1, \Z/2\Z) \to H_\et^1(\overline{k}(E_1), \Z/2\Z) \cong \overline{k}(E_1)/((\overline{k}(E_1))^\times)^2
        \]
        sends $w(P, -)$ to $f_P$ where $f_P$ is a rational function with the divisor $2(P)-2(O_{E_i})$ and $w(-,-)$ is the Weil pairing (the argument for $E_2$ is the same). In other words, we need to show that the image of $[\overline{\mathcal{T}}_1]$ under the composition 
        \[
            H_\et^1(\overline{E}_1, E_1[2]) \to H_\et^1(\overline{k}(E_1), E_1[2]) \xrightarrow{w(P, -)_*}  H_\et^1(\overline{k}(E_1), \Z/2\Z) \to \overline{k}(E_1)/(\overline{k}(E_1)^\times)^2
        \]
        is $f_P$. Note that $\overline{k}(E_1)$ is the function field of $\overline{E}_1$ as they are both generated by the rational functions $x,y$ over $\overline{k}$. Consider the pullback diagram 
        \[
            \begin{tikzcd}
            \overline{E}_1\times_{\overline{E}_1}\Spec \overline{k}(E_1) \arrow[r] \arrow[d] & \Spec \overline{k}(E_1) \arrow[d, "\eta"] \\
            \overline{E}_1  \arrow[r, "{[2]}"] & \overline{E}_1
            \end{tikzcd}
        \]
        where $\eta \in \overline{E}_1(\overline{k}(E_1))$ corresponds to the generic point of the elliptic curve. Denote $Y := \overline{E}_1\times_{\overline{E}_1}\Spec \overline{k}(E_1)$. Note that it is naturally isomorphic to $\Spec \overline{k}(E_1)$. Then the pullback of the torsor $\overline{\mathcal{T}}_1$ to the generic point is the torsor 
        \[
            Y \to \Spec \overline{k}(E_1)
        \]
        with structure group $E_1[2]$ which acts by the map induced by translation on $Y$. Then this torsor corresponds to a cocyle in $H_\et^1(\overline{E}_1, \Z/2\Z)$ in the following way. 
        
        Choose a point $T_0 \in Y(\overline{k(E_1)})$. This corresponds to a point which we also denote $T_0 \in \overline{E}_1(\overline{k(E_1)})$ such that $[2](T_0) = \eta \in \overline{E}_1(\overline{k(E_1)})$. Then the torsor corresponds to the cocycle 
        \[
            \Gal(\overline{k(E_1)}/\overline{k}(E_1)) \to E_1[2]
        \]
        \[
            \theta \mapsto c_\theta
        \]
        where $c_\theta := \theta(T_0) - T_0 \in \overline{E}_1(\overline{k(E_1)})$. Note that $c_\theta$ indeed defines an element of $E_1[2]$ since $[2](\theta(T_0) - T_0) = \theta([2]T_0) - 2T_0 = \theta(\eta) - \eta = O_{\overline{E}_1}$ where the first equality follows from the fact that the multiplication by 2 morphism is defined over $k$ and hence over $\overline{k}(E_1)$. 
    
        Now let $f_P$ be a function in $\overline{k}(E_1)$ with divisor $2(P) - 2(O_{E_1})$. And let $h \in \overline{k}(E_1)$ be such that $h^2 = f \circ [2]$. Then by the definition of the Weil pairing, we get that $w(c_\theta, P) = \frac{h(x+c_\theta)}{h(x)}$ for all $x \in \overline{E}_1(\overline{k})$. However, the same formula holds for any $x \in \overline{E}_1(\overline{k(E_1)})$ by considering the Weil pairing for the base change of $\overline{E}_1$ to $\overline{k}(E_1)$. Therefore, if we take $x = T_0$, we get
        \[
            w(c_\theta, P) = \frac{h(T_0+\theta(T_0) - T_0)}{h(T_0)} = \frac{h(\theta(T_0))}{h(T_0)} = \frac{\theta(h(T_0))}{T_0}
        \]
        since $h$ is defined over $\overline{k}(E_1)$ and hence commutes with $\theta$. 
        
        So the image of $\overline{\mathcal{T}}_1$ in $H^1(\overline{k}(E_1), \Z/2\Z)$ under the composition
        \[
            H^1(\overline{E}_1, E_1[2]) \to H^1(\overline{k}(E_1), E_1[2]) \xrightarrow{w(P, -)_*}  H^1(\overline{k}(E_1), \Z/2\Z)
        \]
        is the cocycle defined by $\theta \mapsto \frac{\theta(h(T_0))}{h(T_0)}$. But since by definition, $f_P=h^2$, we get by the standard Kummer identification $\overline{k}(E_1)^\times/(\overline{k}(E_1)^\times)^2 \to H^1(\overline{k}(E_1), \Z/2\Z)$  that the class of $f_P$ maps to $w(c_\sigma, P)$. But this is exactly what we needed to show, so it completes the proof of Claim (*).
    \end{proof}
	
	\section{Brauer-Manin Pairing and the Chow Group Over p-adic Fields}
    Throughout this section the base field $k$ will be a finite extension of $\Q_p$.
    \subsection{The Brauer-Manin Pairing}
	Let $Y$ be a smooth proper variety over a p-adic field $k$. Then the Brauer-Manin pairing \cite[pg 53]{Colliot-Thelene}
	\[
	\langle -, - \rangle_Y: \ch_0(Y)\times \Br(Y) \to \Q/\Z
	\]
	is defined as follows. Given the class of a closed point $[y] \in \ch_0(Y)$ the closed embedding $\iota_y: \Spec k(y) \into Y$ induces a map $\iota_y^*: \Br(Y) \to \Br(k(y))$. We also have the corestriction map from Galois cohomology $\Cor_{k(y)/k}: \Br(k(y)) \to \Br(k)$. Then given a Brauer class $\alpha \in \Br(Y)$, $\langle [y], \alpha \rangle_Y := \text{inv}\left(\Cor_{k(y)/k}(\iota_y^*(\alpha))\right)$ where $\text{inv}: \Br(k) \to \Q/\Z$ is the invariant map isomorphism. We can restrict this pairing to the degree $0$ subgroup $A_0(Y)$ of $\ch_0(Y)$, and we get the pairing
	\[
	\langle -, - \rangle_Y: A_0(Y)\times \Br(Y) \to \Q/\Z.
	\]
	Notice that for an element $\alpha \in \Br_0(Y)$, pairing with a cycle $z \in \ch_0(X)$ amounts to multiplication by the degree of the cycle $z$. Therefore, when restricting to the degree $0$ subgroup, we get a well-defined pairing
	\[
	\langle -, - \rangle_Y: A_0(Y)\times \Br(Y)/\Br_0(Y) \to \Q/\Z.
	\]
	Restricting to the subgroup $\Br(Y)[n]/\Br_0(Y)[n]$ on the right and $T(Y)$ on the left, we get a well-defined pairing 
	\[
	\langle -, - \rangle_Y: T(Y)/n\times \Br(Y)[n]/\Br_0(Y)[n] \to \Z/n\Z.
	\]
	We also have that the pairing is functorial in the following sense.
	\begin{lem}\label{lem: BMFunctoriality}\cite[Lemma 3.7]{GazakiLove}
		Let $f: Y \to Z$ be a proper map of smooth varieties over a $p$-adic field $k$. Then for any $n\ge 1$ the following diagram commutes:
		
		\[\begin{tikzcd}
			T(Y)/n \arrow[d,"f_*"] & [-3em] \times & [-3em] \Br(Y)[n]/\Br_0(Y)[n] \arrow[r,"{\langle-,-\rangle_Y}"] & \Z/n\Z \\
			T(Z)/n & [-3em] \times & [-3em] \Br(Z)[n]/\Br_0(Z)[n] \arrow[u,"f^*"] \arrow[r,"{\langle-,-\rangle_Z}"'] &\Z/n\Z \arrow[u,equal].
		\end{tikzcd}\]
	\end{lem}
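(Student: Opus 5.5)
The plan is to reduce the commutativity to a projection formula on closed points, and then extend to the quotient groups. By linearity it suffices to show, for a closed point $y \in Y$ and a class $\beta \in \Br(Z)$, that
\[
\langle f_*[y], \beta\rangle_Z = \langle [y], f^*\beta\rangle_Y .
\]
Let $z = f(y)$. Then $f_*[y] = [k(y):k(z)]\,[z]$. The embedding $\iota_y$ factors as $\Spec k(y) \to \Spec k(z) \xrightarrow{\iota_z} Z$ after composing with $f$, so
\[
\iota_y^* f^*\beta = \operatorname{Res}_{k(y)/k(z)}\bigl(\iota_z^*\beta\bigr).
\]
Apply $\Cor_{k(y)/k}=\Cor_{k(z)/k}\circ\Cor_{k(y)/k(z)}$, and use that $\Cor\circ\operatorname{Res}$ is multiplication by the degree $[k(y):k(z)]$ in Galois cohomology. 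This gives
\[
\Cor_{k(y)/k}\bigl(\iota_y^*f^*\beta\bigr) = [k(y):k(z)]\,\Cor_{k(z)/k}\bigl(\iota_z^*\beta\bigr).
\]
Taking invariants yields the identity. This computation is the main content, though it is routine.

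Next I check that the pairings in the diagram are well defined and compatible on the quotients. The map $f_*$ sends $T(Y)$ into $T(Z)$ (as recalled in Section 2.1), and it descends modulo $n$. The map $f^*$ sends $\Br(Z)[n]$ into $\Br(Y)[n]$. It also sends $\Br_0(Z)$ into $\Br_0(Y)$, because the structure maps satisfy $Y\to Z\to\Spec k$.

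On $T/n \times \Br[n]/\Br_0[n]$ the pairing is well defined. Pairing with an element of $\Br_0$ gives the degree times a constant, which is $0$ on degree-zero cycles. An $n$-torsion Brauer class pairs into $\frac{1}{n}\Z/\Z\cong\Z/n\Z$ and kills $nT$.

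Commutativity of the diagram then follows from the pointwise identity extended bilinearly. The only subtle point I anticipate is the corestriction–restriction compatibility for the composite field tower. This uses transitivity of corestriction together with $\operatorname{inv}_k\circ\Cor_{K/k}=\operatorname{inv}_K$ for finite $K/k$ in local class field theory. Either form suffices.
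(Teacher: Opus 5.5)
Your proof is correct. The key step is the pointwise identity, obtained from $f\circ\iota_y=\iota_z\circ\Spec(k(z)\hookrightarrow k(y))$, transitivity of $\Cor$, and $\Cor_{k(y)/k(z)}\circ\operatorname{Res}_{k(y)/k(z)}=[k(y):k(z)]$; you also correctly check that $f_*$ and $f^*$ respect $T$, $n$-torsion and $\Br_0$. The paper gives no proof of this lemma and simply cites \cite[Lemma 3.7]{GazakiLove}; your computation is the standard argument for this functoriality.
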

    
    Let us define our setup which we will assume for our results over p-adic fields. 
    \begin{set} \label{set: PadicSetting}
        The abelian surface $A$ is the product of the elliptic curves 
        \[
            E_1: y^2 = x(x-a)(x-b)
        \]
        \[
            E_2: v^2 = u(u-a')(u-b')
        \]
        which are defined over a number field $F$. We assume that the elliptic curves are not isogenous over $\overline{\Q}$. Let $\mathfrak{p}$ be a fixed prime in $\mathcal{O}_F$ such that the residue field $\F_{\mathfrak{p}}$ has characteristic $p > 2$. Let $k$ denote the p-adic field which is the completion of $F$ with respect to the norm associated to $\mathfrak{p}$.  We consider the elliptic curves to be defined over the p-adic field $k$. That is, $E_1$ and $E_2$ will mean the base change of the elliptic curves to the p-adic field $k$. We denote the Kummer surface associated to $A$ by $X$. For a choice of $P \in E_1[2], Q \in E_2[2]$ we denote the corresponding Enriques surface of Lieberman type by $S_{P,Q}$. Let $f_{P,Q}: A \dashrightarrow  S$ denote the rational map from the abelian surface to the Enriques surface (we will denote $f_{P,Q}$ by $f$ where it will not cause confusion). We will also abuse notation and write $f_*: A_0(A) \to A_0(S)$ even though $f$ does not define a proper map from $A$ to $S$ since $\tilde{A} \to S$ is proper and $\tilde{A} \to A$ induces an isomorphism on Chow groups.
    \end{set}

    \subsection{The Case of Good Reduction}\label{sec: GoodRed}
        Let us assume that both elliptic curves have good reduction. That is, we assume $E_1$ and $E_2$ have smooth proper models over $\mathcal{O}_k$. In this case, by \cite[Corollary 3.5.1]{RaskindSpiess}, $T(A)$ is 2-divisible. Therefore, $f_*(T(A)) = 0$. However, we can use the work of Saito and Sato to relate the Chow group of $S$ to the Chow group of its reduction over the residue field.
        \begin{lem}
            If the abelian surface $A$ has good reduction, then the surface $S$ has good reduction.
        \end{lem}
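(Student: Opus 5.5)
The plan is to build a smooth proper model of $S$ over $\mathcal{O}_k$ by running the construction of $S$ from $A$ over the base $\mathcal{O}_k$ rather than over $k$. Let $\mathcal{E}_1,\mathcal{E}_2$ be the smooth proper (Néron) models of $E_1,E_2$ over $\mathcal{O}_k$. Set $\mathcal{A}:=\mathcal{E}_1\times_{\mathcal{O}_k}\mathcal{E}_2$, which is an abelian scheme over $\mathcal{O}_k$.

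Since $p>2$, the finite flat group scheme $\mathcal{A}[2]$ is étale over $\mathcal{O}_k$. The $2$-torsion of $E_1,E_2$ is $k$-rational, so the sixteen $2$-torsion points extend to sixteen disjoint sections of $\mathcal{A}\to\Spec\mathcal{O}_k$. Moreover, their reductions in the special fibre $\mathcal{A}_s$ remain distinct. The points $P,Q$ therefore extend to nontrivial $2$-torsion sections $\mathcal{P},\mathcal{Q}$, and the formula $\sigma(x,y)=(-x+\mathcal{P},y+\mathcal{Q})$ defines an involution $\sigma$ of $\mathcal{A}$ over $\mathcal{O}_k$ that commutes with $\iota$.

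\textbf{Step 1: the Kummer model.} Let $\tilde{\mathcal{A}}$ be the blowup of $\mathcal{A}$ along the sixteen disjoint sections. The centre is smooth over the base, so $\tilde{\mathcal{A}}$ is smooth and proper over $\mathcal{O}_k$. Its fibres are the blowups of $A$ and of $\mathcal{A}_s$ at their $2$-torsion points. The involution $\iota$ lifts to $\tilde{\mathcal{A}}$, and its fixed locus is exactly the exceptional divisor, a relative Cartier divisor that is smooth over $\mathcal{O}_k$. Because $2$ is invertible on $\mathcal{O}_k$, the quotient $\mathcal{X}:=\tilde{\mathcal{A}}/\iota$ by a group of order prime to the residue characteristic is smooth over $\mathcal{O}_k$. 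This holds because étale-locally along the fixed divisor the action is $t\mapsto -t$ in one coordinate, whose invariant ring is $\mathcal{O}[t^2,\dots]$. Taking tame quotients also commutes with base change, so the generic fibre of $\mathcal{X}$ is $X$ and its special fibre is $\mathrm{Kum}(\mathcal{A}_s)$.

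\textbf{Step 2: the Enriques model.} The involution $\sigma$ descends to an involution $\tilde\sigma$ of $\mathcal{X}$, as in Lemma \ref{lem: KumInv}. It is enough to show that $\tilde\sigma$ is fixed-point free on both fibres. The argument of Lemma \ref{lem: KumInv} works verbatim over the residue field, because the reductions of $\mathcal{P},\mathcal{Q}$ are nonzero $2$-torsion points and $\mathrm{char}\ne 2$. Hence $\langle\tilde\sigma\rangle$ acts freely on $\mathcal{X}$, so $\mathcal{X}\to\mathcal{S}:=\mathcal{X}/\tilde\sigma$ is finite étale of degree $2$, and $\mathcal{S}$ is smooth and proper over $\mathcal{O}_k$. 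Since free quotients commute with base change, $\mathcal{S}_k=S$. The special fibre is the Lieberman-type Enriques surface attached to the reductions of $E_1,E_2$.

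The main obstacle is Step 1. There I must verify carefully that blowing up and then taking the $\iota$-quotient gives a smooth $\mathcal{O}_k$-scheme whose formation commutes with base change to both fibres. The cleanest route is to invoke flatness of invariants for a group of order invertible on the base. Failing that, I would do a local computation at the exceptional divisor, where $\iota$ acts as a reflection.
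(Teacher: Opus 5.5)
Your proposal is correct and follows essentially the same route as the paper. The paper also builds the Kummer model as $\mathcal{X}=\tilde{\mathcal{A}}/\iota$, citing Matsumoto (Lemma 4.2) for its smoothness instead of doing your local computation, and then takes the quotient by the extended involution $\tilde{\sigma}_{P,Q}$. Your explicit check that $\tilde\sigma$ acts freely on the special fibre, so that $\mathcal{X}\to\mathcal{S}$ is \'etale and $\mathcal{S}$ is smooth, fills in a step the paper leaves implicit.
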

        \begin{proof}
            Since the residue field has characteristic not equal to $2$, the assumption that $A$ has good reduction implies the Kummer surface $X$ has good reduction \cite[Lemma 4.2]{Matsumoto}. Also, by the proof of that result, the smooth proper model $\mathcal{X}$ over $\mathcal{O}_k$ is described as follows. Let $\mathcal{A}$ denote the smooth proper model of $A$ over $\mathcal{O}_k$ and let $\tilde{\mathcal{A}}$ denote the blow-up of the abelian scheme at the subscheme $\mathcal{A}[2]$. Then $\mathcal{X}\cong \tilde{\mathcal{A}}/\iota$ where $\iota$ denotes the negation involution on the abelian scheme. Therefore, we can define the involution $\tilde{\sigma}_{P,Q}$ on the scheme $\mathcal{X}$ with the property that its restriction to the generic and special fibers is the usual involution on the Kummer surfaces over the p-adic field and residue field respectively. Therefore, $\mathcal{X}/\langle \tilde{\sigma}_{P,Q} \rangle$ is a proper flat scheme over $\mathcal{O}_k$. The generic fiber is $S$ by definition. And the the special fiber is the Enriques quotient of the Kummer surface $X_{\F_{\mathfrak{p}}}$ by the action $\sigma_{P,Q}$ over $\F_p$. Therefore, $\mathcal{X}/\langle \tilde{\sigma}_{P,Q} \rangle$ is a smooth proper model with generic fiber $S$, so $S$ has good reduction. 
        \end{proof}
        Then by the Lemma, \cite[Théorème 0.1]{Colliot-TheleneChowSuveyPadics} implies that the specialization map
        \[
            A_0(S) \to A_0(S_{\F_{\mathfrak{p}}})
        \]
        is surjective and the kernel is 2-divisible. Since $A_0(S)$ is 16-torsion by Theorem \ref{thm: ChUpperBound}, this implies that the above map is an isomorphism. By \cite[Proposition 9]{KatoSaito}, $A_0(S_{\F_{\mathfrak{p}}})$ is isomorphic to 
        \[
            \Hom(\Hom_{\F_{\mathfrak{p}}}(\mu_n, \text{NS}(S_\mathfrak{p})), \Q/\Z)
        \]
        where $\Hom_{\F_{\mathfrak{p}}}$ denotes the group of homomorphisms of $\F_{\mathfrak{p}}$-group schemes. But since the torsion of the Néron-Severi group of an Enriques surface over any field of characteristic greater than $2$ is $\Z/2\Z$ \cite[Remarks following Theorem 1.3.1]{CossecDolgachevLiedtke}, we have that $\text{NS}(S_\mathfrak{p})_\text{tor}$ is $\Z/2\Z$. Therefore, the above group is isomorphic to $\Z/2\Z$. Thus $A_0(S) \cong \Z/2\Z$ in this case. 

    \subsection{The Case of Split Multiplicative Reduction}
    Given the above, we focus on the case where both elliptic curves have split multiplicative reduction. That is, we assume $E_1$ and $E_2$ have regular proper models over $\mathcal{O}_k$ such that the special fiber has a nodal singularity such that the slopes of the tangent lines are in the residue field $\F_{\mathfrak{p}}$.

    Our goal is to find examples where the pairing 
	\[
	(f)_*T(A)/2 \times \Br(S)[2]/\Br_0(S)[2] \xrightarrow{\langle -, - \rangle_S} \Z/2\Z
	\]
	is nontrivial. This in particular implies that $A_0(S)$ is nontrivial. By Lemma \ref{lem: BMFunctoriality}, this is equivalent to finding examples where 
	\[
	T(A)/2 \times f^*\Br(S)[2]/\Br_0(S)[2] \xrightarrow{\langle -, - \rangle_A} \Z/2\Z
	\]
	is nontrivial. So, let us first consider the pairing 
	\[
	T(A)/2 \times \Br(A)[2]/\Br_0(A)[2] \xrightarrow{\langle -, - \rangle_A}  \Z/2\Z.
	\]
	By Proposition 2.16 of \cite{GazakiKoutsianas}, we have that this pairing factors through 
	\[
	T(A)/2 \times \Br(A)[2]/\Br_1(A)[2] \xrightarrow{\langle -, - \rangle_A}  \Z/2\Z.
	\]
    
    Let $H$ denote the quotient of this group by the right kernel of the pairing $\langle -, - \rangle_A$. In our case, the group $H$ has been computed explicitly and can be described via the p-adic uniformization of the elliptic curves. 
	
	Since $E_1$ and $E_2$ have split multiplicative reduction, there exists $q_1, q_2 \in K^\times$ with $|q_1|_\mathfrak{p} < 1$, $|q_2|_{\mathfrak{p}} < 1$ and an analytic isomorphism $E_i(K) \cong K^\times/q_i^\Z$. Then we can identify $E_i[2]$ with the set $\{\pm \sqrt{q_i^\epsilon}, \epsilon \in {0,1}\}$. So we have a split short exact sequence 
	\[
	   0 \to \{\pm 1\} \to E_i[2] \to \Z/2\Z \to 0,
	\]	
	where the second map sends $\pm \sqrt{q_i^\epsilon} \to \epsilon$. Then by the proof of theorem 4.3 of \cite{Yamazaki}, $H \cong \Hom(\{\pm 1\}, \Z/2\Z)$  where the isomorphism is induced by the quotient 
    \[
        \Br(A)[2]/\Br_1(A)[2] \cong \Hom(E_1[2], E_2[2]) \to \Hom(\{\pm 1\}, \Z/2\Z).
    \]
    So, showing that the pairing 
    \[
	(f)_*T(A)/2 \times \Br(S)[2]/\Br_0(S)[2] \xrightarrow{\langle -, - \rangle_S} \Z/2\Z
	\]
    is nontrivial is equivalent to showing that the composition
	\[
	\Br(S)/\Br_0(S) \to \Br(A)[2]/\Br_1(A)[2] \to \Hom(\{\pm 1\}, \Z/2\Z)
	\]
	is nonzero. We have the following key lemma.
	
	\begin{lem}\label{lem: CondsForNontrivPairing}
		Let $P_0$ correspond to the image of $-1 \in \{\pm \sqrt{q_1^{\epsilon_1}}\}$ and $Q_0$ correspond to the image of $-1 \in \{\pm \sqrt{q_2^{\epsilon_2}}\}$ under the p-adic uniformizations. Then $(f_{P,Q})_*T(A)/2$ pairs nontrivially with $\Br(S_{P,Q})/\Br_0(S_{P,Q})$ if and only if $\mathcal{A}_{\mu, \eta}$ is in the image of $\Br(X)/\Br_1(X)$ where $(\mu, 0) = P \neq P_0$, $(\eta, 0) = Q \neq Q_0$.
	\end{lem}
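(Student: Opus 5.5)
The plan is to reduce everything to the explicit descriptions already obtained: Theorem \ref{thm: CalcBrS} for the image of $\Br(S)$ in $\Br(A)[2]/\Br_1(A)[2]$, Lemma \ref{lem: SZCanonicalIso} for the identification with $\Hom(E_1[2],E_2[2])$, and Yamazaki's description of $H$.

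First, by the discussion preceding the lemma (Lemma \ref{lem: BMFunctoriality} together with the result of \cite{GazakiKoutsianas} and \cite{Yamazaki}), nontriviality of the pairing of $(f_{P,Q})_*T(A)/2$ with $\Br(S_{P,Q})/\Br_0(S_{P,Q})$ is equivalent to nonvanishing of the composite
\[
\Br(S)[2]/\Br_0(S)[2] \xrightarrow{f^*} \Br(A)[2]/\Br_1(A)[2] \cong \Hom(E_1[2],E_2[2]) \to \Hom(\{\pm 1\},\Z/2\Z).
\]
Since $f^*$ sends $\Br_1(S)$ into $\Br_1(A)$, this composite factors through $\Br(S)/\Br_1(S)$. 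By Remark \ref{rem: SizeofTransBr}, the group $\Br(S)/\Br_1(S)$ is $0$ or $\Z/2\Z$. By Theorem \ref{thm: CalcBrS}, it is nonzero exactly when $\mathcal{A}_{\mu,\eta}$ lies in the image of $\Br(X)[2]/\Br_1(X)[2]$, and then the generator maps to $\mathcal{A}_{\mu,\eta}$. The nontrivial class of $\Br(S)/\Br_1(S)$ is $2$-torsion and is represented by the $2$-torsion quaternion class $(f_\mu,g_\eta)$, so restricting to $[2]$ parts loses nothing. Hence the pairing is nontrivial if and only if two things hold: $\mathcal{A}_{\mu,\eta}$ is in that image, and $\mathcal{A}_{\mu,\eta}$ has nonzero image in $\Hom(\{\pm1\},\Z/2\Z)$.

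The second step is to check that the hypotheses $P\neq P_0$ and $Q\neq Q_0$ guarantee the second condition. By Lemma \ref{lem: SZCanonicalIso}, $\mathcal{A}_{\mu,\eta}$ corresponds to $\varphi_{\mu,\eta}$. This map kills $O_{E_1}$ and $P$, and sends the two other nontrivial points to $Q=(\eta,0)$. The map to $\Hom(\{\pm1\},\Z/2\Z)$ is restriction to the subgroup $\{O_{E_1},P_0\}$, which is the image of $\{\pm1\}$ under the uniformization, followed by the projection $E_2[2]\to\Z/2\Z$ with kernel $\{O_{E_2},Q_0\}$. Since $P_0\neq P$ and $P_0\neq O_{E_1}$, we get $\varphi_{\mu,\eta}(P_0)=Q$. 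Since $Q\neq Q_0$ and $Q\neq O_{E_2}$, the point $Q$ maps to $1\in\Z/2\Z$. So the image of $\mathcal{A}_{\mu,\eta}$ is nonzero, and combining with the first step gives both directions of the lemma.

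The main obstacle is making sure the quotient map $\Br(A)[2]/\Br_1(A)[2]\to\Hom(\{\pm1\},\Z/2\Z)$ extracted from the proof of \cite[Theorem 4.3]{Yamazaki} is literally restriction to $\{\pm1\}\subset E_1[2]$ composed with projection $E_2[2]\to E_2[2]/\{\pm1\}\cong\Z/2\Z$. This has to be checked with respect to the same canonical isomorphism of \cite[Proposition 3.3]{SkorobogatovZarhin} used in Lemma \ref{lem: SZCanonicalIso}, not up to some other normalization. I would first verify this by unwinding Yamazaki's pairing via the Tate uniformization, checking that the right kernel is precisely the set of homomorphisms sending $\{\pm1\}$ into $\{\pm1\}$.
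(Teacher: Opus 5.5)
Your proposal is correct and follows the paper's proof. Both arguments reduce, via Lemma \ref{lem: BMFunctoriality} and Yamazaki's description of $H$, to the composite $\Br(S)[2]/\Br_0(S)[2]\to\Hom(E_1[2],E_2[2])\to\Hom(\{\pm1\},\Z/2\Z)$, identify the image of $\Br(S)$ in $\Hom(E_1[2],E_2[2])$ as either $0$ or $\langle\varphi_{\mu,\eta}\rangle$ using Theorem \ref{thm: CalcBrS} and Lemma \ref{lem: SZCanonicalIso}, and then evaluate at $P_0$. Your remarks about factoring through $\Br(S)/\Br_1(S)$, about the $2$-torsion representative $(f_\mu,g_\eta)$, and about checking Yamazaki's normalization make explicit what the paper leaves implicit.

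The one difference is that the paper reads $P\neq P_0$, $Q\neq Q_0$ as part of the criterion rather than as hypotheses, so its forward direction also derives them. Your computation gives this immediately: $P=P_0$ forces $\varphi_{\mu,\eta}(P_0)=O_{E_2}$, and $Q=Q_0$ means $Q$ dies in $\Z/2\Z$. This is the form of the lemma actually used in Lemma \ref{lem: -1Sq}.
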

	\begin{proof}
		($\impliedby$) Let $P = (\mu, 0)$ and $Q = (\eta, 0)$ and let $S$ be the Enriques quotient corresponding to the involution $\sigma_{P, Q}$. By Theorem \ref{thm: CalcBrS}, $\mathcal{A}_{\mu, \eta}$ is in the image of \[\Br(S)[2]/\Br_0(S)[2] \to \Br(A)[2]/\Br_1(A)[2].\] Therefore, by Lemma \ref{lem: SZCanonicalIso} ,$\varphi_{\mu, \eta}$ is in the image of 
        \[
            \Br(S)[2]/\Br_0(S)[2] \to \Br(A)[2]/\Br_1(A)[2] \to \Hom(E_1[2], E_2[2]). 
        \]
        By Lemma \ref{lem: BMFunctoriality} and the remarks following, it suffices to show that $\varphi_{\mu, \eta}$ maps to the nontrivial element of $\Hom(\mu_2, \Z/2\Z)$. By the definition of $\varphi_{\mu, \eta}$, this is equivalent to that $(\mu, 0)$ and $(\eta, 0)$ do not corresponding to -1 under the p-adic uniformization  for their respective curves. This is true by assumption, so we are done. 
		
		($\implies$) The image of 
        \[
            \Br(S)/\Br_0(S) \to \Hom(E_1[2],E_2[2])
        \]is the subgroup generated by $\varphi_{\mu, \eta}$ by Lemma \ref{lem: SZCanonicalIso} and the image of this subgroup under 
        \[
            \Hom(E_1[2],E_2[2]) \to \Hom(\{\pm 1\}, \Z/2\Z)
        \]
        is nontrivial by assumption. This implies that $(\mu, 0) \neq P_0$ and $(\eta, 0) \ne Q_0$. And by Theorem \ref{thm: CalcBrS} we have that $\mathcal{A}_{\mu, \eta}$ must be in the image of $\Br(X)/\Br_1(X)$. 
	\end{proof}

    \begin{rem}\label{rem: -1TwoTorsionPoint}
        For an elliptic curve with minimal Weierstrass model $E: y^2 = x(x-a)(x-b)$ with split multiplicative reduction, under the p-adic uniformization, $-1$ corresponds to the nontrivial 2-torsion point which is nonsingular under the reduction to the residue field. So we will have one of the following cases, $a\equiv 0$, $b\equiv 0$, or $a\equiv b$ in the residue field. These correspond to $-1$ mapping to $(b,0)$, $(a,0)$ or $(0,0)$ respectively under the p-adic uniformization. 
    \end{rem}

    The key lemma will be used to prove the following main result. Recall that we are in Setting \ref{set: PadicSetting} with the further assumption that both elliptic curves have split multiplicative reduction. 
    \begin{thm}\label{thm: MainResult}
        There exists an Enriques quotient of Lieberman type of $X$ such that $f_*(T(A))/2$ pairs nontrivially with $\Br(S)/\Br_0(S)$ if and only if 
        \[
            \Br(X)[2]/\Br_1(X)[2] \to \Br(A)[2]/\Br_1(A)[2]
        \]
        is an isomorphism. 
    \end{thm}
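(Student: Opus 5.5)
The plan is to reduce Theorem \ref{thm: MainResult} to Lemma \ref{lem: CondsForNontrivPairing} combined with the explicit description of $\Phi$ from Lemma \ref{lem:BrClassesDescendingToKum}. By Lemma \ref{lem: CondsForNontrivPairing}, some Enriques quotient $S_{P,Q}$ has $f_*T(A)/2$ pairing nontrivially with $\Br(S)/\Br_0(S)$ if and only if there exist $P=(\mu,0)\neq P_0$ and $Q=(\eta,0)\neq Q_0$, both nontrivial, such that $\mathcal{A}_{\mu,\eta}$ lies in the image of $\Br(X)[2]/\Br_1(X)[2]$. By Lemma \ref{lem:BrClassesDescendingToKum}, this image is $\ker\Phi$. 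The statement therefore becomes purely combinatorial and arithmetic: among the four classes $\mathcal{A}_{\mu,\eta}$ with $\mu\neq\mu_0$ and $\eta\neq\eta_0$, at least one lies in $\ker\Phi$ if and only if $\Phi=0$. Here $\mu_0,\eta_0$ denote the coordinates of $P_0,Q_0$.

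The direction ($\Leftarrow$) is immediate. If the map is an isomorphism, then $\Phi=0$ by exactness, so every $\mathcal{A}_{\mu,\eta}$ lies in the image. One then picks any $\mu\neq\mu_0$ and $\eta\neq\eta_0$; this is possible since each curve has two nontrivial 2-torsion points other than $P_0$, respectively $Q_0$.

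For ($\Rightarrow$), I would use the split multiplicative reduction together with Remark \ref{rem: -1TwoTorsionPoint}. After relabeling we may assume $\mu_0=b$, so that $a\equiv 0 \pmod{\mathfrak p}$, and similarly $\eta_0=b'$ with $a'\equiv 0$, while $b$ and $b'$ are units. I would also use that for a Tate curve the discriminant has positive valuation and that the nonsingular 2-torsion point $(b,0)$ over the residue field forces specific square classes. The candidate pairs are $(\mu,\eta)\in\{0,a\}\times\{0,a'\}$, so the relevant classes are exactly $\mathcal{A}_{0,0},\mathcal{A}_{a,0},\mathcal{A}_{0,a'},\mathcal{A}_{a,a'}$. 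These are the four generators appearing in Lemma \ref{lem:BrClassesDescendingToKum}, and they form a basis of $\Br(A)[2]/\Br_1(A)[2]$ by Lemma \ref{lem:PresBrA}. The hope is then to show that if one of them lies in $\ker\Phi$, the resulting square conditions propagate to force all four $\Phi$-values to vanish.

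Concretely, suppose $\Phi(\mathcal{A}_{0,0})=0$. Then $ab$, $a'b'$ and $-aa'$ are squares. Since $b,b'$ are units with $p>2$, the squareness of $ab$ and $a'b'$ gives the valuation parities of $a,a'$ and links their unit parts to $b,b'$. I would then use Hensel's lemma on units such as $a-b\equiv -b$ and $a'-b'\equiv -b'$ to check that $aa'$, $a(a-b)$ and $a'(a'-b')$ are also squares. This would make $\Phi(\mathcal{A}_{a,0})$, $\Phi(\mathcal{A}_{0,a'})$ and $\Phi(\mathcal{A}_{a,a'})$ trivial, hence $\Phi=0$. I expect this case analysis to be the main obstacle, because it requires precise information on which of $-1$, $b$, $b'$, $bb'$ are squares mod $\mathfrak p$. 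My first approach would be to extract that from the split condition, namely that the node's tangent slopes, which are $\pm\sqrt{b}$ up to sign, lie in $\F_{\mathfrak p}$; this gives that $b$ and $b'$ (or $-b$, depending on the form) are squares. The other three starting cases should then follow by the same reasoning or by the symmetry $\mu\leftrightarrow$ relabeling.
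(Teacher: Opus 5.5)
Your proposal is correct, and its skeleton is the same as the paper's. Lemma \ref{lem: CondsForNontrivPairing} and Lemma \ref{lem:BrClassesDescendingToKum}, with the normalization $v_{\mathfrak p}(a),v_{\mathfrak p}(a')>0$ and $b,b'$ units, reduce the theorem to showing that if one of the four rows $\Phi(\mathcal{A}_{\mu,\eta})$, $\mu\in\{0,a\}$, $\eta\in\{0,a'\}$, consists of squares, then all sixteen entries are squares. Your $(\Leftarrow)$ direction is identical to the paper's.

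Where you differ is in organizing $(\Rightarrow)$. The paper splits on whether $-1\in k^{\times 2}$:
\begin{itemize}
\item If $-1$ is a square, every row contains $\pm aa'$ and one of $ab$, $a(a-b)$, which forces $a$ and $a'$ to be squares.
\item If $-1$ is not a square, a case analysis on $aa'$ shows that no row can be all squares, and the map is never an isomorphism.
\end{itemize}
You instead propagate directly from a single row. This does close, because each row on its own forces $-1$ to be a square:
\begin{itemize}
\item For $\Phi(\mathcal{A}_{0,0})$, the product $ab\cdot a'b'\cdot(-aa')$ shows that $-bb'$ is a square, while $bb'=(-b)(-b')$ is a square.
\item For $\Phi(\mathcal{A}_{0,a'})$ (and symmetrically $\Phi(\mathcal{A}_{a,0})$), Hensel makes $a'-b'$ a square. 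Hence $a'$, $a$ and $b$ are squares, and comparing $b$ with $-b$ gives $-1$.
\item For $\Phi(\mathcal{A}_{a,a'})$, you get $a,a'$ squares from $a(a-b)$ and $a'(a'-b')$, and then use $-aa'$.
\end{itemize}
Once $-1$ is a square, $b$ and $b'$ are squares, $a-b$ and $a'-b'$ are squares by Hensel, and in each case $a,a'$ are squares, so every entry is a square.

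Your route buys a single uniform argument and dispenses with the separate nonsquare-$(-1)$ lemma. The price is a row-by-row check, which you left as ``the same reasoning'' and should write out. In particular, the case $\mathcal{A}_{a,a'}$ is not obtained from $\mathcal{A}_{0,0}$ by the $E_1\leftrightarrow E_2$ symmetry.

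One correction to your input: at the node of $y^2=x^2(x-\bar b)$ the tangent slopes are $\pm\sqrt{-\bar b}$. So split multiplicative reduction gives that $-b$ and $-b'$ are squares, which is what the paper uses, not $b$ and $b'$.
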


    We prove this theorem by looking at two cases: the first is when $-1$ is a square in $k$ and the second is when $-1$ is not a square. 
	
	\begin{lem}\label{lem: -1Sq}
		Suppose that $-1$ is a square in $k$. Then there exists an Enriques quotient which pairs nontrivially if and only if the map
		\[
		\Br(X)[2]/\Br_1(X)[2] \to \Br(A)[2]/\Br_1(A)[2]
		\]
		is an isomorphism.
	\end{lem}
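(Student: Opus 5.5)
The plan is to reduce both directions to Lemma \ref{lem: CondsForNontrivPairing}. Since $E_1[2]$ and $E_2[2]$ are $k$-rational, the $\mathcal{A}_{\mu,\eta}$ generate $\Br(A)[2]/\Br_1(A)[2]$ by Lemma \ref{lem:PresBrA}. By Lemma \ref{lem:BrClassesDescendingToKum}, the map $\Br(X)[2]/\Br_1(X)[2] \to \Br(A)[2]/\Br_1(A)[2]$ is always injective, and it is an isomorphism exactly when $\Phi = 0$.

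($\Leftarrow$) Assume the map is an isomorphism. Then every $\mathcal{A}_{\mu,\eta}$ lies in its image. Choose $P=(\mu,0)\neq P_0$ and $Q=(\eta,0)\neq Q_0$, which is possible since each $E_i[2]$ has three nontrivial points. Lemma \ref{lem: CondsForNontrivPairing} then shows that $S_{P,Q}$ pairs nontrivially. This direction does not use that $-1$ is a square.

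($\Rightarrow$) Suppose some $S_{P,Q}$ pairs nontrivially. By Lemma \ref{lem: CondsForNontrivPairing}, $\Phi(\mathcal{A}_{\mu,\eta})=0$ for some $(\mu,0)\neq P_0$ and $(\eta,0)\neq Q_0$. I want to deduce that $\Phi$ vanishes on all four generators in Lemma \ref{lem:BrClassesDescendingToKum}.

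First I normalize coordinates. Translating $x \mapsto x-\mu'$ keeps the Legendre shape $y^2=x(x-a)(x-b)$ and permutes the labels $\mathcal{A}_{\mu,\eta}$ compatibly with the translation. So by Remark \ref{rem: -1TwoTorsionPoint} I may assume $P_0=(0,0)$ and $Q_0=(0,0)$, that is, $a\equiv b$ and $a'\equiv b'$ modulo $\mathfrak{p}$. Split multiplicative reduction of a minimal model means the node comes from exactly one pair of roots colliding. Hence $a,b,a',b'$ are units, while $a-b$ and $a'-b'$ have positive valuation.

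The key arithmetic input is the following. Since $p>2$, Hensel's lemma shows a unit of $k$ is a square iff its reduction is a square. Since $ab\equiv a^2$ modulo $\mathfrak{p}$, $ab$ is a square, and likewise $a'b'$ is a square. Also $-1$ is a square by hypothesis.

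By the symmetry $a\leftrightarrow b$, $a'\leftrightarrow b'$ of the normalized setup, I may take $(\mu,\eta)=(a,a')$. The vanishing of $\Phi(\mathcal{A}_{a,a'})=(-aa',\,a'(a'-b'),\,a(a-b),\,1)$ then gives that $aa'$, $a(a-b)$ and $a'(a'-b')$ are squares. Substituting into the formulas gives:
\begin{align*}
\Phi(\mathcal{A}_{0,0}) &= (1,ab,a'b',-aa'),\\
\Phi(\mathcal{A}_{a,0}) &= (a'b',aa',1,a(a-b)),\\
\Phi(\mathcal{A}_{0,a'}) &= (ab,1,aa',a'(a'-b')).
\end{align*}
Every entry is now a square, so all three are trivial. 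Hence $\Phi=0$ and the map is an isomorphism.

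The step I expect to need the most care is the normalization. I must check that the coordinate change $x\mapsto x-\mu'$ transforms the formulas for $\Phi$ in Lemma \ref{lem:BrClassesDescendingToKum} consistently, with the new $a,b$ being differences of the old roots. If that bookkeeping is awkward, I would instead run the same square-class computation directly in each of the three cases $a\equiv 0$, $b\equiv 0$, $a\equiv b$ for each curve. In every case the mechanism is the same: the product of the two non-colliding differences is congruent to a square unit, and $-1$ is a square.
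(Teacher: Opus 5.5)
Your proposal is correct and follows essentially the same route as the paper. The converse direction is immediate from Lemma~\ref{lem: CondsForNontrivPairing}. The forward direction normalizes the Weierstrass models via Remark~\ref{rem: -1TwoTorsionPoint} and then uses Hensel's lemma to show that one all-square row of the $\Phi$-matrix from Lemma~\ref{lem:BrClassesDescendingToKum} forces every row to be all-square.

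The only difference is the normalization. The paper takes $v_{\mathfrak p}(a),v_{\mathfrak p}(a')>0$, so that $P_0=(b,0)$ and $Q_0=(b',0)$, and uses the split hypothesis to make $b$, $b'$, $a-b$, $a'-b'$ squares. You move $P_0$ and $Q_0$ to the origin, so $ab$ and $a'b'$ are automatically squares and the row of $\mathcal{A}_{a,a'}$ dominates the others. This lets you avoid using that $-b$ is a square.
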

    \begin{proof}
		If the map is an isomorphism, then it is clear there exists an Enriques quotient which pairs nontrivially by the above. 
		
		Let us assume that there exists an Enriques quotient which pairs nontrivially. We can choose models of the elliptic curves such that 
        \[
            E_1: y^2 = x(x-a)(x-b)
        \]
        \[
            E_2: v^2 = u(u-a')(u-b')
        \]
        such that $v_\mathfrak{p}(a), v_\mathfrak{p}(a') >0$ and $b,b'$ are units in $k$. Then by Remark \ref{rem: -1TwoTorsionPoint}, we get that $(b,0)$ and $(b',0)$ are the points corresponding to $-1$ under the p-adic uniformization. To prove the statement, by Lemma \ref{lem:BrClassesDescendingToKum}, we want to show that the following matrix has all square entries in $k$:
		\[
		\begin{pmatrix}
			1 & ab & a'b' & -aa' \\
			ab & 1 & aa' & a'(a'-b') \\
			a'b' & aa' & 1 & a(a-b) \\
			-aa' & a'(a'-b') & a(a-b) & 1
		\end{pmatrix}.
		\]
        By assumption, both $(b,0)$ and $(-b,0)$ correspond to $-1$ under the p-adic uniformizations, so by Lemma \ref{lem:BrClassesDescendingToKum}, the statement of Lemma \ref{lem: CondsForNontrivPairing} translates to one of the rows above has all square entries. In particular, $aa'$ is a square. Moreover, since $E_1$ and $E_2$ have split multiplicative reduction, $-b, -b'$ are squares, so $b,b'$ are both squares. Also, note that $(a-b)$ has valuation $0$ by assumption, so it is a unit. And mod $\mathfrak{p}$ $(a-b) \equiv-b$ which is a square. Therefore, since $p> 2$, $a-b$ is a square in $k$. By inspecting the above matrix, every row contains either $ab$ or $a(a-b)$, so in either case, $a$ must be a square since one of the above rows has all square entries. Also, by the same argument as above, $(a'-b')$ is also a square. Finally, since $aa'$ is a square and $a$ is a square, so is $a'$. So we have found that $-1, a, a', b, b', a-b,$ and $a'-b'$ are all squares, so every entry in the matrix above is a square which completes the proof. 
	\end{proof}

    \begin{lem}
        Suppose that $-1$ is not a square in $k$. Then for all Enriques quotients of Lieberman type $S$, $f_*(A)/2$ pairs trivially with $\Br(S)[2]/\Br_0(S)[2]$. 
    \end{lem}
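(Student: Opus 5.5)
By Lemma \ref{lem: CondsForNontrivPairing}, it suffices to show the following. Whenever $P=(\mu,0)\neq P_0$ and $Q=(\eta,0)\neq Q_0$, the class $\mathcal{A}_{\mu,\eta}$ is \emph{not} in the image of $\Br(X)[2]/\Br_1(X)[2]$. By Lemma \ref{lem:BrClassesDescendingToKum}, this amounts to $\Phi(\mathcal{A}_{\mu,\eta})\neq 0$, i.e. some entry of the corresponding row of the $4\times 4$ matrix in the proof of Lemma \ref{lem: -1Sq} is a non-square in $k$.

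\textbf{Normalization.} As in Lemma \ref{lem: -1Sq}, I choose Weierstrass models with $v_\mathfrak{p}(a),v_\mathfrak{p}(a')>0$ and $b,b'$ units. By Remark \ref{rem: -1TwoTorsionPoint}, $P_0=(b,0)$ and $Q_0=(b',0)$. Hence the only admissible choices are $\mu\in\{0,a\}$ and $\eta\in\{0,a'\}$, which gives the four classes $\mathcal{A}_{0,0},\mathcal{A}_{a,0},\mathcal{A}_{0,a'},\mathcal{A}_{a,a'}$.

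\textbf{Square-class information.} I record what split multiplicative reduction gives, as already used in Lemma \ref{lem: -1Sq}.
\begin{enumerate}[(i)]
\item $-b$ and $-b'$ are squares. Since $-1$ is not a square, $b$ and $b'$ are non-square units.
\item $a-b$ is a unit congruent to $-b$ mod $\mathfrak{p}$, so by Hensel ($p>2$) it is a square. Likewise $a'-b'$ is a square.
\item Since $p>2$, the non-square units form a single square class. In particular $bb'$ is a square and $-1\equiv b\equiv b'$ modulo squares.
\end{enumerate}

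\textbf{Case check.} I then go through the four rows of $\Phi$.
\begin{itemize}
\item $\mathcal{A}_{0,0}$, entries $(1,ab,a'b',-aa')$. If $ab$ and $a'b'$ were squares, then $aa'\equiv bb'$ is a square, so $-aa'\equiv -1$ is not a square.
\item $\mathcal{A}_{a,0}$, entries $(a'b',aa',1,a(a-b))$. By (ii), $a(a-b)$ square forces $a$ square. Then $aa'$ square forces $a'$ square. But then $a'b'\equiv b'$ is not a square.
\item $\mathcal{A}_{0,a'}$ follows symmetrically, with the roles of $a,b$ and $a',b'$ exchanged.
\item $\mathcal{A}_{a,a'}$, entries $(-aa',a'(a'-b'),a(a-b),1)$. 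By (ii), squareness of the last two nontrivial entries forces $a$ and $a'$ squares, whence $-aa'\equiv-1$ is a non-square.
\end{itemize}
So in every case $\Phi(\mathcal{A}_{\mu,\eta})\neq0$. By Lemma \ref{lem: CondsForNontrivPairing}, $f_*T(A)/2$ then pairs trivially with $\Br(S)[2]/\Br_0(S)[2]$ for every Lieberman quotient $S$.

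\textbf{Main obstacle.} The only delicate point is the bookkeeping: matching the rows of $\Phi$ with the right $\mathcal{A}_{\mu,\eta}$ after renormalizing the models, and justifying that $-b$ and $-b'$ are squares from split multiplicativity. The latter holds because the tangent slopes at the node are $\pm\sqrt{-b}$ mod $\mathfrak{p}$, which lie in the residue field, and Hensel's lemma then lifts the square root.
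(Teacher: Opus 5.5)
Your proof is correct and follows the paper's argument. It uses the same normalization ($v_\mathfrak{p}(a),v_\mathfrak{p}(a')>0$, $b,b'$ units, so $P_0=(b,0)$ and $Q_0=(b',0)$), the same reduction via Lemma \ref{lem: CondsForNontrivPairing} to showing that no row of the $\Phi$-matrix consists entirely of squares, and the same square-class facts ($b,b'$ lie in the class of $-1$, and $a-b$, $a'-b'$ are squares). The only differences are organizational: you check the four rows $\mathcal{A}_{0,0},\mathcal{A}_{a,0},\mathcal{A}_{0,a'},\mathcal{A}_{a,a'}$ one at a time, while the paper splits into cases by whether $aa'$ is a square, and you get ``$a(a-b)$ square $\Rightarrow$ $a$ square'' directly from Hensel rather than through the paper's valuation/unit-part computation.
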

    This completes the proof of the above main theorem since the map is never an isomorphism as one of $aa'$ and $-aa'$ are not squares.
    \begin{proof}
        For $E_1$ and $E_2$ we can choose models 
        \[
            E_1: y^2 = x(x-a)(x-b)
        \]
        \[
            E_2: v^2 = u(u-a')(u-b')
        \]
        where $v_\mathfrak{p}(a), v_\mathfrak{p}(a') >0$ and $b,b'$ are units in $k$. Then by these assumptions, $(b,0)$ and $(b',0)$ correspond to $-1$ under the p-adic uniformization. So by Lemma \ref{lem: CondsForNontrivPairing} we want to show that none of the rows of 
        \[
		\begin{pmatrix}
			1 & ab & a'b' & -aa' \\
			ab & 1 & aa' & a'(a'-b') \\
			a'b' & aa' & 1 & a(a-b) \\
			-aa' & a'(a'-b') & a(a-b) & 1
		\end{pmatrix}.
		\]
        has all square entries. By the assumption that $E_1$ and $E_2$ have split multiplicative reduction, $-b$ and $-b'$ must be squares mod $\mathfrak{p}$ which implies that $b,b'$ are not squares mod $\mathfrak{p}$ and since they are units and the prime is odd, we get that $b,b'$ are not squares in $\F_\mathfrak{p}$.  
        \\
        \underline{Case 1: $aa'$ is not a square} \\
        If $aa'$ is not a square the middle two rows cannot have all square entries, so we are left with the top row and bottom row. The top row will not work, because if $ab$ and $a'b'$ are squares, then $(ab)(a'b') = aa'(bb')$ is a square times a nonsquare, so it is a nonsquare which is a contradiction. And we can rule out the bottom row because if $a(a-b)$ is a square, then since $v_\mathfrak{p}(a-b) = 0$, we get that $v_\mathfrak{p}(a(a-b)) = v_\mathfrak{p}(a)$ is even. Also, if $a = w\pi^k$ where $w$ is a unit and $\pi$ is the uniformizer, then the unit part of $a(a-b)$ is $w(w\pi^k - b)$. So mod $\mathfrak{p}$ we get $w(-b)$ which must be a square. But $-b$ is a square, so $w$ is as well. This implies that $a$ is a square and by symmetry so is $a'$. Therefore, $aa'$ would be a square which is a contradiction. 
        \\
        \underline{case 2: $aa'$ is a square} \\
        In this case the top and bottom row can be ruled out immediately. Let us show that the second row cannot have all square entries and by symmetry the same argument applies to the third row. Assume $ab$ is a square. Then this implies that $a$ is not a square since $b$ is not. Therefore, $a'$ is not a square. But if $a'(a'-b')$ is a square then by the exact same argument as in the previous part, $a'$ is a square, which is a contradiction. This completes the proof.
    \end{proof}
	
	So by \cite{SkorobogatovZarhin}, the main theorem above becomes a very easy condition to check and allows us to find examples of such Enriques surfaces. See the next section for an explicit example. 

    \begin{cor}\label{cor: FiniteExt}
        There exists a finite extension $L/k$ and an Enriques quotient $S$ of $X$ such that $A_0(S_L)$ is nontrivial.
    \end{cor}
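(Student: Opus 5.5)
The plan is to pass to a finite extension $L/k$ over which every square-class condition in Lemma \ref{lem:BrClassesDescendingToKum} becomes trivial, and then to apply Theorem \ref{thm: MainResult} over $L$. Concretely, I take $L := k(\sqrt{-1}, \sqrt{a}, \sqrt{b}, \sqrt{a'}, \sqrt{b'}, \sqrt{a-b}, \sqrt{a'-b'})$. This is a finite extension of $k$, and it is again a finite extension of $\Q_p$ with $p>2$. Over $L$ every entry of the image of $\Phi$ is a square, so $\Phi_L = 0$. By the exact sequence of Lemma \ref{lem:BrClassesDescendingToKum} over $L$, the map $\Br(X_L)[2]/\Br_1(X_L)[2] \to \Br(A_L)[2]/\Br_1(A_L)[2]$ is then an isomorphism.

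Next I check that all hypotheses of the $p$-adic setting survive the base change to $L$. The curves $E_1$ and $E_2$ still have rational $2$-torsion and remain geometrically non-isogenous, since the geometric conditions are unchanged. Split multiplicative reduction is preserved under finite extensions: a Tate curve over $k$ is still a Tate curve over $L$, because $|q_i|<1$ persists. The Setting also requires a number field inside the base field over which the curves are defined. For this I take $F' := F(\sqrt{-1}, \sqrt{a}, \ldots) \subset L$, which is dense in $L$ and has $L$ as its completion at a prime above $\mathfrak{p}$. With the hypotheses in place, Lemma \ref{lem: -1Sq} (or directly Theorem \ref{thm: MainResult}) over $L$ gives an Enriques quotient $S = S_{P,Q}$ such that $f_*(T(A_L))/2$ pairs nontrivially with $\Br(S_L)/\Br_0(S_L)$. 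Explicitly, I would choose $P \ne P_0$ and $Q \ne Q_0$ as in Lemma \ref{lem: CondsForNontrivPairing}; the corresponding $\mathcal{A}_{\mu,\eta}$ lies in the image because the map is an isomorphism.

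It then follows that there is a class $z \in f_*T(A_L) \subset A_0(S_L)$ with $\langle z, \alpha\rangle_{S_L} \neq 0$ for some $\alpha$. Since the pairing is bilinear and vanishes on $0$, we get $z \neq 0$, so $A_0(S_L) \neq 0$. Note that the involution $\sigma_{P,Q}$, and hence $S$, is already defined over $k$, because the $2$-torsion is rational over $k$. So $S_L$ really is the base change of an Enriques quotient of $X$ over $k$, as the statement requires.

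I expect the main obstacle to be the verification that base change to $L$ keeps us inside the hypotheses of Theorem \ref{thm: MainResult}. In particular, the reduction type and the choice of models with $v(a), v(a')>0$ and $b, b'$ units should be unaffected. The identification of $P_0$ and $Q_0$ with the nonsingular-reducing $2$-torsion points from Remark \ref{rem: -1TwoTorsionPoint} should also carry over unchanged, since it depends only on the reduction. If the ramification of $L/k$ causes trouble with minimality of the models, I would note that Remark \ref{rem: -1TwoTorsionPoint} only uses which $2$-torsion point reduces to a nonsingular point, and that property is insensitive to the extension.
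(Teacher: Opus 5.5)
Your proposal is correct and takes essentially the same route as the paper. You pass to a finite extension $L/k$ over which all the square-class conditions of Lemma \ref{lem:BrClassesDescendingToKum} become trivial, so that $\Br(X_L)[2]/\Br_1(X_L)[2] \to \Br(A_L)[2]/\Br_1(A_L)[2]$ is an isomorphism, and then apply Theorem \ref{thm: MainResult} over $L$; unlike the paper's two-line argument, you write $L$ down explicitly and check that the hypotheses of Setting \ref{set: PadicSetting} (split multiplicative reduction, the number field $F'$, and $S_{P,Q}$ being defined over $k$) survive the base change.
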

    \begin{proof}
        By Lemma \ref{lem:BrClassesDescendingToKum}, there exists a finite extension $L/k$ such that after basechanging $X$ and $A$ to $L$, the map on transcendental Brauer groups becomes an isomorphism. Then by Theorem \ref{thm: MainResult}, there exists an Enriques quotient $S_L$ with the property that the induced map $A_0(A_L)\to A_0(S_L)$ is nontrivial, so $A_0(S_L)$ is nontrivial. 
    \end{proof}
	
	\section{Example and Application to Weak Approximation}\label{sec: Example}
	In this section, we give an explicit example of an Enriques surface $S$ such that $A_0(S) \neq 0$. This example is over the p-adic field $\mathbb{Q}_{13}$.
	
	Consider the elliptic curves
	\[
	E_1: y^2 + xy + y = x^3 + x^2 - 565x - 5353 \text{ (LMFDB 390.f3)} 
	\]
	\[
	E_2: y^2 + xy= x^3 -474x +2619 \text{ (LMFDB 429.b3)}.
	\]
    We can write these elliptic curves as 
    \[
        E_1: y^2 = x(x-9)(x-169)
    \]
    \[
        E_2: v^2 = u(u-121)(u-169).
    \]
    Then we can see that both curves have split multiplicative reduction at the prime $p = 13$. Moreover, they are non-isogenous over $\overline{\mathbb{Q}}$ since $E_1$ has split multiplicative reduction at $p=5$ but $E_2$ has good reduction at $p = 5$. We consider these as elliptic curves over $\Q_{13}$. Let $X$ denote the Kummer surface associated with the product of these elliptic curves. Then we claim that $\Br(X)[2]/\Br_1(X)[2] \to \Br(A)[2]/\Br_1(A)[2]$ is an isomorphism. Indeed by Lemma \ref{lem:BrClassesDescendingToKum} it suffices to show that 
    \[ \begin{pmatrix} 
        1 & ab & a'b' & -aa' \\ ab & 1 & aa' & a'(a'-b') \\ a'b' & aa' & 1 & a(a-b) \\ -aa' & a'(a'-b') & a(a-b) & 1 \end{pmatrix}. 
    \]
    has all square entries in $\Q_{13}$. We have $a = 9$, $b=169$, $a' = 121$ and $b'= 169$. So it is easy to compute the above matrix as
    \[
    \begin{pmatrix}
        1 & 39^2 & 143^2 & -33^2 \\
        39^2 & 1 & 33^2 & -3 \cdot 44^2 \\
        143^2 & 33^2 & 1 & -10 \cdot 12^2 \\
        -33^2 & -3 \cdot 44^2 & -10 \cdot 12^2 & 1
    \end{pmatrix}.
    \]
    And since $-3$ and $-10$ are squares in $\Q_{13}$, it is clear that every entry of the matrix is a square, so the above map is an isomorphism. Then by Theorem \ref{thm: MainResult}, there exists an Enriques quotient $S$ of $X$ such that $A_0(S)$ is nontrivial. The Enriques quotient is any of the ones coming from the involutions $\sigma_{P,Q}$ where $P\neq (9,0)$ and $Q \neq (121,0)$.

    \subsection{Remark on Weak Approximation}    
    Our results also provide examples of a transcendental Brauer-Manin obstruction to Weak Approximation on certain Enriques surfaces of Lieberman type $S$. We note that examples of the failure of Weak Approximation have been shown by Harrari and Skorobogatov in \cite{HarariSkorobogatov} and examples using transcendental obstructions have been shown by Creutz and Viray in \cite{CreutzViray}. 
    
    Keep the assumptions of Setting \ref{set: PadicSetting}. Let $v$ be the place corresponding to the prime $\mathfrak{p}$ so that $k = F_v$. Assume that 
    \[
        \Br(X_k)[2]/\Br_1(X_k)[2] \to \Br(A_k)[2]/\Br_1(A_k)[2]
    \]
    is an isomorphism over the local field $k$. Further assume that $\Br(S)/\Br_1(S)$ is nontrivial over the number field $F$. For example, if we take the elliptic curves in the example of the above section over the number field $F = \Q(i)$ with $v$ a place corresponding to a prime over 13 such as $2+3i$, then $k := F_v = \Q_{13}$ and the above conditions are satisfied (by Theorem \ref{thm: CalcBrS} and Lemma \ref{lem:BrClassesDescendingToKum}) for the Enriques surface $S_{P,Q}$ where $P = (0,0) \in E_1(F)$ and $Q = (0,0) \in E_2(F)$. 
    
    For a global Brauer class $\alpha \in \Br(S)$, define the map 
    \[
        \text{ev}_{\alpha}: S(k) \to \Q/\Z
    \]
    by $\text{ev}_{\alpha}(x) := \langle [x], {\alpha}\rangle $. That is taking the image of pairing the zero-cycle associated to the rational point $x$ with the Brauer class $\alpha$. 
    
    \underline{\textbf{Claim}:} There exists a Brauer class $\alpha \in \Br(S)$ such that the map $\text{ev}_\alpha$ is non-constant.
        
    Given the claim we show that Weak Approximation is obstructed for $S$. That is we will show that
    \[
        S(\A_F)^\alpha \subsetneq S(\A_F)
    \]
    where $S(\A_F)^\alpha$ is the set of adelic points which map to 0 after pairing with $\alpha$. This is sufficient as $S(F) \subset S(\A_F)^\alpha$ and the latter set is closed. 
    
    Let $x \in S(F)$ and take $(x_w)_w \in S(\A_F)$ to be the corresponding adelic point. Since $\text{ev}_\alpha$ is nonconstant on $S(k)$ by the Claim, there exists $s_v \in S(k)$ such that $\langle s_v, \alpha\rangle \neq \langle x_v, \alpha\rangle$. Now define the element $(t_w)_w$ by $t_v = s_v$ and $t_w = x_w$ for all $w\neq v$. Then $(t_w)_w \in S(\A_F)$ but by construction it is not an element of $S(\A_F)^\alpha$. Therefore, $\overline{S(F)} \subsetneq S(\A_F)$. Thus it suffices to prove the claim

    \underline{\textbf{Proof of Claim:}}
    Suppose for contradiction that $\text{ev}_\alpha$ is constant for all $\alpha$ in $\Br(S)$. Then the map 
    \[
        \langle -, \alpha \rangle : A_0(S_k)/2 \to \Q/\Z
    \]
    would be trivial for any element of the form $\sum_i [x_i] - [y_i]$ where $x_i, y_i \in S_k(k)$ are rational points of $S_k$. By Remark 4.2.5 of \cite{RaskindSpiess}, $T(A_k)/2$ is generated by elements of the form 
    \[
        (P, Q) - (P,O_2) - (O_1, Q) + (O_1, O_2)
    \]
    for $P \in E_1(k)$ and $Q \in E_2(k)$. Therefore, the map 
    \[
        \langle -, \alpha\rangle : f_*(T(A_k))/2 \to \Q/\Z
    \]
    is trivial by the above assumption since every cycle in $f_*(T(A_k))/2$ can be written as a sum of the difference of rational points. But by Theorem \ref{thm: MainResult}, there exists $\beta \in \Br(S_k)[2]/\Br_0(S_k)[2]$ such that the map 
    \[
        \langle -, \beta \rangle : f_*(T(A_k))/2 \to \Q/\Z
    \]
    is nontrivial. So to arrive at a contradiction it only suffices to show that there exists $\alpha \in \Br(S)$ which maps to a $\beta \in \Br(S_k)[2]/\Br_0(S_k)[2]$ which pairs nontrivially with $f_*(T(A_k))/2$. But any $\beta$ which is nontrivial in $\Br(S_k)/\Br_1(S_k)$ has this property and $\Br(S_k)/\Br_1(S_k) \cong \Br(S)/\Br_1(S) \cong \Z/2\Z$. So any $\alpha \in \Br(S)$ which is nontrivial in $\Br(S)/\Br_1(S)$ has the desired property. This completes the proof of the claim. 
    
	\bibliographystyle{alpha}
	\bibliography{biblio}
	
\end{document}